\setlist[enumerate,1]{leftmargin=1.5cm}
\setlist[enumerate,2]{label=(\alph*),itemsep=1em}
\newcommand{\cE}{\mathcal{E}}
\newcommand{\cU}{\mathcal{U}}
\newcommand{\cV}{\mathcal{V}}
\newcommand{\cC}{\mathcal{C}}
\newcommand{\cO}{\mathcal{O}}
\newcommand{\set}[1]{\left\{#1\right\}}
\renewcommand{\max}[1]{\text{max}\set{#1}}
\renewcommand{\min}[1]{\text{min}\set{#1}}
\newcommand{\rect}{\mathrm{Rect}}
\newcommand{\fold}{\mathit{F}}
\newcommand{\symmdiff}{\; \triangle \;}
\newcommand{\compl}[1]{\overline{#1}}
\newcommand{\SYT}{\mathrm{SYT}}
\newcommand{\fhalfN}{\mathchoice
{\lfloor\tfrac{N}{2}\rfloor}%
{\lfloor\tfrac{N}{2}\rfloor}%
{\lfloor\frac{N}{2}\rfloor}%
{\lfloor\frac{N}{2}\rfloor}}
\newcommand{\chalfN}{\mathchoice
{\lceil\tfrac{N}{2}\rceil}%
{\lceil\tfrac{N}{2}\rceil}%
{\lceil\frac{N}{2}\rceil}%
{\lceil\frac{N}{2}\rceil}}
\newcommand{\vc}[1]{\ensuremath{\vcenter{\hbox{#1}}}}
\theoremstyle{theorem}
\newtheorem{thm}{Theorem}[section]
\newtheorem{prop}[thm]{Proposition}
\newtheorem{lemma}[thm]{Lemma}
\theoremstyle{definition}
\newtheorem{algo}[thm]{Algorithm}
\newtheorem{defn}[thm]{Definition}
\newtheorem{ex}[thm]{Example}
\newtheorem*{ack}{Acknowledgements}
\theoremstyle{remark}
\newtheorem{remark}[thm]{Remark}
\newcommand{\webdist}{\mathrm{webdist}}
\newcommand{\arcdist}{\mathrm{arcdist}}
\newcommand{\arcset}{\mathrm{Arcs}}
\newcommand{\cosep}{\mathrm{CS}}
\newcommand{\domtab}{D}
\newcommand{\rdomtab}{\widetilde{D}}
\newcommand{\symtab}{T}
\newcommand{\compresstab}{C}
\newcommand{\web}{\mathcal{W}}
\newcommand{\webcross}{\mathcal{W}^\times}
\newcommand{\webfromdomtab}[1]{\web_{\textnormal{\texttt{Unfold}}(#1)}}
\newcommand{\mdiag}{\mathcal{M}}
\newcommand{\mdiagcross}{\mathcal{M}^{\times}}
\definecolor{notecolor}{RGB}{255, 5, 226}
\newcommand{\note}[1]{{\color{notecolor}#1}}
\title{Folding rotationally symmetric 
tableaux
via webs}
\author{Kevin Purbhoo\thanks{Research supported by NSERC Discovery Grants 
RGPIN-355462 and RGPIN-04741-2018.}~ and Shelley Wu\thanks{Research supported by NSERC Undergraduate Student Research Award.}}
\begin{document}
\maketitle

\begin{abstract}
Rectangular standard Young tableaux with $2$ or $3$ rows are in bijection 
with $U_q(\mathfrak{sl}_2)$-webs and $U_q(\mathfrak{sl}_3)$-webs respectively.
When $\web$ is a web with a reflection symmetry, the corresponding
tableau $T_\web$ has a rotational symmetry.  Folding $T_\web$
transforms it into a domino tableau $D_\web$. 
We study the relationships between these correspondences.  
For $2$-row tableaux, folding a rotationally symmetric tableau corresponds
to ``literally folding'' the web along its axis of symmetry.
For $3$-row tableaux, we give simple algorithms, which 
provide direct bijective maps 
between symmetrical webs and domino tableaux (in both directions).  
These details of these
algorithms reflect the intuitive idea that 
$D_\web$ corresponds to ``$\web$ modulo symmetry''.
\end{abstract}




\section{Introduction} 

Folding is a bijection on standard Young tableaux, closely related to 
the fundamental operations, promotion and evacuation,
introduced by Sch\"utzenberger \cite{sch}.
It can be thought of as the third operation in this sequence.
Promotion ``cycles'' the entries of a tableau in a particular
way, and often arises in a context where it is performed repeatedly.
Evacuation can be viewed as repeatedly performing promotion, while fixing
the largest entry after each iteration.  Folding is defined similarly
to evacuation, except that we fix the largest \emph{two} entries after each
iteration of promotion.  One could continue the sequence, fixing the
largest three entries after each iteration, and so on, but beyond folding,
these operations do not appear to have any notable properties.
Promotion and evacuation and have been extensively studied 
(see survey article \cite{rstanley}), and folding has
its own unique and peculiar properties that make it an interesting
combinatorial operation.

The importance of the folding operation comes from the 
fact that it restricts to a bijection between certain special classes 
of tableaux.  The best known example is that folding a
self-evacuating tableau yields a domino tableau; this defines a bijection
between these two classes of tableaux \cite{vL}.
Folding also arises in shifted tableau theory.  It gives the bijection 
behind Shor's conjecture (proved by Haiman \cite{haiman-mixed}), 
which expresses 
a Schur $P$-function as a linear combination of ordinary Schur functions.
It also defines a bijection between unshifted and shifted tableaux of 
staircase shape \cite{purbhoo-lg}.  In each of these cases, there is a 
geometric explanation for these bijections,
in which folding can be thought of as simple change of coordinates
which transforms one geometric symmetry into another geometric symmetry
\cite{purbhoo-ribbon, purbhoo-lg}.

Rectangular tableaux with $2$ or $3$ rows
are of special interest because they are in
bijection with planar diagrams called webs.  
Webs were introduced by Kuperberg in \cite{kuperberg} to construct invariant
vectors in representations of the quantum groups $U_q(\mathfrak{sl}_2)$ and
$U_q(\mathfrak{sl}_3)$.
These associated webs are called $U_q(\mathfrak{sl}_2)$-webs and 
$U_q(\mathfrak{sl}_3)$-webs
respectively (or $A_1$-webs and $A_2$-webs after their Lie-types);
here, we will refer to them simply as $2$-webs and $3$-webs.
It is well-known how promotion and evacuation behave under the bijection
with webs: promotion corresponds to cyclically rotating the web \cite{PPR},
and evacuation corresponds to reflecting the web \cite{PaPe}.
The purpose of this paper is to study the properties of folding under 
the bijection between tableaux and webs.

A $2$-web is a non-crossing arc diagram, with vertices labelled
by the entries of the corresponding tableau.  The bijection with rectangular
$2$-row 
tableau is well-known and straightforward: left ends of arcs are labelled by
entries in the first row of the tableau, and right ends are labelled by
entries in the second row.  Here is an example of
a $2 \times n$ standard Young tableau $\symtab$ and the corresponding $2$-web
$\web_\symtab$.
\[ 
T = \begin{young}[c]
1 & 3 & 4& 7 \\ 2 & 5 & 6 & 8 
\end{young} \qquad \qquad 
\vc{\includegraphics[scale=0.225]{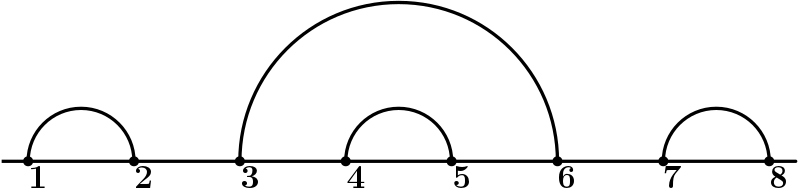}}
\]  
In this case $T$ is self-evacuating.  For rectangular tableaux this
simply means that the tableau is \emph{rotationally symmetric}:
the sum of the entries in $180^\circ$-rotationally opposite positions is 
constant (e.g. for $T$ above, $1$ is opposite $8$, $2$ is opposite $7$, 
etc.).  This implies that the corresponding $2$-web has vertical axis of 
symmetry.

Folding a rotationally symmetric $2 \times n$ 
tableau translates
into a simple-to-describe 
operation on the corresponding symmetrical web.  It can 
be thought of as literally folding the web in half.  
More accurately, this means that each arc or 
half arc on the right side of web diagram is doubled in
a non-intersecting way.
The folded web is obtained by smoothing out this folded picture.
For example, if do this
to $\web_T$ above, we get
  \begin{align*}
	\vc{\includegraphics[scale=0.225]{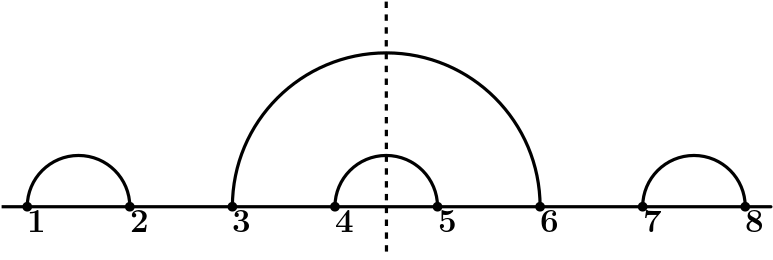}}
 \qquad & \longrightarrow \qquad 
	\vc{\includegraphics[scale=0.225]{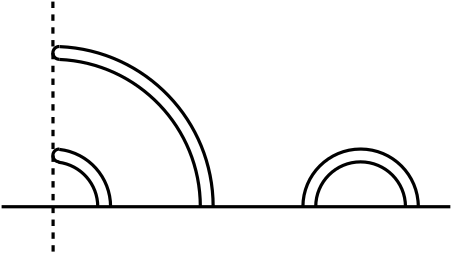}} \\
	& \longrightarrow \qquad
	\vc{\includegraphics[scale=0.225]{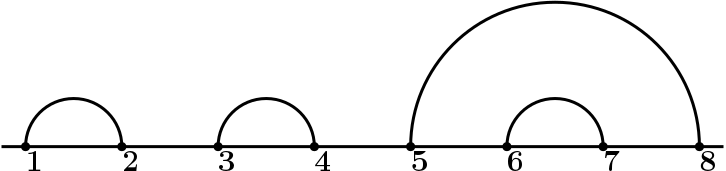}}
\,.
  \end{align*}
The resulting diagram is the $2$-web of  
\[ 
 \fold(T) = \begin{young}[c] 
1\ynobottom & 3\ynobottom & ]=5 & =]6 \\ ]=]2\ynotop&4\ynotop&]=7&8 
\end{young}
\,,
\]
the tableau we obtain by folding $T$ (see Example~\ref{ex:folding}).  
Note that
$\fold(T)$ is indeed a domino tableau: the entries $2j-1$, $2j$ are 
adjacent for all $j= 1,2,3,4$.

 \begin{thm} \label{thm:2-by-n}
 If $T$ is a rotationally symmetric $2 \times n$ standard Young tableau, then the $2$-web of $\fold(T)$ is obtained by  folding the $2$-web  of $T$ to the right with respect to its axis of symmetry.  
 \end{thm}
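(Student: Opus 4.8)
The plan is to recast the statement as an identity between two explicit operations on noncrossing perfect matchings of $[2n]$ and prove it by induction on $n$.

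First I would set up the dictionary: a $2\times n$ standard Young tableau $T$ is the same thing as a noncrossing perfect matching $M_T$ of $[2n]$ in which $i$ is the left endpoint of its arc exactly when $i$ sits in the first row, $\web_T$ is $M_T$ drawn on a line, rotational symmetry of $T$ means $\sigma$-invariance of $M_T$ for the reflection $\sigma\colon i\mapsto 2n+1-i$, and (by the noncrossing condition) the arcs of $M_T$ that cross the axis necessarily form a nested family $\{k,2n+1-k\}$. The next step is to make the informally described ``folding to the right'' into a precise map $M\mapsto M^{\flat}$ on $\sigma$-invariant matchings: reflect the left half onto the right, lay a parallel copy alongside each arc and each axis-crossing half-arc on the right, let the reflected left endpoints become $n$ new vertices interleaved among $n+1,\dots,2n$, then smooth and relabel from left to right. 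From this description one checks directly that $M^{\flat}$ always has the structure of a $2\times n$ domino-tableau web; this is the concrete incarnation of van Leeuwen's theorem \cite{vL} in our situation, and it also identifies the domino tableau $D$ with $M_D=M^{\flat}$.

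For the inductive step I would use the recursive form of folding: $\fold(T)$ puts $2n-1$ and $2n$ in the two cells where a single step of promotion of $T$ puts them, and elsewhere equals the fold of that promoted tableau restricted to the entries $\le 2n-2$. Using ``promotion $=$ rotation of the web'' \cite{PPR} to see what a single promotion does near vertices $1$ and $2n$, one finds that $\{2n-1,2n\}$ is a vertical domino in column $n$ precisely when the arc of vertex $1$ in $M_T$ is the full axis-crossing arc $\{1,2n\}$, and a horizontal domino otherwise; correspondingly $M^{\flat}$ ends in a short block of exactly the shape predicted by the first step. Peeling this block off $M^{\flat}$, and performing the parallel simplification on $M_T$ and on the promoted tableau, produces a strictly smaller rotationally symmetric instance; matching the two peelings gives the induction. (An equally good route replaces the recursion by a direct closed description of $\fold$ on rotationally symmetric $2\times n$ tableaux and compares it block by block with $M\mapsto M^{\flat}$.)

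The main obstacle is the bookkeeping in this inductive step: the recursion for folding does not keep one inside rectangular shapes --- after peeling off the top domino the shape may become $2\times n$ minus two boundary cells --- so one must either run the whole argument in a framework broad enough to cover these skew shapes, or, more cleanly, use the reflection symmetry of $M_T$ to track only its left half, which stays a genuine rectangle, processing promotions in mirror pairs. Pinning down exactly where $2n-1$ and $2n$ go after a promotion of a rotationally symmetric $2\times n$ tableau, and checking that this is precisely the recursion built into $M\mapsto M^{\flat}$, is the delicate point; once it is in hand the theorem follows.
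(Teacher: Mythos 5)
Your setup and the first step of your recursion are sound: for a rotationally symmetric $T$, one promotion does place $N-1$ next to $N$ (vertical in the last column exactly when $\{1,N\}$ is an arc of $\web_T$, horizontal otherwise), and $\fold(T)_{\leq N-2} = \fold\big(P(T)_{\leq N-2}\big)$. The genuine gap is the inductive step in the horizontal case, which you flag but do not resolve. There $P(T)_{\leq N-2}$ has shape $(n,n-2)$: it is not a rectangle, so the web correspondence, the notion of rotational symmetry, and your induction hypothesis simply do not apply to it, and the facts you lean on (promotion $=$ rotation, $P^N=\mathrm{id}$, evacuation $=$ rotate-and-complement) are special to rectangles. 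Your two proposed remedies are only gestures: a ``framework broad enough to cover these skew shapes'' would require extending the web bijection and the promotion--rotation dictionary to shapes like $(n,n-2k)$, which is not available and not obviously true; and ``tracking only the left half, processing promotions in mirror pairs'' is not a procedure from which one can read off where the later dominoes of $\fold(T)$ land. Peeling one domino at a time forces you through these non-rectangular shapes for as long as the arc at vertex $1$ has not been fully consumed, and that is exactly the delicate point you leave open.

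The paper closes this gap with a different decomposition that never leaves the rectangular, symmetric world. It conditions on the arc $\{1,2\ell\}$ at vertex $1$: if $\ell=n$ one gets your vertical-domino case; otherwise $T=[\,U\mid V\mid U^{\circ}\,]$ with $U$ the first $\ell$ columns (entries $1,\dots,2\ell$), $U^{\circ}$ its rotational complement, and $V$ rotationally symmetric, and it peels off the whole $2\times 2\ell$ block at once rather than one domino at a time. The technical heart is Lemma~\ref{lem:row-intact} (via Lemma~\ref{promotion-rectification}): $P^{2\ell}(T)+2\ell=[\,V\mid U^{\circ}\mid U+N\,]$ and the rows of $U^{\circ}$ stay intact under the first $2\ell$ promotions. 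Combined with Lemmas~\ref{lem:promotion-folding} and~\ref{lem:promotion-folding-symmetric}, this pins down the last $2\ell$ columns of $\fold(T)$ explicitly (the block $S$, whose arcs are precisely the crossed pairs $\{\compl{2a_i},\compl{2b_{\pi(i)}}+1\}$, $\{\compl{2a_i}+1,\compl{2b_{\pi(i)}}\}$), and the induction is applied to the genuine symmetric rectangle $V-2\ell$. Your parenthetical alternative — a ``direct closed description of $\fold$'' on symmetric $2\times n$ tableaux — is in effect what this lemma provides, but producing and proving that description is the substance of the proof, and your proposal does not supply it.
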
 

A $3$-web is a kind of bipartite planar graph in which the external vertices 
have degree $1$, 
and the internal vertices have degree $3$.
The bijection between $3 \times n$ tableaux and $3$-webs is 
somewhat more complicated than the $2 \times n$ case, 
and the first algorithm for computing it was given in \cite{kuperberg}.
Tymoczko \cite{tymoczko} subsequently found 
a simpler algorithm for producing this bijection.
It begins by constructing two non-crossing arc diagrams --- one
for the first pair of rows, and one for the second pair. The two
diagrams are then superimposed to form the \emph{m-diagram} of the tableau,
which does have crossings.  The
crossings in the m-diagram are then resolved in a canonical way to 
produce the web.  
We recall the precise definition of a $3$-web and the statement of
this algorithm
in Section \ref{sec:webs}.

\begin{figure}
\centering
$T= \begin{young}[c] 
1 & 2 & 3 & 4 & 8 & 10 \\
5& 6 & 7 & 12 & 13& 14 \\
9 & 11 & 15 & 16& 17 & 18\end{young}$
\qquad
   $\vc{\includegraphics[scale = 0.32]{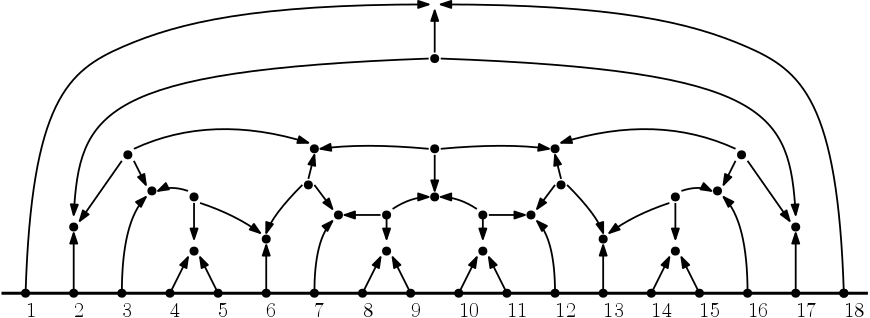}}$
\caption{A $3 \times n$ tableau $T$ and the corresponding $3$-web
$\web_T$.}
\label{fig:runningexample}
\end{figure}
Figure \ref{fig:runningexample} shows an example of a $3 \times n$
rectangular tableau and the corresponding $3$-web.
The tableau here is again rotationally symmetrical, which implies
that the corresponding $3$-web has vertical axis of symmetry.  

In light of Theorem~\ref{thm:2-by-n}, one might guess that 
for a rotationally symmetric $3 \times n$ tableau $T$,
there is some sense in
which the corresponding $3$-web $\web_T$ 
can be folded in half to produce the web of the folded tableau
$\fold(T)$.  Unfortunately,
this turns out to be incorrect.  
There does not appear to be any canonical way to geometrically fold a
$3$-web in half, and deform it so that the resulting diagram is a $3$-web,
and in general the web of $\fold(T)$ cannot be drawn in such a way that it
looks like a doubled version of the right half of $\web_T$.
Nevertheless, we show that there is still a strong a sense in which 
$\fold(T)$ corresponds to $\web_T$ modulo vertical symmetry.

Given a web $\web$, there is a simple algorithm for computing the 
corresponding tableau $T_\web$, which begins by computing the distance from 
certain faces to the outer face.
Our first main result 
is a modification of this algorithm.
Given a symmetrical web $\web$, instead of computing the distance
to the outer face, our algorithm begins by computing the distance
from a face to its mirror image.  These distances are then used to
produce a domino tableau $\domtab_\web$.

\begin{thm} \label{thm:fw1}
%
If $\web$ is a symmetrical $3$-web, then 
$\domtab_\web  = \fold(\symtab_\web)$.
\end{thm}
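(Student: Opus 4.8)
The plan is to route the argument through the m-diagram. Recall from Section~\ref{sec:webs} (Tymoczko's algorithm) that $\mdiag_{\symtab_\web}$ is the overlay of two non-crossing arc diagrams, one ($A^{\mathrm{t}}$) built from rows $1,2$ of $\symtab_\web$ and one ($A^{\mathrm{b}}$) built from rows $2,3$, and that $\web$ is recovered from $\mdiag_{\symtab_\web}$ by resolving its crossings in the canonical way; the algorithm recovering $\symtab_\web$ from $\web$ in turn begins by computing distances from certain faces to the outer face. Rotational symmetry of $\symtab_\web$ is equivalent to a reflection symmetry of $\mdiag_{\symtab_\web}$ that interchanges $A^{\mathrm{t}}$ and $A^{\mathrm{b}}$. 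I would first build a ``folded m-diagram'' $\mdiag^{\fold}$ by applying the literal folding of Theorem~\ref{thm:2-by-n} to $A^{\mathrm{t}}$ and to $A^{\mathrm{b}}$ separately and re-superimposing the two resulting domino arc diagrams. The first key step is then to check that these two folds are compatible along the row shared by $A^{\mathrm{t}}$ and $A^{\mathrm{b}}$ (the middle row of $\symtab_\web$), so that $\mdiag^{\fold}$ is well defined, and that it is legitimately the m-diagram of $\fold(\symtab_\web)$ in the domino-decorated sense of \cite{vL}; when $3n$ is odd, the central box, forced by symmetry to contain $(3n+1)/2$, is handled as a fixed monomino.

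The second key step is to show that resolving the crossings of $\mdiag^{\fold}$ produces the web $\web_{\fold(\symtab_\web)}$ and that the modified, distance-to-mirror-image algorithm applied to $\web$ reads off the same tableau as the ordinary algorithm applied to $\web_{\fold(\symtab_\web)}$, namely $\fold(\symtab_\web)=\domtab_\web$. For this I expect to need a correspondence between the faces of $\web$ together with their mirror images and the faces of $\web_{\fold(\symtab_\web)}$, under which $d_\web(F,F')$ is governed by the distance from $F$ to the faces meeting the axis of symmetry (a folded geodesic crosses the axis once), and under which this distance matches what the ordinary algorithm records on $\web_{\fold(\symtab_\web)}$. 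The content of this step is that folding the m-diagram commutes appropriately with crossing resolution; I would prove it by examining how each canonical resolution move transforms under the fold, which reduces to a finite check of local configurations, together with an induction on $n$ to propagate these local changes across the diagram.

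The step I expect to be the main obstacle is exactly this interaction between crossing resolution and folding. In the $2$-web case folding is literal and essentially geometric, which is why Theorem~\ref{thm:2-by-n} is clean, but a $3$-web has genuine crossings and admits no literal geometric fold; so the substance of the proof is (a) constructing $\mdiag^{\fold}$ and proving it computes $\fold$, which requires reconciling the two independent arc-diagram folds along the shared middle row and controlling the doubled half-arcs there, and (b) proving that the distance-to-mirror-image function of $\web$ agrees, face by face, with the distance-to-outer-face function of $\web_{\fold(\symtab_\web)}$ through the resolution step. Point (b) carries most of the casework, since a crossing in $\mdiag^{\fold}$ can be resolved in two ways and one must verify that the canonical choice is consistent on both sides of the axis simultaneously. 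I expect the faces meeting the axis, and the central monomino when $3n$ is odd, to require separate, careful treatment, since these are where the ``$\web$ modulo symmetry'' picture is most delicate.
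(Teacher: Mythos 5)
There is a genuine gap, and in fact the overall route runs against what is true. Your step (a) — that one can ``literally fold'' the two arc families of $\mdiag_{\symtab_\web}$ and obtain the m-diagram of $\fold(\symtab_\web)$ — is neither well defined nor correct. The fold of Theorem~\ref{thm:2-by-n} is only defined for a \emph{symmetric, non-crossing} arc diagram; neither $A^{\mathrm{t}}$ nor $A^{\mathrm{b}}$ is symmetric (the reflection swaps them), and in the full m-diagram the mirror pairs that straddle the axis cross each other, a configuration the $2$-row fold rule never has to handle. Worse, under the only natural reading the claim fails on the paper's running example (Figure~\ref{fig:runningexample}): the mirror pair $\{4,5\}$, $\{14,15\}$ of $\mdiag_{\symtab_\web}$ would fold to $\{10,11\}$ and $\{9,12\}$, but the m-diagram of $\fold(\symtab_\web)$ has first arcs $\{2,3\},\{1,4\},\{5,6\},\{7,8\},\{10,15\},\{9,17\}$ and second arcs $\{8,11\},\{6,12\},\{4,13\},\{3,14\},\{15,16\},\{17,18\}$, which contain neither. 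Your step (b) compounds this: a face-by-face correspondence between $\web$ and $\web_{\fold(\symtab_\web)}$ under which distance-to-mirror matches distance-to-outer-face is exactly the kind of geometric ``fold'' of a $3$-web that the paper argues does not exist (no canonical halving, nothing like a two-to-one correspondence of arcs). Moreover, since Algorithm~\ref{alg:tw1} applied to $\web_{\fold(\symtab_\web)}$ returns $\fold(\symtab_\web)$ by definition, your step (b) is not a reduction at all — it restates the theorem — and ``a finite check of local configurations plus induction on $n$'' is precisely where all the difficulty is hidden, with no mechanism supplied.

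The actual proof avoids any geometric folding of the $3$-web. The key point is Lemma~\ref{lem:promotion-folding} (and its symmetric companion, Lemma~\ref{lem:promotion-folding-symmetric}): the entry $\compl{2j}$ occupies the same box in $\fold(\symtab_\web)$ as in $P^{j}(\symtab_\web)$, and $\compl{2j}+1$ the same box as in $P^{j-1}(\symtab_\web)$. Since promotion of the tableau is cyclic rotation of the web, the row of $\compl{2j}$ can be read off by running Algorithm~\ref{alg:tw1} on the rotated web $P^{j}(\web)$, whose distance-to-outer-face data translates, via the identification of boundary faces and the symmetry of $\web$, into the quantities $h_j=\webdist(B_j,B_{N-j})$ and the auxiliary $g_j=\webdist(B_j,B_{N-j+1})$. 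One gets $u_{\compl{2j}}=\Phi(h_j-g_j)$ and $u_{\compl{2j}+1}=\Phi(g_j-h_{j-1})$, and since these two entries form a domino, the sum telescopes to $h_j-h_{j-1}$, giving exactly $\Lambda(h_j-h_{j-1})$ as in Algorithm~\ref{alg:fw1}. If you want to salvage your plan, this is the pivot you are missing: replace ``fold the m-diagram'' by ``rotate the web and track where folding leaves the two largest unfixed entries.''
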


Our second main result is an algorithm which provides
the inverse bijection.
This is a modification of Tymoczko's algorithm.
Starting from a domino tableau $D$, we
construct a web $\webcross_\domtab$, called the \emph{crossed web}
of $D$.

Roughly, the crossed web is constructed as follows.  First, we form a
tableau $\compresstab$, called the \emph{compression} of $\domtab$,
which looks superficially similar 
to $\domtab$ but is only is half width.
We then form \emph{crossed m-diagram} 
of $\domtab$, denoted $\mdiagcross_\domtab$, 
which is obtained by ``doubling and unfolding'' 
the m-diagram of $\compresstab$. Each arc of $\compresstab$
becomes to two arcs which are mirror images of each other;
these are either two mirror-image copies of the original arc, 
or a pair of arcs that cross.
In general, the crossed m-diagram is not the m-diagram of any tableau.
Nevertheless, it is similar enough that we can take its canonical resolution.
The crossed web of $D$ is the canonical resolution of 
the crossed m-diagram of $D$.

\begin{thm}\label{thm:fw2}
If $\domtab = F(T)$ is a domino tableau, then $\webcross_\domtab = \web_T$.
\end{thm}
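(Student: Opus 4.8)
The plan is to combine Theorem~\ref{thm:fw1} with a reconstruction argument comparing the two algorithms. By Theorem~\ref{thm:fw1} the map $\web\mapsto\domtab_\web$ from symmetric $3$-webs to domino tableaux equals $\fold$ composed with the tableau--web correspondence $\web\mapsto\symtab_\web$; since $\fold$ is a bijection from rotationally symmetric $3\times n$ tableaux onto domino tableaux and $\web\mapsto\symtab_\web$ restricts to a bijection from symmetric $3$-webs onto rotationally symmetric $3\times n$ tableaux (evacuation of the tableau corresponds to reflection of the web, \cite{PaPe}; see Section~\ref{sec:webs}), this composite is a bijection. It therefore suffices to prove that $\webcross_{\domtab_\web}=\web$ for every symmetric $3$-web $\web$: a one-sided inverse of a bijection is two-sided, so $\domtab\mapsto\webcross_\domtab$ is then the inverse bijection of $\web\mapsto\domtab_\web$, and since $\web_T$ is symmetric with tableau $T$, Theorem~\ref{thm:fw1} gives $\domtab=\fold(T)=\domtab_{\web_T}$, whence $\webcross_\domtab=\webcross_{\domtab_{\web_T}}=\web_T$.

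The backbone of the identity $\webcross_{\domtab_\web}=\web$ is the structure of m-diagrams of symmetric tableaux. If $\symtab$ is a rotationally symmetric $3\times n$ tableau --- equivalently a self-evacuating one --- then $\mdiag_\symtab$ is invariant under the reflection $\rho$ across its vertical axis, and $\rho$ interchanges the two non-crossing arc diagrams superimposed to form $\mdiag_\symtab$, namely the one coming from rows $1,2$ of $\symtab$ and the one coming from rows $2,3$. Writing $A$ for the first, $\mdiag_\symtab=A\cup\rho(A)$; its arcs occur in mirror pairs $\{\alpha,\rho(\alpha)\}$, and such a pair is a pair of disjoint mirror copies if the arcs avoid the axis and a pair of arcs crossing each other if the arcs meet the axis. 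This is exactly the ``two mirror copies versus crossing pair'' alternative built into the definition of the crossed m-diagram, and it is the reason ``doubling and unfolding'' should undo passage to the compression. One also checks that $\webcross_\domtab$, being produced by doubling and unfolding, is $\rho$-symmetric, and that the canonical resolution is $\rho$-equivariant (a crossing and its mirror resolve to mirror-image local pieces), so that $\webcross_\domtab$ is a genuine symmetric $3$-web.

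Granting these, fix a symmetric web $\web$, put $\symtab=\symtab_\web$, and note $\domtab_\web=\fold(\symtab)$ by Theorem~\ref{thm:fw1}, with $\web$ the canonical resolution of $\mdiag_\symtab$. The crux is to show that the compression $\compresstab$ of $\domtab_\web=\fold(\symtab)$ is precisely the $3$-row tableau whose m-diagram $\mdiag_\compresstab$ carries one representative of each mirror pair of $\mdiag_\symtab$ (a ``half'' of $\mdiag_\symtab$), the arcs of $\mdiag_\compresstab$ meeting the fold line being those from the axis-crossing mirror pairs and the dominoes of $\domtab_\web$ straddling the fold line being the cells that produce those arcs. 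Granting this, doubling and unfolding $\mdiag_\compresstab$ restores the full arc set of $\mdiag_\symtab$ --- arcs off the fold line doubling to disjoint mirror images, fold-line half-arcs doubling to crossing mirror pairs --- so $\mdiagcross_{\domtab_\web}$ coincides with $\mdiag_\symtab$ except for possibly carrying the $\rho$-images of the crossings already internal to $\mdiag_\compresstab$; these are resolution-invisible by $\rho$-equivariance, and hence $\webcross_{\domtab_\web}$, the canonical resolution of $\mdiagcross_{\domtab_\web}$, equals the canonical resolution of $\mdiag_\symtab$, namely $\web$. I would prove the crux by unfolding $\domtab_\web=\fold(\symtab)$ back to $\symtab$ and tracing the folding bijection domino by domino, the one-adjacent-row-pair-at-a-time behaviour being analogous to the ``literal folding of the arc diagram'' in the $2$-row case of Theorem~\ref{thm:2-by-n}.

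The main obstacle is this identification of the compression, and inside it the local analysis at the fold line / axis of symmetry: deciding which (half-)arcs of $\mdiag_\compresstab$ double to crossing pairs and which to disjoint pairs, and matching this against the dominoes of $\domtab_\web$ that straddle the fold line. The global $\rho$-symmetry reduces the picture to ``one side plus its mirror,'' but this near-axis bookkeeping, together with the verification that the residual doubled crossings vanish under canonical resolution, carries the real combinatorial content of the theorem.
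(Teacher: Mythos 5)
Your opening reduction is fine, and it is in fact close in spirit to the paper's: the paper also leans on Theorem~\ref{thm:fw1} plus bijectivity, except that it closes the square in the other direction, by running Algorithm~\ref{alg:fw1} on $\webcross_\domtab$ (after checking $\webcross_\domtab$ is a $3$-web and, via Proposition~\ref{prop:independent-of-drawing}, may be drawn symmetrically) and showing the resulting domino tableau equals $\domtab$. The problem is your crux. The claim that $\mdiagcross_{\domtab_\web}$ ``coincides with $\mdiag_{\symtab}$ except for resolution-invisible mirror crossings'' --- equivalently, that the m-diagram of the compression $\compresstab$ is a relabelled ``half'' of $\mdiag_{\symtab}$ --- is false, and the paper's running example already refutes it. There, $\compresstab$ has first arcs $\{1,2\},\{3,4\},\{5,8\}$ and second arcs $\{4,6\},\{2,7\},\{8,9\}$, so after doubling, crossing the pairs in $V=\{(3,4),(9,8)\}$, and relabelling $9',\dots,1',1,\dots,9$ as $1,\dots,18$, the crossed m-diagram contains (for instance) an arc joining boundary vertices $10$ and $11$; but $\mdiag_{\symtab}$ for the tableau of Figure~\ref{fig:runningexample} has first arcs $\{4,5\},\{3,6\},\{2,7\},\{10,12\},\{8,13\},\{1,14\}$ and second arcs $\{7,9\},\{6,11\},\{14,15\},\{13,16\},\{12,17\},\{5,18\}$ --- no arc $\{10,11\}$, and more generally a different underlying graph. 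Likewise, taking one representative per mirror pair of $\mdiag_{\symtab}$ and relabelling does not reproduce $\mdiag_{\compresstab}$. So $\mdiagcross_{\domtab}$ and $\mdiag_{\symtab}$ are genuinely distinct generalized m-diagrams that merely share the same canonical resolution; the paper's concluding remarks point out exactly this phenomenon (Theorem~\ref{thm:fw2} shows a web can arise from generalized m-diagrams other than $\mdiag_{\symtab_\web}$). Any argument that tries to prove the theorem by matching the two diagrams arc-for-arc, ``half-diagram'' style, therefore cannot succeed; the heuristic you invoke from the $2$-row case is precisely what the introduction says breaks down for $3$-webs.

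Because of this, the step you defer (``trace the folding bijection domino by domino to identify the compression with half of $\mdiag_{\symtab}$'') is not just unproved but unprovable as stated, and it is where all the content lives. The paper's actual mechanism avoids comparing diagrams at all: it relates web-distances between mirror faces of $\webcross_\domtab$ to arc-distances (Lemmas~\ref{lem:webdist-bounds} and~\ref{lem:symmetrical-distance-1}, using the classification of intersecting arcs in Lemma~\ref{lem:crossing-arcs}), relates those arc-distances to distances to the outer face in the web of the compression (Lemma~\ref{lem:symmetrical-distance-2}, via Tymoczko's $\webdist=\arcdist$ result), and then matches the row-index words of $\domtab$ and of the tableau produced by Algorithm~\ref{alg:fw1} through the word of $\compresstab$ and the correction term $\delta_k\in\{-1,0,1\}$ recording first/second vertical dominoes. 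If you want to salvage your outline, you would need to replace the diagram identification with some argument of this quantitative kind (or otherwise prove directly that the two distinct generalized m-diagrams have equal resolutions), which is essentially the theorem itself.
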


Hence, 
$\web \mapsto \domtab_\web$ and $\domtab \mapsto \webcross_\domtab$
are mutually inverse bijections between symmetrical webs and domino tableaux.


\section{Tableau operations}

In this section, we recall some of the basic definitions from tableau
theory, as well as the definitions and main properties of 
promotion, evacuation and folding.

A \emph{partition}
$\lambda = (\lambda_1, \dots, \lambda_l)$ is a weakly decreasing sequence
of positive integers.  We write $|\lambda| = \lambda_1 + \dots + \lambda_l$.
We identify $\lambda$ with its diagram, which is an array of 
$|\lambda|$ boxes, such
that there are $\lambda_i$ left-justified boxes in row $i$.  If 
$\mu = (\mu_1, \dots, \mu_m)$ is another partition, we write 
$\mu \leq \lambda$ if $m \leq l$ and $\mu_i \leq \lambda_i$ 
for $i=1,\dots, m$.  If $\mu \leq \lambda$,
the \emph{skew shape} $\lambda/\mu$ consists of boxes in
the diagram of $\lambda$ that are not in the diagram of $\mu$.
We write $|\lambda/\mu|  = |\lambda| - |\mu|$.  If $\mu= \varnothing$
is the empty partition, then $\lambda/\mu = \lambda$, and we
say that $\lambda$ is a \emph{straight shape}.

A \emph{standard Young tableau} of shape $\lambda/\mu$ 
is a filling of the
boxes of the diagram $\lambda/\mu$ with the numbers 
$1, \dots, N$, $N = |\lambda/\mu|$, such that the entries increase along 
rows and columns.  We denote the set of all such standard Young tableaux
by $\SYT(\lambda/\mu)$.  
We will sometimes encode a tableau 
$T \in \SYT(\lambda/\mu)$, by its \emph{row-index word} 
$w(T) = w_1 w_2 \cdots w_N$,
\[
   w_j = r, \quad  \text{if $j$ is in row $r$ of $T$}
\,.
\] 
For straight shapes, $w(T)$ uniquely specifies $T$; otherwise,
we also need to specify the skew shape.  
For example, the tableau $T$ in Figure~\ref{fig:runningexample} has
$w(T) = 111122213132223333$, and is in $\SYT(6,6,6)$.

\ysetshade{violet!30}
Given a standard Young tableau $T \in \SYT(\lambda/\mu)$ and a corner 
$\begin{young}[2ex] !\times \end{young}$
of $\mu$, we can perform a \emph{slide} as follows.  Starting
from $\begin{young}[2ex] !\times \end{young}$, draw the \emph{sliding path} 
through the entries of $T$, which moves either right or down at each step;
when there is a choice, the path moves toward  the smaller entry.  Then
shift the numbers one position forward along the path.  Here is an 
example, with $\lambda = (5,5,4,4)$, $\mu = (3,1)$.
\[
\begin{young}[c]
 , & , & , & 1 & 9  \\
 !\times & 2 & 3 & 11 & 12 \\
 4 & 6 & 7 & 13 \\
 5 & 8 & 10 & 14
\end{young}
\quad \to\quad
\begin{young}[c]
 , & , & , & 1 & 9  \\
 !\times & !2 & !3 & 11 & 12 \\
 4 & 6 & !7 & 13  \\
 5 & 8 & !10 & !14
\end{young}
\quad \to\quad
\begin{young}[c]
 , & , & , & 1 & 9   \\
 !2& !3 & !7 & 11 & 12 \\
 4 & 6 & !10 & 13  \\
 5 & 8 & !14  
\end{young}
\]
The \emph{rectification} of $T$, denoted $\rect(T)$, is the tableau 
obtained by performing slides until we have a straight shape.
This is independent of the order in which the slides are performed.

If $T$ is a tableau, and $k$ is an integer,
we will write  $T+k$ for be the tableau obtained from $T$ by adding $k$ to
all of its entries, and $T-k = T + (-k)$.   We write $T_{> k}$
and $T_{\leq k}$ for the subtableaux
formed by the entries greater than $k$ and less than or equal to $k$,
respectively.

For the remainder of this section, we will assume $\lambda$ is a partition 
and $N = |\lambda|$.

The \emph{promotion} operator $P: \SYT(\lambda) \to \SYT(\lambda)$
maps a tableau $T$ to be the unique tableau $P(T)$ such that
\[
P(T)_{\leq N-1} = \rect(T_{> 1} -1)
\,.
\]
In other words, to compute $P(T)$, we first delete the entry $1$ from $T$,
then subtract $1$ from all remaining entries, rectify, and finally
add a box containing $N$ to the result to produce a new standard 
Young tableau of shape $\lambda$.  For example,
\[
T = 
\begin{young}[c]
1 & 2 & 5  \\
3 & 4 & 8  \\
6 & 7 & 9\\
\end{young}
\ \to \ %
\begin{young}[c]
!\times & 1 & 4  \\
2 & 3 & 7  \\
5 & 6 & 8\\
\end{young}
\ \to \ %
\begin{young}[c]
!\times & !1 & 4  \\
2 & !3 & 7  \\
5 & !6 & !8\\
\end{young}
\ \to \ %
\begin{young}[c]
!1 & !3 & 4  \\
2 & !6 & 7  \\
5 & !8 \\
\end{young}
\ \to \ %
\begin{young}[c]
1 & 3 & 4  \\
2 & 6 & 7  \\
5 & 8 & 9\\
\end{young}
= P(T)
\,.
\]

For $k=1, \dots, N$, let
$P_k : \SYT(\lambda) \to \SYT(\lambda)$ be the operator
which 
performs promotion on the entries of less than or
equal to $k$, and fixes entries greater than $k$.  
Thus $P_k(T)$ is characterized by 
$P_k(T)_{> k} = T_{>k}$ and $P_k(T)_{\leq k} = P(T_{\leq k})$.
In particular,
$P_N = P$, and $P_1$ is the identity operator.

The \emph{evacuation} operator $E: \SYT(\lambda) \to \SYT(\lambda)$
is defined to be 
\[
E = P_1 \circ P_2 \circ \dots \circ P_N
\,.
\]
We think of this as repeatedly performing promotion, but
fixing the next largest entry after each iteration. 
Evacuation is an involution. 
A tableau $T$ is called \emph{self-evacuating} if $E(T) = T$.

For each number $k = 1, \dots, N$, the \emph{complement} of $k$ is the 
number $\compl{k} = N+1 -k$.  For example, $\compl{1} = N$.  We will make 
frequent use of this notation.

In the case where $\lambda$ is a rectangle, 
promotion and evacuation have the following
special properties.
\begin{thm}\label{p-and-e-on-rect-tableaux}
If $\lambda$ is a $l \times n$ rectangle then:
\begin{enumerate}
\item[(i)] $P^N(T)= T$.
\item[(ii)] $E(T)$ is obtained from $T$ by rotating $180^\circ$ and 
replacing each entry $k$ with its complement $\compl{k}$.
\end{enumerate}
\end{thm}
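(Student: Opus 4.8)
The two parts call for different arguments, so I would treat them separately: part~(ii) is elementary, while part~(i) is the genuine difficulty.

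For part~(ii), the plan is to use the classical description of evacuation as a rectification. For any $T \in \SYT(\lambda/\mu)$, the Sch\"utzenberger involution satisfies $E(T) = \rect(T^{\mathrm{rc}})$, where $T^{\mathrm{rc}}$ --- the \emph{reverse complement} of $T$ --- is obtained by rotating the filled diagram $180^\circ$ and replacing each entry $k$ by its complement $\compl{k}$. This is one of the standard equivalent definitions of $E$ (recorded, e.g., in \cite{rstanley}), and it is routine to check that it agrees with the recursive definition $E = P_1 \circ \cdots \circ P_N$ used here. Specializing to an $l \times n$ rectangle $\lambda$ with $\mu = \varnothing$: the $180^\circ$ rotation fixes the rectangle, so $T^{\mathrm{rc}}$ is again a filling of $\lambda$, and it is again \emph{standard}, since rotating the diagram reverses the order in which cells are read while complementing reverses the order of the values --- so entries still increase along rows and columns. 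A straight-shape tableau has no inner corners to slide into, so its rectification is itself; hence $E(T) = T^{\mathrm{rc}}$, which is precisely the asserted rotate-and-complement description.

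For part~(i), the plan is to deduce $P^N(T) = T$ from the deeper fact --- special to rectangular shapes --- that promotion has order \emph{exactly} $N$ on $\SYT(\lambda)$. This is a theorem of Haiman, proved via dual equivalence and ``shuffling'' (see \cite{rstanley} for an exposition and references); equivalently, it is the statement that promotion generates a cyclic group of order $N$ acting on the linear extensions of the minuscule poset $[l] \times [n]$. For the shapes that actually occur in this paper one can also argue by hand: when $l = 2$, promotion acts by rotating the associated non-crossing matching of $N$ points on a circle, so $P^N = \mathrm{id}$ at once; and when $l = 3$ the same follows once promotion is identified with rotation of the associated $3$-web, as in \cite{PPR}. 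I would state part~(i) through the general result so that the section does not depend on $l \le 3$.

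The main obstacle is entirely in part~(i). Unlike part~(ii), it is not a manipulation of a single tableau, and it cannot be extracted from the formal relations $E^2 = \mathrm{id}$ and $E \circ P \circ E = P^{-1}$ alone, which hold for every straight shape and leave the order of $P$ undetermined. What is required is a structural input that detects why the rectangle's extra rotational symmetry forces the period of $P$ to be $N$ rather than a proper divisor --- dual equivalence, the minuscule-poset picture, or (in low rank) the web model is exactly such an input.
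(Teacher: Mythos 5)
The paper itself offers no proof of this theorem: it is stated as background, exactly because both parts are classical (part (ii) goes back to Sch\"utzenberger, part (i) to Haiman), so your strategy of reducing to those known results is in the same spirit as the paper's treatment, and your derivation of (ii) from the identity $E(T)=\rect(T^{\mathrm{rc}})$ is correct --- for a rectangle the rotated-and-complemented filling is again a straight-shape standard tableau, so rectification is trivial and $E(T)=T^{\mathrm{rc}}$.

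One small correction to your part (i): it is not true that promotion has order \emph{exactly} $N$ on $\SYT(l\times n)$; for the $2\times 2$ rectangle, $N=4$ but $P$ acts on the two tableaux as an involution, so its order as a permutation is $2$. Haiman's theorem gives $P^N(T)=T$, i.e.\ the order divides $N$ (and likewise the group generated by $P$ is cyclic of order dividing $N$). Since $P^N=\mathrm{id}$ is all that is claimed and all you use, this does not affect the validity of your argument, but the statement should be phrased as ``$P^N=\mathrm{id}$'' rather than ``order exactly $N$.'' Your remark that the low-rank cases $l\le 3$ can alternatively be read off from rotation of $2$-webs and $3$-webs is consistent with how the paper uses the result, though the paper quotes the web-rotation theorems from \cite{PPR} rather than deriving $P^N=\mathrm{id}$ from them.
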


In light of (ii), we say that a self-evacuating rectangular tableau
is \emph{rotationally symmetric}.

The definition of folding is similar to evacuation, except that we fix
the next \emph{two} largest entries after each iteration of promotion.
For $j=1, \dots, \fhalfN$, define \emph{partial folding}
operators $f^j : \SYT(\lambda) \to \SYT(\lambda)$, by
\[
   f^j = P_{N-2j+2} \circ P_{N-2j+4} \circ \dots \circ P_{N}
\]
The \emph{folding} operator $F: \SYT(\lambda) \to \SYT(\lambda)$ is
$F = f^{\fhalfN}$.

\begin{ex}  
\label{ex:folding}
\ysetshade{blue!30}
If we fold the tableau
  \[
T =   
\begin{young}[c]
   1 & 3 & 4 & 7 \\
   2 & 5 & 6 & 8 \\
  \end{young} 
\]
we obtain
\[
T 
\xmapsto{P_8}
\begin{young}[c]
   1 & 2 & 3 & 6 \\
   4 & 5 & ]=!7 & !8 \\
  \end{young} 
\xmapsto{P_6}
\begin{young}[c]
   1 & 2 & ]=!5 & !6 \\
   ]=]3 & 4 & ]=!7 & !8 \\
  \end{young} 
\xmapsto{P_4}
\begin{young}[c]
   1 & !3\ynobottom & ]=!5 & !6 \\
   ]=]2 & !4\ynotop & ]=!7 & !8 \\
  \end{young} 
\xmapsto{P_2}
\begin{young}[c]
   !1\ynobottom & !3\ynobottom & ]=!5 & !6 \\
   ]=]!2\ynotop & !4\ynotop & ]=!7 & !8 \\
  \end{young} 
 = \fold(T)
\,.
\]
The shaded entries are fixed for following promotion operation in sequence.
\end{ex}

In the example above, the pairs of $(1,2)$, $(3,4)$, $(5,6)$ and
$(7,8)$ are adjacent
in $\fold(T)$.  We have emphasized this visually by removing the horizontal or 
vertical line between these entries. 
If $N$ is even, we say that $T \in \SYT(\lambda)$ is
a \emph{domino tableau} if entries $2j-1$ and $2j$ are adjacent in $T$, for
$j = 1,\dots, \tfrac{N}{2}$.  Each pair of adjacent entries $(2j-1, 2j)$ is
called a \emph{domino}.
For $N$ odd, we will say that $T \in \SYT(\lambda)$ is a domino tableau
if entries $2j$ and $2j+1$ are adjacent, for $j = 1,\dots, \tfrac{N-1}{2}$.
In the odd case, the lone entry $1$ is not part of a domino.
In both cases, we have the following connection between evacuation and folding.

\begin{prop}
$T \in \SYT(\lambda)$ is self-evacuating if and only if $\fold(T)$ is 
a domino tableau.  In particular, if $\lambda$ is a 
rectangle, $T$ is rotationally symmetric if and only if
$\fold(T)$ is a domino tableau.
\end{prop}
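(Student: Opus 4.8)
The equivalence ``$T$ self-evacuating $\iff \fold(T)$ a domino tableau'' is a theorem of van Leeuwen \cite{vL}, so the intended proof may well just quote it, the ``in particular'' clause then being immediate from Theorem~\ref{p-and-e-on-rect-tableaux}(ii), which is exactly what makes ``rotationally symmetric'' synonymous with ``self-evacuating'' for rectangular shapes. If one wants a self-contained argument, the plan is to prove the equivalence by induction on $N = |\lambda|$; the cases $N \le 2$ are immediate, since then every $T \in \SYT(\lambda)$ is self-evacuating and every $\fold(T)$ is a domino tableau.

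For the inductive step I would first record how $\fold$ behaves when the two largest entries are removed. Set $U = P(T) = P_N(T)$. In the defining composition $\fold(T) = P_2 \circ P_4 \circ \dots \circ P_N(T)$ (or $P_3 \circ P_5 \circ \dots \circ P_N(T)$ when $N$ is odd), every factor other than the initial $P_N$ has index at most $N-2$, hence fixes the cells of the entries $N-1$ and $N$, while acting on the entries $\le N-2$ exactly as the folding composition for an $(N-2)$-box tableau. Using that deleting the outer corner of $N$ and then of $N-1$ from $P(T)$ again leaves a straight shape, this gives
\[
\fold(T)_{\leq N-2} = \fold\!\big(P(T)_{\leq N-2}\big),
\]
with $N-1$ and $N$ occupying the same cells of $\fold(T)$ as of $P(T)$. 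Hence $\fold(T)$ is a domino tableau precisely when $\fold(P(T)_{\le N-2})$ is a domino tableau and the entries $N-1, N$ lie in adjacent cells of $P(T)$; by the inductive hypothesis the first condition is equivalent to $P(T)_{\le N-2}$ being self-evacuating. So the proposition reduces to the claim that $T$ is self-evacuating if and only if $P(T)_{\le N-2}$ is self-evacuating and $N-1, N$ are adjacent in $P(T)$.

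To prove this claim I would unwind evacuation one entry at a time. From $E = P_1 \circ \dots \circ P_N$ and $P_N(T) = P(T)$ one gets that $E(T)$ carries $N$ in the cell $c$ into which promotion of $T$ inserts it --- the endpoint of the jeu-de-taquin rectification path used to form $P(T)$ --- and that $E(T)_{\le N-1} = E(P(T)_{\le N-1})$. Thus $E(T) = T$ forces $c = \mathrm{pos}_T(N)$ and, since $E$ is an involution on $\SYT(\lambda \setminus \{c\})$, also $P(T)_{\le N-1} = E(T_{\le N-1})$. Comparing the rectification paths of $T$ and of $T_{\le N-1}$ --- which coincide until the first would step into the cell $c$, whereupon the second must stop at the cell immediately before $c$ --- shows that the largest entry $N-1$ of $E(T_{\le N-1}) = P(T)_{\le N-1}$ sits adjacent to $c = \mathrm{pos}_{P(T)}(N)$, which is the adjacency half of the claim. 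The step I expect to be the main obstacle is the remaining assertion that $P(T)_{\le N-2}$ is again self-evacuating, together with the converse implication: this requires controlling how evacuation interacts with the skew shape obtained by deleting a domino from $\lambda$, and the cleanest route is likely the growth-diagram analysis underlying \cite{vL}, in which self-evacuation becomes a reflective symmetry of the growth diagram and the domino statement becomes the observation that consecutive partitions read along a fixed zigzag of that diagram differ by a single cell --- and from which, in any case, the whole equivalence can simply be quoted.
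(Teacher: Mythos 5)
Your proposal takes essentially the same approach as the paper: the paper gives no proof of this proposition at all, relying on van Leeuwen's result \cite{vL} exactly as in your first paragraph, and the rectangular case is immediate because Theorem~\ref{p-and-e-on-rect-tableaux}(ii) is precisely what makes ``rotationally symmetric'' synonymous with ``self-evacuating'' for rectangular shapes. Your additional inductive sketch is extra and not needed to match the paper, and as you note yourself, its remaining gap would in any case be closed by quoting \cite{vL}.
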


We conclude this section by recording three additional observations about 
promotion and folding that are
used in the proof of Theorems~\ref{thm:2-by-n} and \ref{thm:fw1}.

\begin{lemma} \label{promotion-rectification}
Suppose $\lambda$ is a rectangle, and $T \in \SYT(\lambda)$. Let $k$ be an element of $ 0, \hdots, N$, and let
$T' = P^k(T)$.  Then $T'_{\leq N-k} = \rect(T_{> k} - k)$, and
$T_{\leq k} = \rect(T'_{> N-k} - (N-k))$.
\end{lemma}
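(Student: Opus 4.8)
The plan is to prove the first identity $T'_{\leq N-k} = \rect(T_{> k} - k)$ by induction on $k$, using the definition of promotion, and then deduce the second identity from the first by applying it to $T'$ in place of $T$ together with part (i) of Theorem~\ref{p-and-e-on-rect-tableaux}.

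First I would handle the base case $k = 0$: here $P^0(T) = T$, $T_{>0} = T$, and $T_{\leq N} = T$, so the claim $T_{\leq N} = \rect(T_{>0} - 0) = \rect(T)$ reduces to the fact that $T$ is already of straight shape (since $\lambda$ is a rectangle, hence a straight shape), so $\rect(T) = T$. For the inductive step, suppose the statement holds for some $k$ with $0 \leq k < N$, so $T' = P^k(T)$ satisfies $T'_{\leq N-k} = \rect(T_{>k} - k)$. I want to pass to $k+1$, i.e.\ to $P(T') = P^{k+1}(T)$. By the definition of promotion, $P(T')_{\leq N-1} = \rect(T'_{>1} - 1)$. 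The key bookkeeping step is to see that rectifying $T'_{>1} - 1$ and then restricting to entries $\leq N-k-1$ gives the same result as rectifying $T_{>k+1} - (k+1)$ directly. Concretely, $T'_{>1}$ has entries $2, \dots, N$; restricting the straight-shape tableau $\rect(T'_{>1}-1)$ to its entries $\leq N-k-1$ is the same as first restricting $T'_{>1}-1$ to entries $\leq N-k-1$ (equivalently $T'_{>1}$ to entries $\leq N-k$, i.e.\ $T'$ restricted to $\{2,\dots,N-k\}$) and then rectifying, because rectification commutes with taking the subtableau of smallest entries (jeu de taquin slides on the low entries are unaffected by which larger entries sit where). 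By the inductive hypothesis, $T'$ restricted to $\{1,\dots,N-k\}$ is $\rect(T_{>k}-k)$, whose entries $2,\dots,N-k$ form $\rect(T_{>k}-k)_{>1} = \rect\big((T_{>k}-k)_{>1}\big) = \rect(T_{>k+1} - (k+1))$ — again using that rectification commutes with restriction to largest entries, applied in reverse. Shifting indices down by one throughout and reconciling the restriction ``$\leq N-1$ then $\leq N-k-1$'' with just ``$\leq N-k-1$'' closes the induction.

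For the second identity, observe that $P^{N-k}(T') = P^{N-k}(P^k(T)) = P^N(T) = T$ by Theorem~\ref{p-and-e-on-rect-tableaux}(i). Now apply the first identity (already proved) with $T'$ in the role of $T$ and $N-k$ in the role of $k$: this gives $(P^{N-k}(T'))_{\leq N-(N-k)} = \rect(T'_{>N-k} - (N-k))$, i.e.\ $T_{\leq k} = \rect(T'_{>N-k} - (N-k))$, as required.

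The main obstacle is the commuting step: carefully justifying that jeu de taquin rectification commutes with restricting a tableau to its block of smallest (respectively largest) consecutive entries, and that these restrictions interact correctly with the index shift coming from each promotion step. This is a standard but fiddly property of rectification; I would either cite it or give a short argument noting that a sliding path for the small entries never crosses or depends on the positions of entries larger than the current threshold, so the partial rectifications can be interleaved freely. Everything else is routine index arithmetic.
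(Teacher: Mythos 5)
Your proposal is correct and follows essentially the same route as the paper: the paper disposes of the first identity by citing the standard iterated-promotion/rectification fact (Fulton, Ch.~2), which is exactly what your induction with the jeu-de-taquin restriction-commutation properties proves, and your derivation of the second identity from $P^N(T)=T$ (so $T=P^{N-k}(T')$) is precisely the paper's "symmetrical argument." The only difference is that you spell out the details of the cited standard fact rather than citing it.
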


\begin{proof}
The first statement follows almost by definition (see e.g.
\cite[Ch. 2, Lemma 3]{fulton}).  Since $T$ is rectangular, $P^N(T) = T$.
Thus $T = P^{N-k}(T')$, and the second statement follows by a symmetrical 
argument.
\end{proof}

\begin{lemma} 
\label{lem:promotion-folding}
For $T \in \SYT(\lambda)$ and $j \in \{1, \dots, \fhalfN\}$, 
we have $f^j(T)_{\leq \compl{2j}} = P^j(T)_{\leq \compl{2j}}$.
In particular the entry $\compl{2j}$ in $\fold(T)$ is the same box 
as it is in $P^j(T)$.
\end{lemma}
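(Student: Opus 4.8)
The plan is to prove the statement $f^j(T)_{\leq \compl{2j}} = P^j(T)_{\leq \compl{2j}}$ by induction on $j$, unwinding the definitions of the partial folding operators $f^j$ and the partial promotion operators $P_k$. Recall that $f^j = P_{\compl{2j}} \circ P_{\compl{2j-2}} \circ \dots \circ P_{\compl{2}} = P_{N-2j+2} \circ \cdots \circ P_N$, and that each $P_k$ only alters the entries $\leq k$ while fixing the entries $> k$. The key observation is that $f^j$ is built from $f^{j-1}$ by precomposing with one more operator $P_{\compl{2j}}$: indeed $f^j = P_{\compl{2j}} \circ f^{j-1}$ (reading the composition so that $f^{j-1}$ is applied first). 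So I would set $S = f^{j-1}(T)$ and $S' = P^{j-1}(T)$, and by the inductive hypothesis $S_{\leq \compl{2j-2}} = S'_{\leq \compl{2j-2}}$, hence also $S_{\leq \compl{2j}} = S'_{\leq \compl{2j}}$ since $\compl{2j} = \compl{2j-2} - 2 < \compl{2j-2}$.

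Next I would analyze what $P_{\compl{2j}}$ does: by definition $P_{\compl{2j}}(S)_{\leq \compl{2j}} = P(S_{\leq \compl{2j}})$, the ordinary promotion applied to the subtableau of $S$ with entries at most $\compl{2j}$. Since $S_{\leq \compl{2j}} = S'_{\leq \compl{2j}}$, we get $f^j(T)_{\leq \compl{2j}} = P(S'_{\leq \compl{2j}}) = P(P^{j-1}(T)_{\leq \compl{2j}})$. It remains to identify this with $P^j(T)_{\leq \compl{2j}}$. This is where Lemma~\ref{promotion-rectification} enters: applied with $k = j-1$ it tells us $P^{j-1}(T)_{\leq N-(j-1)} = \rect(T_{>j-1} - (j-1))$, so in particular $P^{j-1}(T)_{\leq \compl{2j}}$ (note $\compl{2j} = N - 2j + 1 \leq N - j + 1$) is a subtableau of a rectification, and applying one more promotion to it should agree with the $\leq \compl{2j}$ part of $P^j(T)$. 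Concretely, I want to check that for any $T$ and any threshold $m \leq N-1$, $P(P^{j-1}(T))_{\leq m} = P(P^{j-1}(T)_{\leq m+1})_{\leq m}$ — i.e. that promotion restricted below a threshold only depends on the entries below threshold-plus-one — and then apply this with $m = \compl{2j}$, using $\compl{2j}+1 = \compl{2j-1} \leq N-(j-1)$ to stay within the range where the inductive description applies.

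The base case $j = 0$ (or $j=1$) is immediate since $f^0$ and $P^0$ are both the identity, or $f^1 = P_N = P = P^1$. The final sentence of the statement — that the box containing $\compl{2j}$ is the same in $\fold(T)$ and in $P^j(T)$ — follows because $\fold(T) = f^{\fhalfN}(T)$ is obtained from $f^j(T)$ by further applying operators $P_k$ with $k < \compl{2j}$, none of which move the box occupied by any entry $> k$, hence none of which move the box of $\compl{2j}$; so the box of $\compl{2j}$ in $\fold(T)$ equals its box in $f^j(T)$, which by the main identity equals its box in $P^j(T)$.

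The main obstacle I anticipate is the bookkeeping in the step "$P$ restricted below threshold $m$ depends only on entries $\leq m+1$." This is essentially the standard fact that promotion is a composition of elementary "jeu de taquin"-type slides that propagate from the smallest entry, so the trajectory of entries $\leq m$ is unaffected by entries $> m+1$ — but one has to be slightly careful about the off-by-one coming from the fact that promotion deletes the entry $1$ and shifts, so the "threshold" shifts by one under promotion. I would phrase this carefully using the characterization $P(U)_{\leq N-1} = \rect(U_{>1} - 1)$ together with the analogous locality property of $\rect$, namely that $\rect(V)_{\leq m}$ is determined by $V_{\leq m}$. With that lemma in hand, the rest is routine substitution.
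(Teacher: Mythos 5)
Your overall strategy is the same as the paper's: induct on $j$, peel one partial promotion off $f^j$, use the inductive hypothesis to replace $f^{j-1}(T)$ by $P^{j-1}(T)$ below the relevant threshold, and finish with a locality property of promotion (equivalently, of rectification) below a threshold. However, as written there is an off-by-one error that makes a key step false. Since $\compl{k}=N+1-k$, the operator you peel off is $P_{N-2j+2}=P_{\compl{2j}+1}$, not $P_{\compl{2j}}$; that is, $f^j = P_{\compl{2j}+1}\circ f^{j-1}$ (and the last factor in the defining composition is $P_N=P_{\compl{2}+1}$, not $P_{\compl{2}}$). Consequently the correct intermediate identity is $f^j(T)_{\leq \compl{2j}} = P\bigl(f^{j-1}(T)_{\leq \compl{2j}+1}\bigr)_{\leq \compl{2j}}$, not $P\bigl(f^{j-1}(T)_{\leq \compl{2j}}\bigr)$. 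These genuinely differ: in general $P(S_{\leq m+1})_{\leq m} \neq P(S_{\leq m})$, and the two can even have different shapes. For instance, if $S$ is the $2\times 2$ tableau with first row $1,3$ and second row $2,4$ and $m=2$, then $P(S_{\leq 3})_{\leq 2}$ is the row $1\,2$, while $P(S_{\leq 2})$ is the column with $1$ above $2$. Because of this mismatch, the identity you derive does not plug into the (correctly stated) locality fact $P(S)_{\leq m}=P(S_{\leq m+1})_{\leq m}$ that you propose to verify at the end, so the chain does not close as written.

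The fix is mechanical and recovers exactly the paper's proof: with $m=\compl{2j}$, write $f^j(T)_{\leq m} = P_{m+1}(f^{j-1}(T))_{\leq m} = P(f^{j-1}(T)_{\leq m+1})_{\leq m}$, invoke the inductive hypothesis at threshold $\compl{2(j-1)}=m+2\geq m+1$ to replace $f^{j-1}(T)$ by $P^{j-1}(T)$, and then apply your locality fact (which follows from $P(U)_{\leq |U|-1}=\rect(U_{>1}-1)$ together with $\rect(V)_{\leq m}=\rect(V_{\leq m})$) to get $P(P^{j-1}(T))_{\leq m}=P^j(T)_{\leq m}$. Two smaller remarks: the appeal to Lemma~\ref{promotion-rectification} is unnecessary and slightly out of place, since that lemma is stated for rectangular $\lambda$ whereas the present lemma holds for arbitrary $\lambda$ --- the locality statement just mentioned is all that is needed; and your argument for the final sentence (the box of $\compl{2j}$ is unchanged from $f^j(T)$ to $\fold(T)$ because the remaining operators are $P_k$ with $k<\compl{2j}$) is correct and coincides with the paper's.
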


\begin{proof}
The statement is true for $j=1$, since $f^1 = P$.   Assume $j >1$
and the result is true for $j-1$.  Let $k = \compl{2j}$.
By definition, $f^j(T) = P_{\compl{2j}+1} \circ f^{j-1}(T)$.  Therefore,
$f^j(T)_{\leq k} = 
(P_{k+1}  \circ f^{j-1}(T))_{\leq k}
= P_{k+1} (f^{j-1}(T)_{\leq k+2})_{\leq k}
= P_{k+1} (P^{j-1}(T)_{\leq k+2})_{\leq k}
= (P_{k+1} \circ P^{j-1}(T))_{\leq k}
= P^j(T)_{\leq k}$.  This proves the first statement.
The second statement follows, since the entry $\compl{2j}$ is in the
same box in $\fold(T)$ as it is in $f^j(T)$.
\end{proof}

\begin{lemma}
\label{lem:promotion-folding-symmetric}
Suppose $T \in \SYT(\lambda)$ is self-evacuating. Then for 
$j \in \{1, \dots, \chalfN\}$, 
the entry $\compl{2j}+1$ in $\fold(T)$ is the same box 
as it is in $P^{j-1}(T)$.
\end{lemma}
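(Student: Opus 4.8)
The plan is to mimic the proof of Lemma~\ref{lem:promotion-folding}, but now tracking the entry $\compl{2j}+1$ instead of $\compl{2j}$, and to feed in the self-evacuating hypothesis precisely where it is needed to shift the power of $P$ from $j$ down to $j-1$. First I would set $k = \compl{2j}$, so the entry in question is $k+1$. From Lemma~\ref{lem:promotion-folding} we already know that $\fold(T)_{\le k} = P^j(T)_{\le k}$ as filled skew shapes, and in particular the box occupied by $k+1$ in $\fold(T)$ is determined by $\fold(T)_{\le k+1}$, which differs from $\fold(T)_{\le k}$ by a single box. Chasing the definition $f^j = P_{k+1}\circ f^{j-1}$ exactly as in the previous lemma, one gets $f^j(T)_{\le k+1} = P_{k+1}\bigl(f^{j-1}(T)_{\le k+2}\bigr)_{\le k+1} = P_{k+1}\bigl(P^{j-1}(T)_{\le k+2}\bigr)_{\le k+1} = \bigl(P_{k+1}\circ P^{j-1}(T)\bigr)_{\le k+1}$, using Lemma~\ref{lem:promotion-folding} (applied with index $j-1$, valid since $k+2 = \compl{2(j-1)}$) for the middle equality. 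So the box of $k+1$ in $\fold(T)$ equals the box of $k+1$ in $P_{k+1}\circ P^{j-1}(T)$.

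The remaining task is therefore purely about a single tableau $S := P^{j-1}(T)$: I must show that applying $P_{k+1}$ to $S$ does not move the entry $k+1$. Equivalently, $P(S_{\le k+1})$ has its largest entry $k+1$ in the same box as $S_{\le k+1}$ does. Now $P(S_{\le k+1})$ places $k+1$ in the unique box that makes a straight shape out of $\rect\bigl((S_{\le k+1})_{>1} - 1\bigr)$; so I need the shape of $\rect((S_{\le k+1})_{>1})$ to equal the shape of $S_{\le k+1}$ minus the box containing $k+1$. This is where self-evacuation enters: I expect to invoke Theorem~\ref{p-and-e-on-rect-tableaux}(ii) together with Lemma~\ref{promotion-rectification} to translate the ``position of $k+1$ in $P^{j-1}(T)$'' into a statement about the position of the complementary entry $\overline{k+1} = 2j$ near the beginning of $T$ (or of $E(T)=T$), and then use the domino-like pairing of $\overline{k+1}=2j$ with $\overline{k}=2j+1$ that self-evacuation forces. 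Concretely, the self-evacuating condition should guarantee that the cell vacated when we delete $1$ and rectify is ``far enough away'' from the cell of $k+1$ that the latter is untouched.

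The main obstacle, and the part I would spend the most care on, is exactly this last geometric claim: showing that under self-evacuation the promotion $P_{k+1}$ fixes the box of $k+1$ in $P^{j-1}(T)$. The cleanest route is probably to compare $P^{j-1}(T)$ with $P^{j}(T)$ directly: promotion sends the box of $k+1$ in $P^{j-1}(T)$ to the box of $k$ in $P^{j}(T)$ (promotion shifts entries, lowering each label by $1$ after the initial deletion), and by Lemma~\ref{lem:promotion-folding} the box of $k = \compl{2j}$ in $P^{j}(T)$ equals its box in $\fold(T)$. Meanwhile Lemma~\ref{lem:promotion-folding} with index $j$ also pins the box of $\compl{2j} = k$; pairing these should force the box of $k+1$ in $P^{j-1}(T)$ to coincide with the box of $k+1$ in $\fold(T)$. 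I would need to check the edge case $j = \chalfN$ (relevant when $N$ is odd, where $\compl{2j}+1$ may be $1$ or $2$) separately, but there the statement is essentially trivial since $P^0 = \mathrm{id}$ and the entries $1,2$ are the smallest. If the direct comparison above does not immediately close, the fallback is the complement/evacuation computation sketched in the previous paragraph, which is more calculation-heavy but self-contained.
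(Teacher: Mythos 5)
Your opening reduction is correct and is essentially the paper's own first move: the computation $f^j(T)_{\le \compl{2j}+1} = \bigl(P_{\compl{2j}+1}\circ P^{j-1}(T)\bigr)_{\le \compl{2j}+1}$ (via Lemma~\ref{lem:promotion-folding} at index $j-1$) reduces the lemma to showing that $P_{\compl{2j}+1}$ does not move the entry $\compl{2j}+1$ of $P^{j-1}(T)$ (equivalently of $f^{j-1}(T)$, which agrees with it on entries $\le \compl{2j}+2$). But that is exactly the step you leave open, and neither sketched route closes it. The ``cleanest route'' is circular as written: you cite Lemma~\ref{lem:promotion-folding} twice for the same fact (the box of $\compl{2j}$ in $P^{j}(T)$ equals its box in $\fold(T)$) and then assert that ``pairing these should force'' the conclusion, without any mechanism and, crucially, without using the self-evacuation hypothesis anywhere in that argument --- yet the statement is false without it. The fallback route (``the vacated cell should be far enough away'') is likewise only a hope.

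The missing idea is the one the paper actually uses. Since $T$ is self-evacuating, $\fold(T)$ is a domino tableau, and because the entries $\compl{2j}$, $\compl{2j}+1$ are untouched by the later operators $P_{\compl{2j}-1},\dots$, they already form a domino in $f^j(T)=P_{\compl{2j}+1}\bigl(f^{j-1}(T)\bigr)$. Now examine the promotion of the subtableau of entries $\le \compl{2j}+1$ of $f^{j-1}(T)$: its maximal entry $\compl{2j}+1$ sits at an outer corner of that subshape and becomes $\compl{2j}$, staying in its box unless the sliding path reaches it (in which case the path terminates there), while the new entry $\compl{2j}+1$ is deposited at the outer corner where the sliding path ends. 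If the path ended at any other corner, then $\compl{2j}$ and $\compl{2j}+1$ would occupy two distinct outer corners of a straight shape, which are never adjacent --- contradicting the domino. Hence the path ends at the box of $\compl{2j}+1$, that entry does not move, and your first-paragraph reduction finishes the proof. Two further cautions: comparing $P^{j-1}(T)$ with the full promotion $P^{j}(T)$, as you propose, introduces an avoidable complication, since the box of $\compl{2j}+1$ need not be an outer corner of $\lambda$ itself and the full sliding path may pass through it and continue; the truncated operator $P_{\compl{2j}+1}$ avoids this. And your odd-$N$ edge case $j=\chalfN$ is indeed easy, but not because $P^0=\mathrm{id}$: it follows from Lemma~\ref{lem:promotion-folding} with $j=\fhalfN$, which gives $\fold(T)_{\le 2} = P^{\fhalfN}(T)_{\le 2}$ and hence pins the box of the entry $1$.
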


\begin{proof}
Let $T' = f^{j-1}(T)$.  Since $F(T)$ is a domino tableau, entries 
$\compl{2j}$ and $\compl{2j}+1$
form a domino in $f^j(T) = P_{\compl{2j}+1}(T')$.  This
implies that the sliding path in the 
application of $P_{\compl{2j}+1}$ to $T'$ must end at $\compl{2j}+1$.  
Thus $\compl{2j}+1$ is in the same box in $f^{j}(T)$ as it is in 
$f^{j-1}(T)$.  The result now follows as in
Lemma~\ref{lem:promotion-folding}.
\end{proof}


\section{Webs}
\label{sec:webs}
In this section, we recall the definitions of $2$-webs and $3$-webs, as
well as the bijections between webs and rectangular tableaux. 

\begin{defn}
\label{def:arc-diagram}
An \emph{arc diagram} is a drawing of a graph in the plane, in which
there are $N$ vertices (usually labelled $1, \dots, N$, from left to right),
along a horizontal line called the \emph{boundary line}.  
We have arcs joining some pairs of vertices, which are above
the boundary line.  The arcs may be directed or undirected.  We
denote an undirected arc joining $a$ and $b$ as $\{a,b\}$ and
a directed arc from $a$ to $b$ as $(a,b)$.  We require that
each pair of arcs have at most one point of intersection 
(including endpoints), and no three arcs are concurrent (excluding endpoints).
We say that two arcs \emph{cross} each other if they intersect not at an endpoint. In other words,  $(a,b)$ and $(c,d)$ (or $\{a,b\}$ and $\{c,d\}$) \emph{cross}
if and only if
$\min{a,b} < \min{c,d} < \max{a,b} < \max{c,d}$ or vice-versa.
\end{defn}

Note that an arc diagram is determined (up to planar isotopy) by its
underlying graph if and only if there are no three arcs that cross
pairwise.

If $X$ is a subset of the plane, and $\alpha$ is an arc in an arc diagram, 
we say that $\alpha$ is \emph{above} $X$, if $X$ is contained in the 
closed region bounded by $\alpha$ and the boundary line.  
If $(a,b)$ and $(c,d)$ (or $\{a,b\}$ and $\{c,d\}$) are 
two distinct arcs in an arc diagram, then $(a,b)$ is \emph{above} $(c,d)$ 
if and only if
$\min{a,b} < \min{c,d} < \max{c,d} < \max{a,b}$.   This defines a
partial order on the arcs.

\begin{defn}
A \emph{$2$-web} is an undirected arc diagram with $N = 2n$
vertices, in which every vertex has degree $1$ 
and there are no crossings.
\end{defn}

As noted in the introduction, there is a simple bijection between
$\SYT(n,n)$ and the set of $2$-webs on $2n$ vertices. A
tableau $T \in \SYT(n,n)$ corresponds to the unique web $\web_T$
such that that entries
in the first row of $T$ are the left ends of arcs in $\web_T$, and 
the entries in the second row of $T$ are the right ends of arcs.

Webs can be cyclically rotated and reflected.  For a web $\web$,
define $P(\web)$ to be the cyclic rotation of $\web$.  This is
the unique web such that
the map $k \mapsto k-1 \pmod N$ induces a graph isomorphism 
from $\web$ to $P(\web)$.  
This is most easily visualized by drawing the boundary
line as a circle, instead of as a straight line.  
For example, suppose $\web$ is the $2$-web below.
\[
    \includegraphics[scale=0.225]{arc_diag.png}
\]
If we draw the $2$-webs $\web$ and $P(\web)$ on a circle, 
we get the following drawings.
\[ 
 \web \includegraphics[scale=0.2]{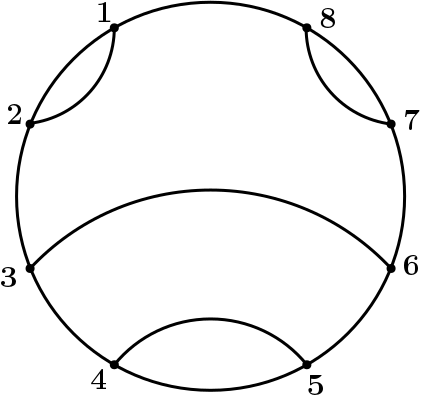} \qquad \qquad \qquad
 P(\web) \includegraphics[scale=0.2]{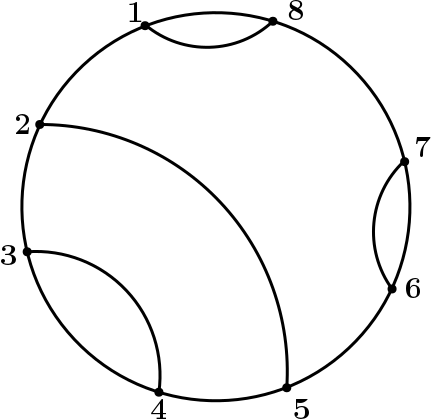} 
\]
In this picture, $P$ fixes the labels, and rotates the interior of the
circle one step clockwise.
We define $E(\web)$ to be the reflection of $\web$;
here the map $k \mapsto \compl{k}$ induces a graph isomorphism from
$\web$ to $E(\web)$.  We will say that a web is \emph{symmetrical} if
$E(\web) = \web$.   The operations $P$ and $E$ on webs correspond
to promotion and evacuation on tableaux.

\begin{thm}
If $T \in \SYT(n,n)$ then
$P(\web_T) = \web_{P(T)}$ and
$E(\web_T) = \web_{E(T)}$.
\end{thm}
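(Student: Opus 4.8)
The plan is to verify the two identities $P(\web_T) = \web_{P(T)}$ and $E(\web_T) = \web_{E(T)}$ directly from the explicit combinatorial description of the bijection $T \leftrightarrow \web_T$, by tracking how rows of $T$ (equivalently, the row-index word $w(T)$) encode the left- and right-ends of arcs. The key translation is the following: for $T \in \SYT(n,n)$, a vertex $j$ is a left-end of an arc in $\web_T$ if and only if $j$ lies in the first row of $T$, i.e. $w(T)_j = 1$; and the arc structure (which left-end is joined to which right-end) is determined by the standard ``bracket matching'' on $w(T)$, reading left-ends as open brackets and right-ends as close brackets. This is because a non-crossing perfect matching on $2n$ points is uniquely determined by the set of left-endpoints, and the first/second row condition on a $2 \times n$ standard Young tableau is exactly the ballot (bracket-sequence) condition. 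So $\web_T$ is literally ``the non-crossing matching whose open-bracket positions are the first-row entries of $T$.''

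For the evacuation statement, I would use Theorem~\ref{p-and-e-on-rect-tableaux}(ii): $E(T)$ is obtained from $T$ by $180^\circ$ rotation combined with $k \mapsto \compl{k}$. On the level of the row-index word, and since $l = 2$, this sends $w(T)_j$ to $3 - w(T)_{\compl{j}}$; that is, $j$ is a first-row entry of $E(T)$ iff $\compl{j}$ is a second-row entry of $T$ iff $\compl{j}$ is a right-end of an arc of $\web_T$. But ``$\compl{j}$ is a right-end of an arc of $\web_T$'' is exactly ``$j$ is a left-end of an arc of the reflection $E(\web_T)$,'' since the reflection $k \mapsto \compl{k}$ swaps left- and right-ends. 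Since both $\web_{E(T)}$ and $E(\web_T)$ are non-crossing matchings with the same set of left-endpoints, they coincide. (Equivalently: a graph isomorphism $k \mapsto \compl{k}$ from $\web_T$ to $E(\web_T)$ is a non-crossing matching, and one checks its left-ends are the first row of $E(T)$.)

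For the promotion statement, the cleanest route is to avoid re-deriving promotion from scratch and instead use the tableau-side characterization already recorded in the excerpt. By Lemma~\ref{promotion-rectification} with $k=1$, $P(T)_{\leq N-1} = \rect(T_{>1} - 1)$, and the new box $N$ is appended; one checks that for a $2\times n$ shape the rectification of the skew two-row tableau $T_{>1}-1$ simply deletes the box originally containing $1$ and shifts, so the row-index word of $P(T)$ is obtained from $w(T)$ by deleting $w(T)_1$, shifting indices down by one, and appending the letter $2$ at position $N$ (the appended box must go in the second row to keep column-strictness of a two-row rectangle). On the web side, $P(\web)$ is defined by the condition that $k \mapsto k-1 \pmod N$ is a graph isomorphism $\web \to P(\web)$; drawing the boundary on a circle, this is a rigid rotation, so $P(\web)$ is again a non-crossing matching, and its set of left-endpoints is obtained from that of $\web$ by the same ``delete position~$1$, shift down, append position~$N$'' operation — with the one subtlety that if vertex $1$ was a left-end its partner becomes the new vertex-$N$ left-end, and if vertex $1$ was a right-end nothing is appended to the open set at the top. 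Matching these two descriptions of the left-endpoint set, and invoking uniqueness of non-crossing matchings with a given left-endpoint set, gives $P(\web_T) = \web_{P(T)}$.

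The main obstacle I anticipate is the careful bookkeeping in the promotion case: verifying that rectification of the two-row skew tableau $T_{>1}-1$ behaves as claimed (this is where one genuinely uses that $l=2$, so that jeu-de-taquin from the single missing corner just slides one row's worth of entries), and correctly handling the ``wrap-around'' of vertex $1$ — whether it was a left- or right-end — so that the appended vertex $N$ ends up in the correct row and the bracket sequence of the resulting word is still ballot. The evacuation case and the reduction ``two non-crossing matchings with equal left-endpoint sets are equal'' are routine. A cleaner alternative for the promotion half, if the direct argument gets unwieldy, is to cite \cite{PPR} for the rotation-equals-promotion statement on $3$-webs (which specializes) or to prove it by induction on $n$ using the recursive ``outermost arc'' structure, but I expect the direct word-level argument above to be the shortest self-contained path.
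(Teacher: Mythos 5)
Your evacuation half is fine, and note that the paper does not prove this theorem at all (it simply cites \cite[Theorem 1.4]{PPR} and \cite[Theorem 3.3]{PaPe}), so a self-contained word-level argument like yours is a legitimately different route. The evacuation argument --- reflection swaps left- and right-ends, Theorem~\ref{p-and-e-on-rect-tableaux}(ii) swaps rows under complementation, and a non-crossing perfect matching is determined by its set of left endpoints --- is complete modulo standard facts.

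The promotion half, however, contains a genuine error. You claim that rectifying the skew tableau $T_{>1}-1$ ``simply deletes the box originally containing $1$ and shifts,'' so that $w(P(T))$ is $w(T)$ with its first letter deleted and a $2$ appended. This is false: in a $2\times n$ rectangle the jeu-de-taquin path starting at $(1,1)$ always descends to row $2$ at exactly one column, so exactly one entry jumps from row $2$ to row $1$. The paper's own Example~\ref{ex:folding} shows this: there $w(T)=12112212$ but $w(P(T))=11122122$, not $21122122\mapsto$(append $2$). Your web-side bookkeeping is off in a matching way: vertex $1$ of a non-crossing matching is always a left end (so the ``vertex $1$ was a right-end'' case never occurs), and under rotation the arc $\{1,m\}$ becomes $\{m-1,N\}$, so it is $m-1$ (the shifted partner of $1$) that becomes a new \emph{left} end while $N$ becomes a \emph{right} end --- not ``the partner becomes the new vertex-$N$ left-end.'' The actual content of the promotion statement, which your proposal never addresses, is the identification of these two exceptional positions: you must show that the column at which the sliding path drops from row $1$ to row $2$ is precisely the position of the partner of vertex $1$ in the bracket matching of $w(T)$ (e.g.\ by characterizing that column as the first $j$ with equally many $1$'s and $2$'s among $w_1\cdots w_j$, or by an induction on the outermost arc $\{1,m\}$). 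Without that step the two ``delete/shift/append'' descriptions you compare are not equal to each other, nor to the truth, so the promotion identity does not follow.
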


\begin{proof}
The first assertion is discussed in \cite[Theorem 1.4]{PPR}, and the 
second is in \cite[Theorem 3.3]{PaPe}.
\end{proof}

If $\web$ is a symmetrical $2$-web, we define $\fold(\web)$ to be the
web obtained by ``folding $\web$ to the right''.  Formally this means:
\begin{itemize}
\item if $\{a,b\}$ and $\{\compl{a}, \compl{b}\}$ are arcs of $\web$,
where $a < b \leq n$, then
$\fold(\web)$ has corresponding arcs $\{\compl{2a}, \compl{2b}+1\}$ and 
$\{\compl{2a}+1, \compl{2b}\}$;
\item if $\{a,\compl{a}\}$ is an arc of $\web$, $a \leq n$, then 
$\fold(\web)$ has a corresponding arc $\{\compl{2a}, \compl{2a}+1\}$.
\end{itemize}
Theorem~\ref{thm:2-by-n} states that if $T \in \SYT(n,n)$ is rotationally
symmetrical, then $\web_{\fold(T)} = \fold(\web_T)$.

\begin{defn} 
\label{def:3-web}
A \emph{$3$-web} is directed bipartite graph drawn in the plane, satisfying the 
following conditions.
\begin{enumerate}
    \item[(1)] There are $N = 3n$ \emph{boundary vertices} embedded on a horizontal \emph{boundary line}. 
These are labelled from $1, \dots, N$ from left to right. 
All boundary vertices have degree $1$ and are sources. 
    \item[(2)] The \emph{internal vertices} (vertices that are not on the boundary line) have degree $3$, and are either sources or sinks. 
    \item[(3)] All edges are above the boundary line. 
    \item[(4)] All \emph{internal faces} (faces that are not incident with the boundary line)
       have at least $6$ sides. 
\end{enumerate}
\end{defn} 

In addition to the internal faces, a $3$-web $\web$ also has \emph{boundary
faces}, which are bounded by arcs and the boundary line.
We denote the boundary face incident with boundary vertices 
$k$ and $k+1$ by $B_k$; for the outer face 
(which is incident with boundary vertices $1$ and $3n$) we write 
$B_0 = B_N$.
If $X$ and $Y$ are any two faces in a $3$-web $\web$, let 
$\webdist(X, Y)$ denote the distance between $X$ and $Y$,
i.e. the minimal number of edges we need to cross to get from 
$X$ to $Y$. This is called the \emph{web-distance}. 

We now recall the algorithms defining a bijection between $3$-webs
on $3n$ boundary vertices and $\SYT(n,n,n)$.


\begin{algo} \label{alg:tw1}
\setlength{\parindent}{0pt}
{\it The tableau $T_\web$ associated to a $3$-web $\web$.}

\medskip

{\tt Input:} Let $\web$ be a $3$-web on $N =3n$ boundary vertices.

\medskip

For $z \in \{-1,0,1\}$, define
\[
\Phi(z) = \begin{cases} 
1  & \text{if } z = -1\\
2 & \text{if } z = 0 \\
3 & \text{if } z = 1 .
\end{cases} 
\]
For $i=0, \dots, N$, let $d_i := \webdist(B_0, B_i)$. \\
For $i=1, \dots, N$, let $w_i := \Phi(d_{i-1} - d_i)$. 
(Note that since $B_{i-1}$ and $B_{i}$ are adjacent, we necessarily
have $d_i - d_{i-1} \in \{-1,0,1\}$, so this is defined.)

\medskip

{\tt Output:} 
The unique tableau $T_\web \in \SYT(n,n,n)$ such that
$w(T_\web) = w_1 w_2 \dots w_N$.
\end{algo}

\begin{ex}
For the web $\web$ in Figure~\ref{fig:runningexample},
the distances $d_i$ are labelled on the faces $B_i$ as shown below.
\begin{center}
    \includegraphics[scale = 0.5]{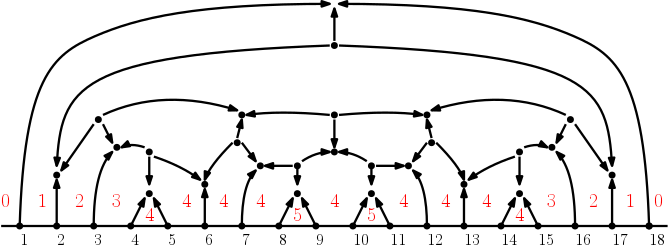}
\end{center}
Thus, Algorithm~\ref{alg:tw1} produces $w(T_\web) = 111122213132223333$,
which is the row-index word of the tableau shown 
in Figure~\ref{fig:runningexample}.
\end{ex}

We now recall the algorithm for the inverse map, as given 
in \cite{tymoczko}.

\begin{algo} \label{alg:tw2} 
{\it The web $\web_T$ associated to tableau $T$.}

\medskip

\noindent {\tt Input:} A tableau $T \in \SYT(n,n,n)$.

\medskip

\noindent {\tt Step 1:} Drawing the m-diagram.

Begin by drawing the boundary line and boundary points labelled 
$1, \dots, 3n$.  We now construct an arc diagram called $\mdiag_T$, 
the \emph{m-diagram} of $T$ as follows.

Draw the $2$-web for the $2 \times n$ tableau formed by the top
two rows of $T$.  Call these arcs \emph{first arcs}, and direct
the first arcs so that they are pointing from left to right. 

Draw the $2$-web for the $2 \times n$ tableau formed by the second
two rows of $T$. Call these arcs \emph{second arcs}, and direct
the second arcs so that they are pointing from right to left.

\medskip

\noindent{\tt Step 2:} Resolving the m-diagram.

Each boundary vertex of the m-diagram is either a source of degree $1$
or a sink of degree $2$.  For
each vertex of the second type, make the following local change
 \begin{center}
   \vc{\includegraphics[scale =0.4]{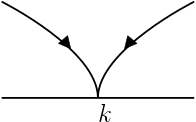}} $\quad \to \quad$ \vc{\includegraphics[scale =0.4]{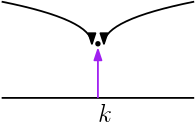}} 
 \end{center}
so that every boundary vertex is now a source of degree $1$.

If two internal arcs cross, make the following local change
 \begin{center}
   \vc{\includegraphics[scale =0.4]{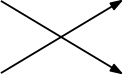}} $\quad \to \quad$ \vc{\includegraphics[scale =0.4]{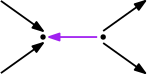}} 
 \end{center}

 The new edges that arises from the local changes (marked in purple)    are called \emph{intersection edges},  and the original edges (marked in black)  are called \emph{arc edges}.  

\medskip

\noindent{\tt Output:} The resulting planar graph $\web_T$.
\end{algo}

\begin{ex}
If we apply Algorithm~\ref{alg:tw2} to the tableau $T$
in Figure~\ref{fig:runningexample}, Step 1 produces the m-diagram
below. The red arcs are the first arcs and the blue arcs are the second arcs. 
\begin{center}
    \includegraphics[scale = 0.4]{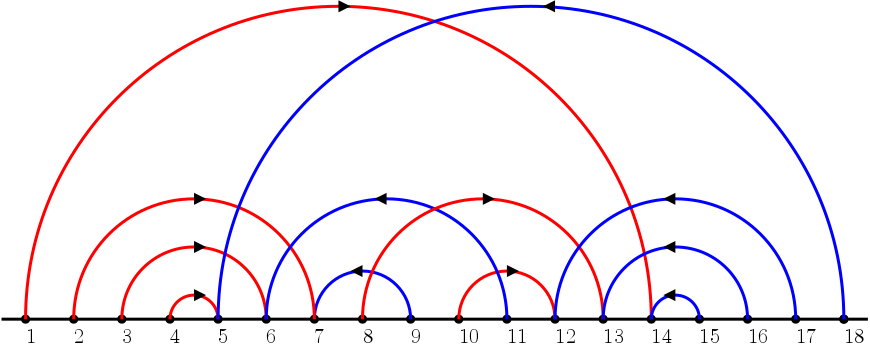}
\end{center}
Taking the canonical resolution, in Step 2, we obtain the web $\web_T$, shown
in Figure~\ref{fig:runningexample}.
\end{ex}

In order to discuss variants of Algorithm~\ref{alg:tw2}, we introduce
the following definition.

\begin{defn}
A \emph{generalized m-diagram} is a directed arc diagram with the property
that a boundary vertex is either a source of degree $1$ or a sink of degree $2$.
\end{defn}

Any generalized m-diagram can be \emph{resolved}
into a bipartite planar graph, by the process
described in Step 2 of Algorithm~\ref{alg:tw2}.
However, in general, the resolution need not be a $3$-web, since 
in some cases, the 
resolution will have internal faces of degree less than $6$.

Moreover, unlike an m-diagram, a generalized m-diagram is not
necessarily determined by its underlying graph.  There may be several
inequivalent ways (up to planar isotopy) to draw a generalized 
m-diagram, and these can have inequivalent resolutions.  

Fortunately, the two difficulties above are related.

\begin{prop}
\label{prop:independent-of-drawing}
Suppose $\mdiag$ is a generalized m-diagram whose resolution is a $3$-web
$\web$.  Then every drawing of $\mdiag$ as a valid arc diagram has
$\web$ as its resolution.
\end{prop}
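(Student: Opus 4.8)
The plan is to show that the resolution of a generalized m-diagram depends only on the underlying graph whenever that resolution is a $3$-web, by exploiting condition (4) in Definition~\ref{def:3-web} — that all internal faces have at least $6$ sides. First I would recall exactly what freedom there is in drawing a generalized m-diagram with a fixed underlying graph: by the remark following Definition~\ref{def:arc-diagram}, an arc diagram is determined up to planar isotopy by its underlying graph \emph{unless} there are three arcs that pairwise cross. So the only ambiguity comes from triples (or larger collections) of mutually crossing arcs; choosing a drawing amounts to choosing, for each such collection, a cyclic/linear arrangement of how the arcs stack. The two resolutions of a $2$-arc crossing and the reordering of a $3$-arc mutual crossing are the local moves relating different drawings, and the resolution process of Step~2 replaces each crossing by a small ``square'' with two arc-edges and two intersection-edges.

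Next I would set up the key local comparison. If two drawings of $\mdiag$ differ by a single elementary move on a mutually-crossing triple $\alpha,\beta,\gamma$ (a ``triangle move'': pushing one arc across the crossing point of the other two), then their resolutions differ only inside a disk $\Delta$ containing that triangle and nothing else of the diagram. Inside $\Delta$ the two resolutions are the two ways of resolving a small triangle of crossings, which are related by the web ``square move'' / $H$--$I$ transformation. The crucial point is that this square move on webs is \emph{not} in general an isotopy — it genuinely changes the graph — \emph{unless} one of the two internal faces created is a bigon or a square (degree $2$ or $4$), in which case applying the relevant reduction (Kuperberg's relations / the face-degree-$<6$ collapse) identifies the two sides. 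So I would argue: since the resolution of \emph{this particular} drawing is assumed to be a $3$-web $\web$, every internal face of $\web$ has $\geq 6$ sides; I then need to show the \emph{other} drawing's resolution is forced to coincide.

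The heart of the argument — and the step I expect to be the main obstacle — is showing that the $3$-web condition on \emph{one} resolution actually rigidifies the situation enough to forbid any nontrivial alternative. I would approach this by contradiction: suppose two drawings give non-isomorphic resolutions $\web$ and $\web'$, with $\web$ a $3$-web. Connect them by a sequence of triangle moves; at the first move whose resolution $\web''$ differs from $\web$, localize as above to a disk $\Delta$. Either $\web''$ is obtained from $\web$ by a square move that creates an internal face of degree $<6$ in one of them — but $\web$ has no such face, so the small face must be in $\web''$, and I can reduce it; tracing the reduction, I want to conclude that after reduction $\web''$ equals $\web$, contradicting the choice of first differing move, or else show the move was vacuous to begin with (the triangle was already ``flat'' in $\web$, i.e. one of the three pairwise crossings was spurious in the sense that two of $\alpha,\beta,\gamma$ do not actually cross after resolution). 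Making this dichotomy airtight — that a genuine triangle move between two drawings always produces a bigon or square on at least one side when the other side is a reduced web — is the delicate combinatorial core; I would handle it by a careful case analysis of the orientations (source/sink) of the three arcs involved, using that in an m-diagram arcs carry a consistent source/sink structure so that a mutually crossing triple has only a bounded number of orientation patterns, and for each pattern exhibit the small face explicitly.

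Finally, I would assemble the pieces: any two drawings of $\mdiag$ are connected by finitely many triangle moves (and $2$-crossing reorderings, which are already resolution-invariant by Step~2 being symmetric in the two strands); by the previous paragraph each such move preserves the resolution once we know one endpoint is a $3$-web — and $3$-web-ness is then inherited all along the chain — so all drawings resolve to $\web$. A clean way to package the induction is to phrase it as: ``if some drawing of $\mdiag$ resolves to a $3$-web $\web$, then every adjacent drawing resolves to $\web$ and in particular to a $3$-web,'' and then propagate along the connectivity graph of drawings. I would also remark that this is the natural home for invoking Proposition~\ref{prop:independent-of-drawing} later when the crossed m-diagram $\mdiagcross_\domtab$ is drawn ambiguously, which is exactly why the statement is isolated here.
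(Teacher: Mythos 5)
Your setup is the same as the paper's: different drawings of $\mdiag$ are related by type III (triangle) moves on mutually crossing triples, and the degree-$4$/orientation analysis is the right thing to look at. But the core of your argument does not close, for two related reasons. First, you propose to handle the case where the two sides of a triangle move resolve differently by finding a face of degree $<6$ in one resolution and then ``reducing'' it via Kuperberg-type relations so as to identify the two sides. The proposition asserts literal equality of the resolutions as planar graphs; Step 2 of Algorithm~\ref{alg:tw2} involves no reduction step, so equality ``after reduction'' is a statement about web equivalence, not about the resolution, and cannot yield the conclusion. Second, the contradiction you sketch does not actually produce a contradiction: if the other drawing's resolution $\web''$ had a small face and only agreed with $\web$ after reduction, that would simply mean $\web''\neq\web$, i.e.\ it is exactly the scenario you must rule out, not one you can absorb.

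The missing idea is that the hypothesis rigidifies the situation \emph{before} any move is made: the orientations of the three arcs are data of $\mdiag$ itself, independent of the drawing, and one checks (this is the short case analysis) that unless the triple is oriented cyclically consistently, the resolved triangle is a face of degree $4$ on \emph{both} sides of the move --- in particular in the given drawing, contradicting that its resolution is a $3$-web. In the one remaining orientation pattern, the two local pictures on either side of the triangle move resolve to the \emph{identical} local graph; no $H$--$I$/square move relates them and no reduction is needed. This is the paper's proof: every admissible triangle move leaves the resolution unchanged on the nose, so it stays a $3$-web and the argument propagates along the whole sequence of moves. Your ``or else the move was vacuous'' fallback gestures at this, but it should be the main (indeed the only) case, established directly from the orientation constraint rather than reached through a reduction-based contradiction.
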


\begin{proof}
Any two drawings of $\mdiag$ are related by a sequence of 
type III shadow Reidemeister moves.
In order to avoid creating faces of degree $4$ in the resolution, 
the orientation of the arcs involved in the move must be as shown below.
\[
  \vc{\includegraphics[scale=0.3]{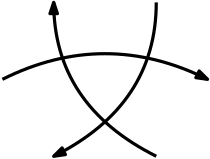}}
  \qquad \quad \longleftrightarrow \quad \qquad
  \vc{\includegraphics[scale=0.3]{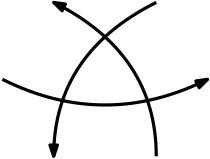}}
\]
However, both of the local diagrams above have the same resolution.
Thus the resolution is unaffected by any sequence of such moves.
\end{proof}

Cyclic rotation and reflection of $3$-webs is defined exactly the same
way as for $2$-webs.  We denote cyclic rotation map by $P$, and the
reflection map by $E$.  Again, these correspond to promotion and
evacuation on tableaux.

\begin{thm}
If $T \in \SYT(n,n,n)$, then
$P(\web_T) = \web_{P(T)}$ and
$E(\web_T) = \web_{E(T)}$.
\end{thm}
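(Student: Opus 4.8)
The plan is to show that the bijection $T \mapsto \web_T$ intertwines promotion/evacuation on $\SYT(n,n,n)$ with rotation/reflection on $3$-webs, treating the two generators separately; I expect only the promotion statement to need substantial input, and I would handle the evacuation statement by a direct reduction to the $2$-web case that is already proved in the excerpt.

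For the evacuation statement, I would argue through Algorithm~\ref{alg:tw2}. By Theorem~\ref{p-and-e-on-rect-tableaux}(ii), $E(T)$ is obtained from $T$ by a $180^\circ$ rotation together with the replacement $k \mapsto \compl{k}$, so at the level of row-index words $w(E(T))_k = 4 - w(T)_{\compl{k}}$ for all $k$. This identity shows that the subword of $w(E(T))$ on the letters $\{1,2\}$ is the reversal of the subword of $w(T)$ on $\{2,3\}$ after the relabelling $3 \mapsto 1,\ 2 \mapsto 2$, and symmetrically that the $\{2,3\}$-subword of $w(E(T))$ is the reversal of the $\{1,2\}$-subword of $w(T)$ after $1 \mapsto 3,\ 2 \mapsto 2$. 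Interpreting these subwords through the $2$-web bijection, this says precisely that the first arcs of $\mdiag_{E(T)}$ are the mirror image of the second arcs of $\mdiag_T$, and the second arcs of $\mdiag_{E(T)}$ are the mirror image of the first arcs of $\mdiag_T$; the orientations match because first arcs point left-to-right while second arcs point right-to-left, and reflection exchanges these two conventions. Hence $\mdiag_{E(T)}$ is literally the reflection of $\mdiag_T$. Since each of the two local moves in Step~2 of Algorithm~\ref{alg:tw2} depends only on the local oriented picture and is therefore left--right mirror symmetric, the resolution procedure commutes with reflection, so $\web_{E(T)} = E(\web_T)$; Proposition~\ref{prop:independent-of-drawing} (or the fact that an m-diagram is determined by its underlying graph) removes any ambiguity of drawing. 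Alternatively this statement is \cite[Theorem~3.3]{PaPe}.

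For the promotion statement I would invoke \cite{PPR}, and I expect this to be the main obstacle to any self-contained treatment. Unlike evacuation, promotion of a $3 \times n$ tableau does not act pair-of-rows by pair-of-rows: the promotion sliding path weaves through all three rows, so the subwords of $w(P(T))$ on $\{1,2\}$ and on $\{2,3\}$ are not the $2$-row promotions of the corresponding subwords of $w(T)$, and $P(\mdiag_T)$ is not in general $\mdiag_{P(T)}$. Thus the m-diagram shortcut used above is unavailable. A direct proof would instead have to go through Algorithm~\ref{alg:tw1}: cyclic rotation of $\web$ moves the base boundary face from $B_N$ to $B_{N-1}$, and one would need to show that the resulting change in the web-distance vector $(d_i)$ exactly reproduces the effect of promotion as described via rectification in Lemma~\ref{promotion-rectification}. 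Carrying this out amounts to re-deriving the growth-rule computation underlying \cite{PPR}, so for the purposes of this paper I would be content to cite that result. Finally, since rotation and reflection generate the dihedral group, the two established correspondences together give the full statement.
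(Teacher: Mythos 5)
Your proposal is correct, and it splits the two assertions differently from the paper: the paper proves both halves purely by citation (promotion via \cite[Theorem~2.5]{PPR}, evacuation via \cite[Theorem~3.3]{PaPe}), whereas you cite \cite{PPR} for promotion --- exactly as the paper does, and your assessment that no easy m-diagram shortcut exists for promotion is accurate --- but supply a direct argument for the evacuation half. That argument is sound: since $w(E(T))_k = 4 - w(T)_{\compl{k}}$ by Theorem~\ref{p-and-e-on-rect-tableaux}(ii), and since the $2$-web matching is the unique non-crossing matching once the left/right endpoint roles are fixed, the reflected second arcs of $\mdiag_T$ have precisely the endpoint roles and non-crossing property required of the first arcs of $\mdiag_{E(T)}$ (and symmetrically), so $\mdiag_{E(T)}$ is the mirror image of $\mdiag_T$ as a directed arc diagram; note also that no three arcs of an m-diagram can pairwise cross (two arcs of the same type never cross), so the diagram is determined by its underlying graph and Proposition~\ref{prop:independent-of-drawing} is not even needed. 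The one place you are a little quick is the claim that the two local moves of Step~2 of Algorithm~\ref{alg:tw2} are ``mirror symmetric'': under reflection the two strands in a crossing exchange their types (first versus second) while both geometric orientations reverse, so one must check that the resolved picture for the reflected crossing is the reflection of the resolved picture for the original crossing; this does hold because the resolution is canonical for a pair of oppositely directed strands, but it deserves the explicit check. With that verified, your route gives a self-contained proof of the evacuation statement where the paper only cites, at the cost of a slightly longer argument; the paper's approach buys brevity and defers all work to the literature.
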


\begin{proof}
The first assertion is discussed in \cite[Theorem 2.5]{PPR},
and the second is proved in \cite[Theorem 3.3]{PaPe}.
\end{proof}


\section{Algorithms for symmetrical $3$-webs}

In this section, we modify Algorithms~\ref{alg:tw1} and~\ref{alg:tw2} to
produce bijections between symmetrical webs and domino tableaux.
We first modify Algorithm~\ref{alg:tw1} so that instead of computing
the distance from a face $B_j$ to the outer face $B_0$, we compute
the distance from $B_j$ to its mirror image $B_{N-j}$.

\begin{algo} \label{alg:fw1}
{\it The domino tableau $\domtab_\web$ associated to a symmetrical $3$-web $\web$.}

\medskip

\noindent{\tt Input:} A symmetrical $3$-web $\web$ on $N = 3n$ boundary vertices.

\medskip

For $z \in \{-2,-1,0,1,2\}$, define
\[  \Lambda (z)  
= \begin{cases} (1,1)  & \text{if } z = -2 \\
(1,2) &\text{if } z = -1 \\ 
(2,2) & \text{if } z = 0 \\
(2,3) & \text{if } z = 1 \\
(3,3) & \text{if } z = 2. \\
\end{cases}
\,.
 \] 

 For $j=0, \dots, \fhalfN$, let $h_j := \webdist(B_j, B_{N-j})$.  

\smallskip

For $j=1, \dots, \fhalfN$, let 
$(w_{\compl{2j}}  , w_{\compl{2j}+1} )  := \Lambda(h_j - h_{j-1})$. 

\smallskip

If $N$ is odd, let $w_1 :=1$.

\medskip

{\tt Output:} 
The unique tableau $\domtab_\web \in \SYT(n,n,n)$ such that
$w(\domtab) = w_1 w_2 \dots w_N$.
\end{algo}

\begin{ex} \label{ex:fw1}
For the $3$-web for $\web$ in Figure~\ref{fig:runningexample},
the distances $h_j$ are labelled on the faces $B_j$ as shown below.
 \begin{center}
    \includegraphics[scale = 0.4]{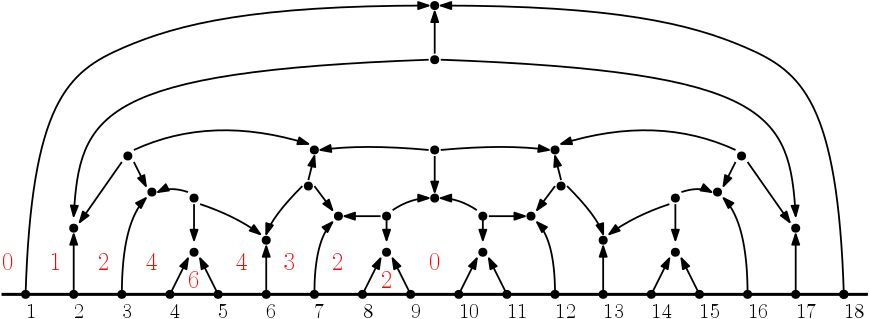}
\end{center}
Now we compute:
\begin{align*}
(w_{1}, w_{2}) &= \Lambda(h_9-h_8) = \Lambda (0-2) = (1,1) \\
(w_{3}, w_{4}) &= \Lambda(h_8-h_7) = \Lambda (2-2) = (2,2) \\
(w_{5}, w_{6}) &= \Lambda(h_7-h_6) = \Lambda (2-3) = (1,2) \\
(w_{7}, w_{8}) &= \Lambda(h_6-h_5) = \Lambda (3-4) = (1,2) \\
(w_{9}, w_{10}) &= \Lambda(h_5-h_4) = \Lambda (4-6) = (1,1)   \\
(w_{11}, w_{12}) &= \Lambda(h_4-h_3) = \Lambda (6-4) = (3,3)  \\
(w_{13}, w_{14}) &= \Lambda(h_3-h_2) = \Lambda (4-2) = (3,3) \\
(w_{15}, w_{16}) &= \Lambda(h_2-h_1) = \Lambda (2-1) = (2,3) \\
(w_{17}, w_{18}) &= \Lambda(h_1-h_0) = \Lambda (1-0) = (2,3). 
\end{align*}
Thus $w(\domtab_\web) = 112212121133332323$, and
\[
\domtab_\web = 
   \begin{young}[c]
    ]=1 & =]2 & ]=]5\ynobottom & ]=]7\ynobottom & ]=9 & =]10 \\
    ]=3 & =]4 & ]=]6\ynotop & ]=]8\ynotop & ]=]15\ynobottom & =]17\ynobottom \\
    ]=11 & =]12 & ]=13 & =]14 & ]=]16\ynotop & =]18\ynotop \\
   \end{young}
\,.
\]
One can verify directly that $\domtab_\web = \fold(T_\web)$.
\end{ex}


We now give the inverse algorithm, which is a modification of
Algorithm~\ref{alg:tw2}.
There are enough minor differences between the even and odd cases that it
is easier to  treat them separately.  We begin with the case where $n$ is even.

For $n$ even, a $3 \times n$ domino tableau is a concatenation
of \emph{simple blocks} of the following shapes.
\begin{center}
\begin{tabular}{ccc}
\includegraphics[scale=0.25]{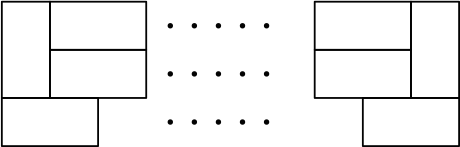}  &
~~~~~\includegraphics[scale=0.25]{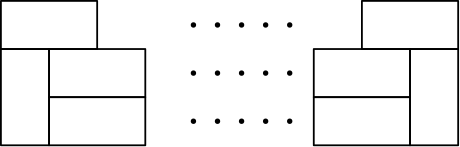}~~~~~  &
\includegraphics[scale=0.25]{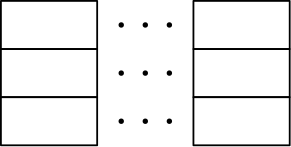}  \\
Type 1 &  Type 2 & Type 3 \\
\end{tabular}
\end{center}
Here, the omitted dominoes are always horizontal.
Blocks of types 1 and 2 have two vertical dominoes, and type 3 blocks have 
no vertical dominoes.
For example, the tableau $\domtab_\web$ in Example~\ref{ex:fw1} is
the concatenation of the blocks 
\[
   \begin{young}[c]
    ]=1 & =]2  \\
    ]=3 & =]4  \\
    ]=11 & =]12 
   \end{young}
\qquad
   \begin{young}[c]
 ]=]5\ynobottom & ]=]7\ynobottom \\
 ]=]6\ynotop & ]=]8\ynotop \\
 ]=13 & =]14 
   \end{young}
\qquad\text{and}\qquad
   \begin{young}[c]
 ]=9 & =]10 \\
 ]=]15\ynobottom & =]17\ynobottom \\
 ]=]16\ynotop & =]18\ynotop \\
   \end{young}
\]
which are of type 3, type 1, and type 2, respectively. 


\begin{algo}
\label{alg:fw2-even} 
{\it The crossed web $\webcross_\domtab$ of a domino tableau $\domtab$ (even case).}

\medskip

\noindent{\tt Input:} A domino tableau $\domtab \in \SYT(n,n,n)$, 
where $n$ is even.

\medskip

\noindent{\tt Step 1:}
Let $N := 3n$, $M := \frac{N}{2}$.
For each $ k  \in \{1, \hdots , M\}$, replace the pair of
entries $ (2k-1, 2k)$ with a single label $k$.  
Call the resulting tableau $\rdomtab$. 

\medskip

\noindent{\tt Step 2:}
Decompose $\rdomtab$ into its blocks.  Each block of type 1
or type 2 has two vertical dominoes, $k_1$ and $k_2$, from which we
form a \emph{vertical domino pair} $(k_1, k_2)$.
For type 1 blocks, we take $k_1 < k_2$, and for type 2 blocks 
we take $k_1 > k_2$. 
In either case, we call $\min{k_1,k_2}$ a \emph{first vertical domino}
and $\max{k_1,k_2}$ a \emph{second vertical domino}.
Define $V$ to be the set of all vertical domino pairs in $\rdomtab$.

\medskip

\noindent{\tt Step 3:}
 For each $k \in \{1, \dots, M\}$, 
if $k$ labels a horizontal domino in row $r$, 
 then let $v_k := r$.  If $k$ labels a vertical domino in rows $r$ and $r+1$,
we let $v_k := r$ if $k$ is a first vertical domino, and $v_k := r+1$
if $k$ is a second vertical domino.  
Define $C \in \SYT(\frac{n}{2}, \frac{n}{2}, \frac{n}{2})$, 
to be the tableau with 
$w(C)= v_1v_2 \dots v_M$.  $C$ is called the
\emph{compression} of $D$.

\medskip

\noindent{\tt Step 4:}
Draw the m-diagram of $C$ on vertices labelled 
$ 1, \hdots , M$. Reflect it to the left and label the
reflected vertices $M' , \hdots, 2', 1' $, from left to right.
This arc diagram is called 
the \emph{reflected m-diagram} of $C$.

\medskip

\noindent{\tt Step 5:}
We assert that every pair in  $V$ is a directed arc in 
the m-diagram of $C$.
For each $(k_1,k_2) \in V$, remove the direct arcs 
$(k_1,k_2)$ and $(k_1', k_2')$ from the reflected m-diagram of $\compresstab$ 
and replace them with directed arcs $(k_1, k_2')$ and $(k_1', k_2)$.
The resulting arc diagram, denoted $\mdiagcross_\domtab$, is called the
\emph{crossed m-diagram} of $D$.

\medskip

\noindent{\tt Step 6:} The crossed m-diagram is a generalized m-diagram,
so it has a resolution.  Define the crossed web $\webcross_\domtab$ 
to be the 
resolution of $\mdiagcross_\domtab$.  Relabel the vertices $1, \dots, 3n$.

\medskip

\noindent{\tt Output:} %
$\webcross_\domtab$
\end{algo}

\begin{ex} 
Applying Algorithm~\ref{alg:fw2-even} to the tableau
$\domtab = \domtab_\web$ from Example~\ref{ex:fw1}, in
Steps 1--3, we obtain
\[
\rdomtab = \vc{\includegraphics[scale=0.3]{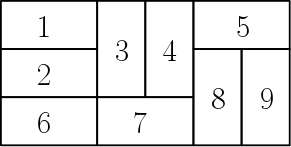}} \\
\qquad\qquad
V = \big\{ (3,4) , (9,8) \big\}
\qquad\qquad
C = \begin{young}[c] 1 & 3 & 5 \\ 2 & 4 & 8 \\ 6 & 7 & 9 \end{young}
\,.
\]
The reflected m-diagram of $C$ in Step 4 is
\[
\vc{\includegraphics[scale = 0.35]{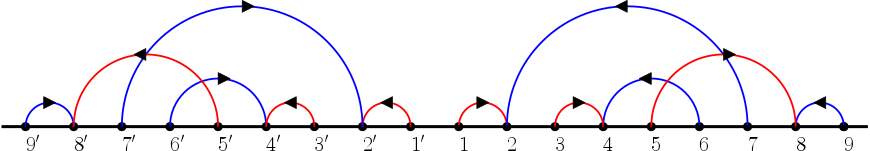}}
\]
and crossing arcs from $V$ in Step 5
yields crossed m-diagram $\mdiagcross_\domtab$.
\[
\vc{\includegraphics[scale=0.35]{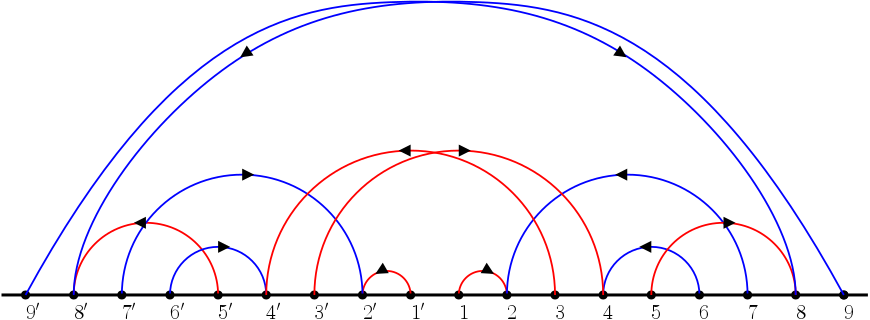}}
\]
Taking the resolution in Step 6, we once again obtain the web shown in 
Figure~\ref{fig:runningexample}.
\end{ex}

The case where $n$ is odd is similar, with a few small modifications.
If $\domtab \in \SYT(n,n,n)$ is a domino tableau, with $n$ odd, then the
entry $1$ is not part of a domino, and so the simple decomposition has 
a fourth type of block which occurs first and only occurs once.

\begin{center}
\begin{tabular}{c}
  \includegraphics[scale=0.25]{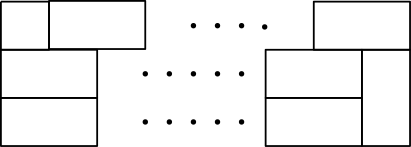}  
\\
  Type 0 
\end{tabular}
\end{center}


\begin{algo}
\label{alg:fw2-odd}
{\it The crossed web $\webcross_\domtab$ of a domino tableau $\domtab$ (odd case).}

\medskip

\noindent{\tt Input:} A domino tableau $\domtab \in \SYT(n,n,n)$, where $n$ is odd.

\medskip

\noindent{\tt Step 1:}
Let $N := 3n$, $M := \tfrac{N-1}{2}$.
First subtract $1$ from all entries of $\domtab$.
Then replace the domino labels as described in Step 1 of 
Algorithm~\ref{alg:fw2-even} to produce $\rdomtab$. 

\medskip

\noindent{\tt Step 2:}
Decompose $\domtab$ into simple blocks.  
For each type 1 and type 2 block we have a vertical domino pair
as in Algorithm~\ref{alg:fw2-even}.  In addition, we assign
the pair $(k,0)$ to the type 0 block, where $k$ is the unique 
vertical domino in this block.  For the purpose of Step 3, we regard 
$k$ as a second vertical domino.
Define $V$ as the set of all such pairs.

\medskip

\noindent{\tt Step 3:}
Construct $C$ as in Step 3 of 
Algorithm~\ref{alg:fw2-even}, except that the shape of $C$ will be 
$(\tfrac{n+1}{2},\tfrac{n+1}{2},\tfrac{n+1}{2})/(1,1)$.
Also define $C_0$ to be the skew tableau of shape
$(\tfrac{n+1}{2},\tfrac{n+1}{2},\tfrac{n+1}{2})/(1)$, obtained
by adding an entry $0$ to $C$ in row $2$, column $1$.

\medskip

\noindent{\tt Step 4:}
We draw the reflected m-diagram of $C$ as follows.
The $3n$ vertices on the boundary line will be labelled
$M', \dots, 2',1',0,1,2, \dots, M$ from left
to right.  Draw first arcs using the first two rows of $C$
on vertices $1, \dots, M$; these are oriented left to right.  
Draw the second arcs using the second two 
rows of $C_0$ on vertices $0, 1, \dots, M$; 
these are oriented right to left.
Finally, reflect every arc to the left so 
that each vertex $k \in \{1, \dots, M\}$ is reflected onto $k'$,
and $0= 0'$ is its own reflection.

\medskip


\noindent{\tt Steps 5 and 6:}
Construct the crossed m-diagram $\mdiagcross_\domtab$ and the crossed web
$\webcross_\domtab$ 
exactly as in Steps 5 and 6 of Algorithm~\ref{alg:fw2-even}.

\medskip

\noindent{\tt Output:} $\webcross_\domtab$

%
%
\end{algo}

\begin{ex}
We apply Algorithm~\ref{alg:fw2-odd} to the tableau
\[
   \domtab = \begin{young}[c]
1 & ]=2 & =]3  \\
]=]4\ynobottom & 6\ynobottom & 8\ynobottom \\
5\ynotop & 7\ynotop & 9\ynotop \\
\end{young}
\,.
\]
Steps 1--3 yield
\[
\rdomtab = \vc{\includegraphics[scale=0.3]{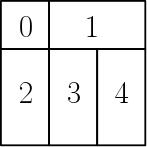}}
\qquad\quad
V = \big\{ (2,0) , (4,3)   \big\}
\qquad\quad
C =  \begin{young}[c] , & 1 \\ , & 3 \\ 2 & 4 \end{young}
\qquad\quad
C_0 =  \begin{young}[c] , & 1 \\ 0 & 3 \\ 2 & 4 \end{young}
\,.
\]
In Step 4, the reflected m-diagram of $C$ is
\[
\vc{\includegraphics[scale=0.5]{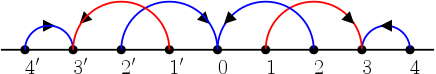}}
\]
and Steps 5 and 6 produce the diagrams $\mdiagcross_\domtab$ 
and $\webcross_\domtab$, 
shown below.
\[
\includegraphics[scale=0.45]{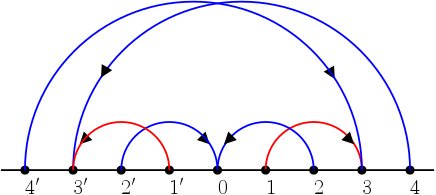}
\qquad
\includegraphics[scale=0.45]{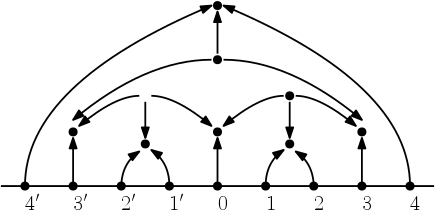}
\]
\end{ex}

\begin{remark}
Unlike an m-diagram, which is unique up to isotopy, there may be more
than one way to draw a crossed m-diagram.
However, Proposition~\ref{prop:independent-of-drawing} tells us that 
this does not matter ---
since the resolution is a $3$-web, every drawing of the crossed m-diagram 
resolves to the same $3$-web $\webcross_\domtab$.
\end{remark}


\section{Proof of  Theorem \ref{thm:2-by-n}} 

In this section, we prove Theorem~\ref{thm:2-by-n}.
Let $T$ be a $2\times n$ standard Young tableau.  
We will assume throughout that $T$ is rotationally symmetric (or equivalently
that its $2$-web $\web_T$ is symmetric).
As usual, $N = 2n$ is the number of boxes of $T$. 
We assume inductively, that the result is true for tableaux with fewer 
boxes than $T$.  It is enough to show that $\web_{\fold(T)}$ and 
$\fold(\web_T)$ have the same underlying graph.  
There are two cases.

\subsubsection*{Case 1: $\{1,\compl{1}\}$ is an arc of $\web_T$.}

Denote the remaining $n-1$ arcs of $\web_T$ by
$\{a_1, b_1\}, \dots, \{a_{n-1}, b_{n-1}\}$.
Then the arcs of $P(\web_T)$ are 
$\{a_1-1, b_1-1\},. \dots, \{a_{n-1}-1, b_{n-1}-1\},$ and $\{N-1,N\}$.
Therefore the entries $N-1$ and $N$ form a vertical domino in $P(T)$.

Let $T' = P(T)_{\leq N-2}$.  By the above, this is a $2 \times (n-1)$ 
rectangular
tableau; the corresponding $2$-web $\web_{T'}$ consists of arcs
$\{a_1-1, b_1-1\}, \dots, \{a_{n-1}-1, b_{n-1}-1\}$.  Since $\web_T$ is
symmetric, $\web_{T'}$ is symmetric. The arcs of $\fold(\web_T)$ are $\{N-1,N\}$
together with the arcs of $\fold(\web_{T'})$.

On the other hand,
\[
   \fold(T)  =  
   \begin{young}[c][6ex]
    ]= & \fold(T') & =] & {\begin{matrix} N{-}1 \\ N \end{matrix}}
   \end{young}\,,
\]
so the arcs of $\web_{\fold(T)}$ are $\{N-1,N\}$ together with the arcs
of $\web_{\fold(T')}$.  By inductive hypothesis, $\web_{\fold(T')} = \fold(\web_{T'})$,
so we conclude that $\fold(\web_T)$ and $\web_{\fold(T)}$ have the
same arcs.

\subsubsection*{Case 2: $\web_T$ has arcs $\{1, 2\ell\}$ and 
$\{\compl{1}, \compl{2\ell}\}$, $\ell \neq n$.}

In this case, $T$ has the form
\[
{
\yhwratio{1:2}
T = \begin{young}
U & V & U^\circ
\end{young}
}
\]
where
\[
U = \begin{young}[c]
a_1 & a_2 & ]= & \dots & =] & ]=] a_\ell \\
b_1 & b_2 & ]= & \dots & =] & ]=] b_\ell \\
\end{young}
\qquad\text{and}\qquad
U^\circ = \begin{young}[c]
\compl{b_\ell} &  ]= & \dots & =] & ]=] \compl{b_2} & \compl{b_1} \\
\compl{a_\ell} &  ]= & \dots & =] & ]=] \compl{a_2} & \compl{a_1} \\
\end{young}
\]
are rotational complements of each other, and 
$V$ is rotationally symmetric.
The entries $a_1, \dots, a_\ell, b_1, \dots, b_\ell$ are the numbers 
$1, \dots, 2\ell$ in some order.  
$\web_T$ has arcs
$\{a_1, b_{\pi(1)}\}, \dots, \{a_\ell, b_{\pi(\ell)}\}$ coming from $U$,
$\{\compl{a_1}, \compl{b_{\pi(1)}}\}, \dots, \{\compl{a_\ell}, 
\compl{b_{\pi(\ell)}}\}$ coming from $U^\circ$, and $n-2\ell$ more arcs
coming from $V$.
Here $\pi$ is some permutation of $\{1, \dots, \ell\}$.

Let $T' = V-2\ell$. The arcs of $\fold(\web_T)$ are
$\{\compl{2a_i}+1, \compl{2b_{\pi(i)}}\}$ and
$\{\compl{2a_i}, \compl{2b_{\pi(i)}}+1\}$, $i=1, \dots \ell$, together
with the arcs of $\fold(\web_{T'})$.

\begin{lemma}
\label{lem:row-intact}
$P^{2\ell}(T) +2\ell = 
{\yhwratio{1:3}
\begin{young}
V  & U^{\circ} & U{+}N
\end{young}}\,.$  
Furthermore, for $1 \leq j \leq 2\ell$, $P^j(T) + j$ is of the form
\[
   \begin{young}[c]
]= & & & & & =]
&  \compl{b_\ell} &  &  \dots &  & =] \compl{b_1}
& & =]
\\
]= & & & =]
&  \compl{a_\ell} &  &  \dots &  & =] \compl{a_1}
& ]= & & & =]
\end{young}
\,,
\]
i.e., each row of $U^\circ$ is a subtableau of $P^j(T) + j$.
\end{lemma}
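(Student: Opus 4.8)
The plan is to prove the two assertions in turn, each by reducing to jeu-de-taquin rectification via Lemma~\ref{promotion-rectification}.

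\emph{First assertion.} I would apply Lemma~\ref{promotion-rectification} with $k=2\ell$, in both directions. Since the entries $1,\dots,2\ell$ of $T$ fill exactly the $2\times\ell$ sub-rectangle $U$ in the first $\ell$ columns, the skew tableau $T_{>2\ell}$ is the solid $2\times(n-\ell)$ rectangle formed by $V$ followed by $U^\circ$, occupying columns $\ell+1,\dots,n$. Rectifying a translated rectangle is merely a translation, so the forward direction gives $P^{2\ell}(T)_{\le N-2\ell}=\rect(T_{>2\ell}-2\ell)$, which is the straight-shape tableau of shape $(n-\ell,n-\ell)$ consisting of $V-2\ell$ followed by $U^\circ-2\ell$. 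The backward direction gives $T_{\le 2\ell}=\rect\big(P^{2\ell}(T)_{>N-2\ell}-(N-2\ell)\big)$, and $T_{\le 2\ell}=U$ is already a straight $2\times\ell$ rectangle. But the shape of $P^{2\ell}(T)_{>N-2\ell}$ is the complement in $(n,n)$ of the shape of $P^{2\ell}(T)_{\le N-2\ell}$, namely $(n,n)/(n-\ell,n-\ell)$, which is again a translated $2\times\ell$ rectangle; hence a skew tableau of this shape whose rectification is $U$ can only be $U+(N-2\ell)$ placed in columns $n-\ell+1,\dots,n$. Concatenating the two halves and adding $2\ell$ to every entry then yields the claimed identity for $P^{2\ell}(T)+2\ell$.

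\emph{Furthermore part.} Fix $j$ with $1\le j\le 2\ell$ (the case $j=0$ being trivial). By Lemma~\ref{promotion-rectification}, $P^j(T)_{\le N-j}=\rect(T_{>j})-j$, and the boxes that held $U^\circ$ in $T$ all survive the $j$ promotions and lie in this subtableau, so it suffices to locate the entries of $U^\circ$ inside $\rect(T_{>j})$. Since the entries $1,\dots,j$ form a Young subdiagram $\mu_j$ of the $2\times\ell$ rectangle $U$, the skew tableau $T_{>j}$ is the column-concatenation of two pieces: a skew tableau $A$ of shape $(\ell,\ell)/\mu_j$ in columns $1,\dots,\ell$, with entries $\le 2\ell$, followed by the solid $2\times(n-\ell)$ rectangle $B$ formed by $V$ and $U^\circ$ in columns $\ell+1,\dots,n$, with entries $>2\ell$. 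Every entry of $B$ exceeds every entry of $A$. I would invoke the standard jeu-de-taquin fact that the rectification of such a concatenation is obtained by rectifying $A$ and then translating each row of the rectangle $B$ leftward so as to abut the corresponding row of $\rect(A)$. Writing $\nu_j$ for the (two-row, straight) shape of $\rect(A)$, this says that in $\rect(T_{>j})$ --- hence in $P^j(T)-j$, and hence in $P^j(T)+j$, since adding a constant to all entries moves no boxes --- the first row of $U^\circ$ (the entries $\compl{b_\ell}<\cdots<\compl{b_1}$) occupies consecutive boxes in row $1$, the second row of $U^\circ$ (the entries $\compl{a_\ell}<\cdots<\compl{a_1}$) occupies consecutive boxes in row $2$, and the row-$2$ block lies $\nu_j^{(1)}-\nu_j^{(2)}$ columns to the left of the row-$1$ block. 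This is precisely the asserted form.

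The step I expect to be the main obstacle is the jeu-de-taquin fact used in the second part: that rectifying a column-concatenation $A\,B$, where $B$ is a solid rectangle whose entries all exceed those of $A$, simply nests each row of $B$ against the corresponding row of $\rect(A)$. This is a manifestation of the predictable behaviour of the plactic product by a rectangular tableau; I expect the cleanest proof to be a short induction on the number of columns of $B$, peeling columns off $B$ one at a time and checking that a full two-box column of large entries slides to the left as a rigid unit. Everything else is routine bookkeeping with Lemma~\ref{promotion-rectification} together with the triviality of rectifying a translated rectangle.
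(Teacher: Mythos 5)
Your handling of the first assertion is correct and is essentially the paper's argument: apply Lemma~\ref{promotion-rectification} with $k=2\ell$ in both directions and use that a skew tableau whose shape is a horizontally translated rectangle rectifies by pure translation. For the ``furthermore'' part, however, you take a different route from the paper, and that is where the gap lies. The paper never computes $\rect(T_{>j})$: it deduces the second statement from the first by monotonicity. Jeu de taquin slides move entries only up or left, so the row index of any fixed entry is weakly decreasing along $T, P(T)+1, P^2(T)+2,\dots$; since each $\compl{a_i}$ is in row $2$ of $T$ and, by the first statement, still in row $2$ of $P^{2\ell}(T)+2\ell$, it sits in row $2$ of every intermediate $P^j(T)+j$, while each $\compl{b_i}$ never leaves row $1$. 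Every other entry of $P^j(T)+j$ is either smaller than all entries of $U^\circ$ (entries coming from $U$ or $V$) or larger (the appended entries exceed $N$), so within each row the entries of $U^\circ$ must occupy consecutive boxes, which is the assertion.

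Your route instead rests entirely on the ``standard jeu-de-taquin fact'' that rectifying the concatenation $A\,B$, with $B$ a solid two-row rectangle of large entries placed to the right of $A$, simply abuts each row of $B$ against the corresponding row of $\rect(A)$. That statement is in fact true, but it is not a quotable standard, it carries the whole weight of your proof, and the mechanism you propose for proving it fails: a full column of $B$ does \emph{not} slide to the left as a rigid unit. For example, let $A$ have skew shape $(2,2)/(2)$ with entries $1,2$ in row $2$, and let $B$ be the single column with $3$ above $4$ placed in column $3$. Then $\rect(A)$ is the one-row tableau $1\,2$, and rectifying the concatenation yields row $1$ equal to $1\,2\,3$ and row $2$ equal to $4$: the entries $3$ and $4$ end up in columns $3$ and $1$ respectively, so the column of $B$ is torn apart (even though the row-abutment conclusion holds). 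Note also that ``peeling off the last column of $B$'' is not a restriction to an initial interval of values (the top entry of the last column of a $2\times m$ rectangle need not be its second-largest entry), so the induction cannot be run via Lemma~\ref{promotion-rectification}-style restrictions without restructuring. The fact can be proved honestly --- e.g., restricting $\rect(A\,B)$ to the values of $A$ gives $\rect(A)$ of some shape $\nu$, and in two rows the only skew shape over $\nu$ admitting a filling that rectifies to a $2\times m$ rectangle is $(\nu_1+m,\nu_2+m)$, with the row-abutment filling as the unique such filling --- but as written your proposal defers its crux to an unproved claim supported by an incorrect verification step, whereas the paper's two-line monotonicity argument avoids the issue entirely.
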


\begin{proof}
The first statement follows from Lemma \ref{promotion-rectification}. 
The second follows from the first, since none of the entries $\compl{a_i}$
can slide upward in computing any of the intermediate tableaux $P^j(T)+j$.
\end{proof}

From the first statement it follows that
\[
   \fold(T) = 
{\yhwratio{1:3}
\begin{young}
\fold(T')  & S
\end{young}}
\] for some $2 \times 2\ell$ tableau $S$.  
From the second statement, taking $j=a_i$, 
we see that $\compl{2a_i} = \compl{a_i}-j$ 
is in the second row of $P^j(T)$.  
Lemma~\ref{lem:promotion-folding} implies that $\compl{2a_i}$ is 
in the second row of $\fold(T)$.  Similarly, taking $j=b_i$, we find that
$\compl{2b_i}$ is the first
row of $\fold(T)$.  Since $\fold(T)$ is a domino tableau the only
possibility for $S$ is
\[
S = 
\text{\small$
\begin{young}[7ex][c]
\compl{2b_\ell} & \compl{2b_\ell}{+}1  & ]= & \dots & =] &
]=] \compl{2b_2} & \compl{2b_2}{+}1  
&\compl{2b_1} & \compl{2b_1}{+}1   \\
\compl{2a_\ell} & \compl{2a_\ell}{+}1  & ]= & \dots & =] &
]=] \compl{2a_2} & \compl{2a_2}{+}1  
&\compl{2a_1} & \compl{2a_1}{+}1   \\
\end{young}
$}
\,.
\]
Hence the arcs of $\web_{\fold(T)}$ are
$\{\compl{2a_i}+1, \compl{2b_{\pi(i)}}\}$ and
$\{\compl{2a_i}, \compl{2b_{\pi(i)}}+1\}$, $i=1, \dots \ell$, together
with the arcs of $\web_{\fold(T')}$.  Again, by the inductive hypothesis,
$\web_{\fold(T')} = \fold(\web_{T'})$, and we conclude that 
$\fold(\web_T)$ and  $\web_{\fold(T)}$ have the same arcs. \qed


\section{Proof of Theorem \ref{thm:fw1}} 

In this section, we prove Theorem~\ref{thm:fw1}.
Let $\web$ be a symmetrical web, and $T_\web$ be the corresponding tableau.
As in Algorithm~\ref{alg:fw1}, let $h_j = \webdist(B_j, B_{N-j})$,
and also define 
\[
g_j = \webdist(B_j, B_{N-j+1})
= \webdist(B_{j-1}, B_{N-j}).
\]
Here, the second equality is from the symmetry of $\web$.

Let $u = u_1 u_2 \dots u_N$ be the row-index word of $F(T_\web)$.  We need
to show that $u = w$, where $w$ is the word defined by 
Algorithm~\ref{alg:fw1}.

\begin{lemma}
For $1 \leq j \leq \fhalfN$,
we have 
\[
   u_{\compl{2j}} = \Phi(h_j-g_j) 
\qquad\text{and}\qquad
  u_{\compl{2j}+1} = \Phi(g_j-h_{j-1})
\,,
\]
where $\Phi$ is the function defined in Algorithm~\ref{alg:tw1}.
\end{lemma}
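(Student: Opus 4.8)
The plan is to relate the row-index word $u$ of $F(T_\web)$ to web-distances by first understanding which boundary face $P^j(T_\web)$ "sees" at the relevant moment, and then invoking the two folding lemmas from Section~2. Write $T = T_\web$. By Algorithm~\ref{alg:tw1} applied to $P^j(\web)$, the boundary-face distances $\webdist(B_0, B_i)$ computed in the rotated web give the row-index word of $P^j(T)$; equivalently, for the original web $\web$, the quantity $\webdist(B_j, B_{j+i}) - \webdist(B_j, B_{j+i-1})$ reads off the $i$-th letter of $w(P^j(T))$ (indices mod $N$), since cyclic rotation of the web corresponds to promotion of the tableau and translates $B_0$ to $B_j$. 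The key point is then to identify the boxes of $\fold(T)$ carrying $\compl{2j}$ and $\compl{2j}+1$ with their positions in a promoted tableau. Lemma~\ref{lem:promotion-folding} says the box of $\compl{2j}$ in $\fold(T)$ agrees with its box in $P^j(T)$, and Lemma~\ref{lem:promotion-folding-symmetric} (using that $T$ is self-evacuating, which holds since $\web$ is symmetrical) says the box of $\compl{2j}+1$ in $\fold(T)$ agrees with its box in $P^{j-1}(T)$.

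So $u_{\compl{2j}}$ is the row of the box containing $\compl{2j}$ in $P^j(T)$. In $w(P^j(T))$, the entry $\compl{2j}$ sits at position $\compl{2j} - j \pmod N$; one checks that this position is $N - j = \compl{j} + 1$ shifted appropriately, and that the corresponding difference of web-distances from $B_j$ is exactly $\webdist(B_j, B_{N-j}) - \webdist(B_j, B_{N-j+1}) = h_j - g_j$. Hence $u_{\compl{2j}} = \Phi(h_j - g_j)$. Similarly, $u_{\compl{2j}+1}$ is the row of the box containing $\compl{2j}+1$ in $P^{j-1}(T)$; the entry $\compl{2j}+1 = \compl{2j-1}$ sits at position $\compl{2j-1} - (j-1) \pmod N$ in $w(P^{j-1}(T))$, and the associated web-distance difference computed from $B_{j-1}$ is $\webdist(B_{j-1}, B_{N-j+1}) - \webdist(B_{j-1}, B_{N-j}) = g_j - h_{j-1}$, using the symmetry identity $g_j = \webdist(B_{j-1}, B_{N-j})$ recorded just before the lemma. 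This gives $u_{\compl{2j}+1} = \Phi(g_j - h_{j-1})$.

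The main obstacle I expect is bookkeeping: carefully matching the index $\compl{2j}$ (resp.\ $\compl{2j}+1$) in $\fold(T)$ to the correct position in $w(P^j(T))$ (resp.\ $w(P^{j-1}(T))$), and then matching that position to the correct pair of adjacent boundary faces $B_{N-j}, B_{N-j+1}$ (resp.\ $B_{N-j}, B_{N-j-1}$) after accounting for the cyclic shift by $j$ (resp.\ $j-1$). One must also verify that the letter at position $p$ in a row-index word $w(P^j(T))$ is indeed $\Phi$ applied to the difference of consecutive $\webdist(B_j, \cdot)$ values — this is just Algorithm~\ref{alg:tw1} applied to $P^j(\web) = \web_{P^j(T)}$, combined with the fact that $d_{i-1} - d_i \in \{-1,0,1\}$ records whether position $i$ lies in row $1$, $2$, or $3$. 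Once the index arithmetic is pinned down and the symmetry relation $g_j = \webdist(B_{j-1}, B_{N-j})$ is used to rewrite the second distance difference, the identities fall out; the rest of the section will then combine the two formulas with the definition of $\Lambda$ in Algorithm~\ref{alg:fw1} to conclude $u = w$, i.e., $\domtab_\web = \fold(T_\web)$.
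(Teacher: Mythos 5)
Your proposal is correct and follows essentially the same route as the paper: identify the box of $\compl{2j}$ (resp.\ $\compl{2j}+1$) in $\fold(T_\web)$ with its box in $P^j(T_\web)$ (resp.\ $P^{j-1}(T_\web)$) via Lemmas~\ref{lem:promotion-folding} and~\ref{lem:promotion-folding-symmetric}, then read off its row by applying Algorithm~\ref{alg:tw1} to the rotated web $P^j(\web)$ (resp.\ $P^{j-1}(\web)$) and translating boundary faces back to $\web$ through the cyclic shift, which yields exactly $\Phi(h_j-g_j)$ and, using $g_j=\webdist(B_{j-1},B_{N-j})$, $\Phi(g_j-h_{j-1})$. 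Only two bookkeeping slips need fixing, neither of which affects your final formulas: the entry $\compl{2j}$ is read off at position $\compl{2j}$ of $w(P^j(T_\web))$ (only the face labels shift by $j$, not the position in the row-index word), and the second difference should be written $\webdist(B_{j-1},B_{N-j})-\webdist(B_{j-1},B_{N-j+1})$, since Algorithm~\ref{alg:tw1} subtracts the distance to the later face from the distance to the earlier one.
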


\begin{proof}
Let $\web' = P^j(\web)$; denote its internal boundary faces by 
$B_1', B_2', \dots, B_{N-1}'$, and write $B_0' = B_N'$ for the 
outer face.
Since $P^j(\web)$ is just $\web$ cyclically rotated $j$ times,
each boundary face $B_k'$ of $\web'$ is isomorphically identified with 
boundary face $B_{k'}$ of $\web$, where $k' = k+j \pmod N$.  
In particular, we have the following identifications:
\[
B_0'  \leftrightarrow B_j 
\qquad \qquad
B_{\compl{2j}-1}'  \leftrightarrow B_{N-j} 
\qquad \qquad
B_{\compl{2j}}'  \leftrightarrow B_{N-j+1} 
\]

By Lemma~\ref{lem:promotion-folding}, the position of entry $\compl{2j}$ in
$\fold(T_\web)$ is the same as its position in 
$P^j(T_\web) = T_{P^j(\web)}$.  The row which contains any given entry
of $T_{P^j(\web)}$ can be determined from $P^j(\web)$, using 
Algorithm~\ref{alg:tw1}.  In particular, the algorithm asserts that
$\compl{2j}$ is in row 
\[
\Phi\left(
\webdist(B_0',B_{\compl{2j}-1}') - \webdist(B_0',B_{\compl{2j}}')\right)
\,.
\]
From the identifications above, we see that this is precisely $\Phi(h_j - g_j)$.

A similar argument, using Lemma~\ref{lem:promotion-folding-symmetric},
shows that $\compl{2j}+1$ is in row $\Phi(g_j - h_{j-1})$ of $F(T_\web)$.
\end{proof}

Now, since entries $\compl{2j}$ and $\compl{2j}+1$ form a domino in 
$\fold(T_\web)$,
there are five possible values for the pair 
$(u_{\compl{2j}}, u_{\compl{2j}+1})$:
\[
   (1,1),\ (1,2),\ (2,2),\ (2,3),\ \text{or }(3,3)
\,.
\]
For each possibility, the following holds:
\begin{align*}
   \Lambda^{-1}(u_{\compl{2j}}, u_{\compl{2j}+1}) 
  &= 
   \Phi^{-1}(u_{\compl{2j}}) + \Phi^{-1}(u_{\compl{2j}+1}) \\
  &= 
   (h_j-g_j) + (g_j - h_{j-1}) \\
  &= 
   h_j- h_{j-1}
\,.
\end{align*}
Thus 
$(u_{\compl{2j}}, u_{\compl{2j}+1}) = \Lambda(h_j-h_{j-1}) = 
(w_{\compl{2j}}, w_{\compl{2j}+1})$, for $1 \leq j \leq \fhalfN$.  
In the case where $N$ is odd, we also have $u_1 = w_1 = 1$.  Therefore,
$u = w$, as required.
\qed


\section{Proof of Theorem~\ref{thm:fw2}}


In this section, we prove Theorem~\ref{thm:fw2}.  Since the odd and even cases are conceptually very similar, but with a number of minor differences in details, we will assume here that $n$ even.  The details of the odd case are left as an exercise to the reader.  Throughout, we assume that $\domtab$, $\rdomtab$, $V$, $C$, $\mdiagcross_\domtab$, $\webcross_\domtab$, $M$, $N$, etc. are as in the statement of Algorithm~\ref{alg:fw2-even}.  

In {\tt Step 5} of the algorithm, we asserted that if $ (k_1, k_2) \in V$, then $(k_1, k_2)$ forms an arc in the m-diagram of $C$. We begin by establishing this claim and other properties of these special arcs.

\begin{lemma} \label{lemma:max-arc}
 Suppose $ (k_1, k_2) \in V$.  Then $ (k_1, k_2) $ is an arc in $\mdiag_\compresstab$. No two such arcs intersect.  Moreover, if $ (k_1, k_2)$ is a first arc in $\mdiag_\compresstab$, then it is a maximal first arc, i.e. there are no first arcs above it; if $(k_1,k_2)$ is a second arc in $\mdiag_\compresstab$, then it is a maximal second arc, i.e. there are no second arcs above it.
 \end{lemma}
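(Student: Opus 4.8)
The plan is to analyze directly how the construction in Steps 1--3 of Algorithm~\ref{alg:fw2-even} produces $\rdomtab$ and $C$ from a domino tableau $\domtab$, and to trace a vertical domino pair $(k_1,k_2)\in V$ through the compression. First I would unpack what a simple block of type 1 or type 2 looks like after Step 1: the block has two vertical dominoes occupying rows $1,2$ and rows $2,3$ (in the appropriate columns), plus one horizontal domino in row $3$ or row $1$; after relabeling these become single labels $k_1,k_2$ (and one more label $m$ for the horizontal domino). The key point is the local shape: in a type 1 block $C$ has $v$-values $1,2$ at positions $k_1 < k_2$ and value $3$ at the position of the horizontal domino, so in $C$ the label $k_1$ is in row $1$ and $k_2$ is in row $2$; since these are consecutive entries in the first two rows of $C$ with $k_1$ in row $1$, $k_2$ in row $2$, and (because the block is \emph{simple}, i.e. indecomposable) nothing between them in those two rows, $(k_1,k_2)$ is a first arc of $\mdiag_\compresstab$. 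Symmetrically, for a type 2 block, $k_1 > k_2$, $k_2$ labels a domino in rows $2,3$ as a first vertical domino so $v_{k_2}=2$, $k_1$ as a second vertical domino gives $v_{k_1}=3$, and $(k_1,k_2)$ is a second arc (recall second arcs come from rows $2$ and $3$ and point right-to-left, so the arc joins the larger-indexed vertex $k_1$ in row $3$ to the smaller $k_2$ in row $2$). This establishes the first assertion: every $(k_1,k_2)\in V$ is an arc in $\mdiag_\compresstab$.

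Next I would prove the non-intersection and maximality statements together, since they both come from the same structural fact: a simple block decomposition of a domino tableau gives a decomposition of the \emph{columns} of $C$ into consecutive groups, and within each group the $v$-pattern is forced. For maximality of a first arc $(k_1,k_2)$: an arc $(a,b)$ above $(k_1,k_2)$ in the m-diagram would be a first arc with $a < k_1 < k_2 < b$ and $a$ in row $1$, $b$ in row $2$ of $C$. But the column of $C$ immediately to the left of $k_1$'s column is the last column of the \emph{previous} block; I would argue that the previous block's contribution ends with an entry in row $3$ (types 1, 2, 3 all end their last column with a row-$3$ entry after relabeling — for type 1 the trailing horizontal domino is in row $3$, for type 2 the second vertical domino hits row $3$, for type 3 all dominoes are horizontal and the bottom one is row $3$). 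Hence the entry of $C$ in row $1$ immediately preceding $k_1$ belongs to an earlier block and is smaller than every entry of the $(k_1,k_2)$-block; so no first arc can reach left of $k_1$ and over it. The same argument on the right side (using that the first column of the \emph{next} block starts in row $1$) shows no first arc reaches right of $k_2$ over the arc. Non-intersection of two arcs from $V$ then follows: two first arcs from $V$ lie in distinct blocks, and the column-interval argument shows the blocks' column-intervals are disjoint and one cannot nest over the other without violating the ``row-$3$ boundary'' just established; similarly for two second arcs, and a first arc and a second arc cannot cross because one lives in rows $1,2$ and the other in rows $2,3$ of $C$, and within a single block the vertical dominoes are nested rather than crossing — more precisely their vertex sets $\{k_1,k_2\}$ in different blocks are separated, and in the same block the two arcs share the middle-row structure in a non-crossing way by the definition of a simple block.

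The main obstacle I anticipate is pinning down precisely the claim that the column-intervals of the simple blocks interact with the rows of $C$ in the rigid way described — i.e., that each block's first column starts in row $1$ of $C$ and its last column ends in row $3$ of $C$, with no ``leakage.'' This requires a careful case check over the three (or four, in the odd case) block types and an argument that the simple-block decomposition of a $3\times n$ domino tableau is genuinely a left-to-right concatenation with no vertical overlap between blocks; I would isolate this as a preliminary observation (perhaps stated explicitly before the lemma, or proved inline) and then the arc statements follow almost formally. Everything else is bookkeeping about which rows the relabeled entries occupy, which is routine given the pictures of the block types.
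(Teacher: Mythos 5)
Your overall plan---trace a vertical pair through Steps 1--3 and argue via the simple-block decomposition---is the same as the paper's, but your execution rests on a misreading of what a simple block of type 1 or 2 is, and this breaks your proof of the first assertion. The pictures of the block types omit an \emph{arbitrary} number of horizontal dominoes: a type 1 block consists of a vertical domino $k_1$, then $\ell$ horizontal dominoes in row 1 and $\ell$ in row 2, then a vertical domino $k_2$, with $\ell+1$ horizontal dominoes in row 3 (and dually for type 2); it is not ``two vertical dominoes plus one horizontal domino.'' Consequently your justification that $(k_1,k_2)$ is an arc---that $k_1,k_2$ are consecutive in the top two rows of $\compresstab$ with ``nothing between them'' because the block is simple---is false whenever $\ell\geq 1$: there are $2\ell$ labels of $\compresstab$ strictly between $k_1$ and $k_2$ in rows 1 and 2. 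To get the arc you must argue that these intermediate labels pair off among themselves in the non-crossing matching (they satisfy the ballot condition because, inside the block, the $i$-th row-1 horizontal sits above the $i$-th row-2 horizontal), so that $k_1$ is matched to $k_2$ as the outermost arc of the block. This is precisely what the paper does, by writing down the explicit form of the top two rows of $\rdomtab$, and hence of $\compresstab$: all top-two-row labels left of the block are $<k_1$, all those to the right are $>k_2$, and those in between lie strictly between $k_1$ and $k_2$.

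The maximality and non-intersection arguments also need repair. Knowing that the row-1 entry of $\compresstab$ immediately left of $k_1$ comes from an earlier block and is small does not, by itself, exclude a first arc $(a,b)$ with $a<k_1<k_2<b$; what excludes it is that each block compresses to a \emph{rectangle}, so the top-two-row contribution of every earlier block is a balanced segment whose openers are matched before $k_1$, combined with the value separation above. (Also, ``smaller than every entry of the $(k_1,k_2)$-block'' is false once row-3 entries are counted: in the paper's running example the first block contains the label $6$ while later blocks contain $3,4,5$; the statement is only true restricted to the top two rows.) Finally, your claim that a first arc and a second arc cannot cross ``because one lives in rows $1,2$ and the other in rows $2,3$'' is wrong in general---crossings between first and second arcs are exactly what m-diagrams have and what the resolution step removes. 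The correct, and much shorter, reason that no two arcs of $V$ intersect is the observation you make only in passing: distinct pairs come from distinct blocks, and every vertical domino of an earlier block has a smaller label than every vertical domino of a later block (compare their row-2 cells), so the two pairs occupy disjoint intervals and the corresponding arcs neither cross nor share an endpoint.
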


\begin{proof}
Without loss of generality, suppose $(k_1, k_2)$ originates from a simple block of type 1. Compressing the simple blocks of $\domtab$ using {\tt Step 3} of Algorithm~\ref{alg:fw2-even} yields rectangular tableaux whose concatenation form $\compresstab$. Moreover, $k$ labels a domino on the top (resp. bottom) two rows of $\rdomtab$ if and only if  $k$ is on the top (resp. bottom) two rows of $\compresstab$. By our assumption, the top two rows of $\rdomtab$ has the form 
\[ \vc{\includegraphics[scale=0.3]{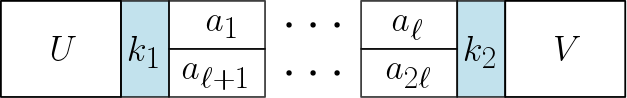}}\]  
where $u<k_1$ for all domino labels $u$ in $U$, $v>k_2$ for all domino labels $v$ in $V$, and $k_1 < a_1, \cdots , a_{2\ell} < k_2$ for all domino labels in between. 
It follows that the top two rows of $\compresstab$ has the form 
\[ \vc{\includegraphics[scale=0.3]{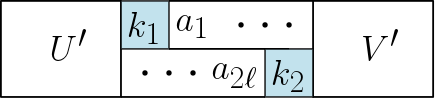}}\]  
where $u'<k_1$ for all $u'$ in $U'$ and $v' >k_2$ for all $v'$ in $V'$.  This implies that $(k_1, k_2)$ forms a maximal first arc in $\mdiag_C$.  

For distinct vertical domino pairs $ (k_1, k_2),\; (l_1, l_2) \in V$, either $\max{k_1,k_2} < \min{l_1, l_2}$ or $\max{l_1, l_2}<\min{k_1, k_2}$. Therefore, arcs in $\mdiag_C$ from vertical domino pairs do not intersect.  
\end{proof}

Next, we consider which pairs of arcs can intersect in $\mdiagcross_\domtab$.
We remind the reader that, by ``intersect'', we mean that the arcs cross 
or have a common endpoint.

Suppose $\alpha = (a,b)$ and $\beta = (c,d)$ are two arcs
in an arc diagram that intersect.  We will say that
$\alpha$ is \emph{directed toward} $\beta$ if $a = \min{a,b,c,d}$ or
$a = \max{a,b,c,d}$.  Similarly $\alpha$ is \emph{directed away from} $\beta$
if $b = \min{a,b,c,d}$ or $b= \max{a,b,c,d}$.

We classify arcs in 
$\mdiagcross_\domtab$
as either first arcs or second arcs according to which type of arc they
are derived from the diagram $\mdiag_C$.  If $(k_1, k_2)$ is a first
arc of $\mdiag_C$ then any primed variants of this arc 
in $\mdiagcross_\domtab$ --- i.e.
$(k_1, k_2)$, $(k_1',k_2')$, $(k_1', k_2)$ or $(k_1,k_2')$ ---
will also be called \emph{first arcs}.
Similarly, if $(k_1, k_2)$ is a second arc of $\mdiag_C$, then any of these
variants are \emph{second arcs} 
$\mdiagcross_\domtab$
If we have two arcs which are both first arcs or both second arcs, we will 
say that they have the same \emph{type}.
Arcs of the form $(k_1, k_2)$ or $(k_1', k_2')$ are called
\emph{uncrossed arcs}, and arcs of the form $(k_1', k_2)$ or $(k_1, k_2')$ 
are called \emph{crossed arcs}.

\begin{lemma}
\label{lem:crossing-arcs}
If two arcs in $\mdiagcross_\domtab$ intersect,
then one of the following must be true.
\begin{enumerate}
\item[(a)] 
Both arcs are uncrossed arcs, and they are opposite type.
\item[(b)]
Both arcs are crossed arcs, and they are reflections of each other.
\item[(c)]
One arc is a crossed first arc, the other is an 
uncrossed second arc, and the latter is directed toward the former.
\item[(d)]
One arc is a crossed second arc, the other is an uncrossed first arc,
and the latter is directed away from the former.
\end{enumerate}
\end{lemma}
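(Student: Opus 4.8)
The plan is to analyze intersections in $\mdiagcross_\domtab$ by cases according to which combination of crossed/uncrossed and first/second arcs is involved, using two structural facts: first, that the m-diagram $\mdiag_C$ of a tableau has no two first arcs crossing and no two second arcs crossing (this is the defining property of the $2$-web bijection applied to each pair of rows of $C$), so all crossings in $\mdiag_C$ are between a first arc and a second arc; and second, Lemma~\ref{lemma:max-arc}, which tells us that the arcs $(k_1,k_2)\in V$ are \emph{maximal} arcs of their type in $\mdiag_C$ and are pairwise non-intersecting. The reflection $k\mapsto k'$ is an orientation-reversing symmetry, so the reflected m-diagram has all its ``primed'' crossings also between a first and a second arc, and a primed arc never crosses an unprimed arc in the reflected m-diagram of $C$ (the two halves live in disjoint intervals of the boundary line). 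The only new intersections introduced are those created in {\tt Step 5} when each pair $(k_1,k_2),(k_1',k_2')$ of uncrossed arcs is replaced by the crossed pair $(k_1,k_2'),(k_1',k_2)$.

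The key steps, in order: (1) Record that in $\mdiag_C$, two first arcs never intersect and two second arcs never intersect, so any intersection of same-type uncrossed arcs in $\mdiagcross_\domtab$ is impossible — this rules out same-type uncrossed pairs and forces case (a) to require opposite types. (2) Observe that an uncrossed arc of $\mdiagcross_\domtab$ lies entirely in $\{1,\dots,M\}$ or entirely in $\{1',\dots,M'\}$; two uncrossed arcs on opposite sides cannot intersect, and two on the same side intersect iff their unprimed preimages intersect in $\mdiag_C$, which by step (1) forces them to be opposite type — this is exactly (a). (3) For a crossed arc $(k_1,k_2')$ versus a crossed arc $(l_1,l_2')$: since these have one endpoint on each side of $0$, examine when they cross; using that the pairs in $V$ are nested-or-disjoint (Lemma~\ref{lemma:max-arc}) and the endpoints come in reflection-symmetric positions, show the only way two crossed arcs intersect is if $\{l_1,l_2\}=\{k_1,k_2\}$, i.e. they are the two reflection-paired crossed arcs $(k_1,k_2')$ and $(k_1',k_2)$ — this is (b). (4) For a crossed arc against an uncrossed arc: a crossed arc $(k_1,k_2')$ has its endpoints split across $0$, so to intersect an uncrossed arc $(a,b)$ with $a<b$ in $\{1,\dots,M\}$ we need $k_1$ to lie strictly between $a$ and $b$ while $k_2'$ lies outside $[a,b]$ (on the primed side), i.e. $a<k_1<b$. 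If the crossed arc is a first arc, then since $k_1$ is an endpoint of a maximal first arc, the arc $(a,b)$ straddling $k_1$ must be a second arc; and orientation bookkeeping (the first arc $(k_1,k_2)$ points left-to-right, so in the crossed form $(k_1,k_2')$ the endpoint $k_1$ is the source; the second arc points right-to-left) shows $(a,b)$ is directed toward the crossed arc — this is (c). Symmetrically, a crossed second arc against an uncrossed first arc gives (d). (5) Check that a crossed arc and its own reflection indeed always intersect as claimed, and that no crossed arc intersects an uncrossed arc on the primed side except in the mirrored version of (c)/(d) — these follow by applying the reflection symmetry to the previous step.

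The main obstacle I expect is step (4): carefully tracking the \emph{directions} of the arcs through the substitution in {\tt Step 5} and matching them to the ``directed toward''/``directed away from'' terminology of the statement. The geometry of which arc straddles which is straightforward from Lemma~\ref{lemma:max-arc} (maximality means no first arc sits above $(k_1,k_2)$, so the only arcs that can separate $k_1$ from $k_2$ in the crossed configuration are second arcs, and vice versa), but confirming that the orientation convention lands exactly on ``toward'' in case (c) and ``away from'' in case (d) — rather than the reverse — requires drawing out the local picture and is where a sign-type error would most easily creep in. A secondary subtlety is handling common-endpoint intersections (two arcs sharing a boundary vertex) on equal footing with genuine crossings; since every uncrossed arc and every crossed arc in $\mdiagcross_\domtab$ has both endpoints among $\{1,\dots,M,1',\dots,M'\}$ and distinct arcs of $\mdiag_C$ can share a vertex only at a degree-$2$ sink, these shared-endpoint cases must be slotted into (a)–(d) by the same type/direction analysis, which I would fold into steps (2)–(4) rather than treat separately.
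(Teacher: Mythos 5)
Your proposal follows essentially the same route as the paper's proof: same-type uncrossed pairs are ruled out because the first arcs and the second arcs of $\mdiag_C$ each form a $2$-web; same-type crossed-versus-uncrossed pairs are ruled out by the maximality statement of Lemma~\ref{lemma:max-arc}; and the orientation claims in (c) and (d) come down to the same endpoint bookkeeping (checking that the source, resp.\ target, of the uncrossed arc is extremal among the four endpoints is exactly what ``directed toward''/``directed away from'' means, and your analysis does land the right way around).

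The one step that does not go through as you justify it is your case of two crossed arcs. ``Nested-or-disjoint'' is all that the bare statement of Lemma~\ref{lemma:max-arc} (no two arcs coming from $V$ intersect) gives you, and it is not sufficient: if two vertical domino pairs $(k_1,k_2)$ and $(l_1,l_2)$ could be nested, then their crossed arcs, e.g.\ $(k_1,k_2')$ and $(l_1,l_2')$, would cross without being reflections of each other, and conclusion (b) would be violated. What you need is the stronger separation property that distinct vertical domino pairs satisfy $\max{k_1,k_2}<\min{l_1,l_2}$ or $\max{l_1,l_2}<\min{k_1,k_2}$; this holds because distinct pairs come from distinct simple blocks of $\domtab$, whose labels occupy disjoint intervals, and it is precisely the fact recorded and used in the proof of Lemma~\ref{lemma:max-arc}. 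With that substitution, your argument is complete and coincides with the paper's.
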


\begin{proof}
Since uncrossed first and second arcs each form their $2$-webs, uncrossed arcs intersect each other only when one is a first arc and the other is a second arc. While crossed arcs intersect their reflections, it is impossible for the crossed arcs from two different vertical domino pairs $(k_1, k_2)$ and  $(l_1, l_2) $ to intersect, because either $\max{k_1, k_2} < \min{l_1, l_2} $ or $\max{l_1, l_2} < \min{k_1, k_2}$. 

Now suppose one arc is crossed and the other arc is not crossed.
Since crossed arcs originate from maximal first or second arcs in $\mdiag_C$, 
it is impossible for both arcs to be of the same type.
Suppose a crossed first arc $(a', b)$ or $(a,b')$  intersects an uncrossed second arc of the form $(k_1, k_2) $ or $(k_1',k_2')$ where $k_1>k_2$. Then either $k_2 \le b \le k_1$ or $k_2 \le a \le k_1$, implying that the uncrossed second arc is directed toward the crossed first arc. Similarly, when a crossed second arc intersects an uncrossed first arc, the latter is directed away from the former.
\end{proof}

 \begin{lemma}
  $\webcross_\domtab$ is a $3$-web.  
 \end{lemma}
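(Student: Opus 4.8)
The plan is to verify the four defining conditions of Definition~\ref{def:3-web} for the resolution of $\mdiagcross_\domtab$. Conditions (1)--(3) are easy: after resolving, every boundary vertex becomes a source of degree $1$, the bipartite structure and vertex degrees (internal vertices of degree $3$, all sources or sinks) follow from the standard properties of resolving a generalized m-diagram exactly as in Algorithm~\ref{alg:tw2}, and all edges stay above the boundary line because the crossed m-diagram is drawn above it. So the entire content is condition (4): every internal face has at least $6$ sides. I would first recall the standard fact (implicit in Tymoczko's algorithm and in Proposition~\ref{prop:independent-of-drawing}) that an internal face of degree $4$ can only be created during resolution at a crossing of two arcs whose orientations are ``incompatible'' --- precisely the forbidden configuration ruled out in the proof of Proposition~\ref{prop:independent-of-drawing}. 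Faces of degree $4$ arise exactly from a ``bigon'' between two arc segments, i.e. a pair of arcs that cross at a point with a configuration that, after resolution, bounds a $4$-gon; one checks this happens iff the two crossing arcs are directed so that at the crossing they are ``co-oriented'' in the bad way.

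The heart of the argument is therefore to use Lemma~\ref{lem:crossing-arcs} to show that no such bad crossing occurs in $\mdiagcross_\domtab$. I would go through the four cases (a)--(d) of that lemma. In case (a), two uncrossed arcs of opposite type crossing: this is exactly the situation that already occurs in an honest m-diagram $\mdiag_C$ (a first arc crossing a second arc), and Tymoczko's algorithm guarantees its resolution has no small face --- so I can quote that this crossing is ``good''. In case (b), a crossed arc and its mirror image: here I would argue directly from the geometry near the axis of symmetry. A crossed arc $(k_1,k_2')$ and its reflection $(k_1',k_2)$ (coming from a vertical domino pair $(k_1,k_2)\in V$) cross once, on the axis; because $(k_1,k_2)$ was a \emph{maximal} first (or second) arc in $\mdiag_C$ by Lemma~\ref{lemma:max-arc}, nothing of the same type sits between them, and the orientations (both first arcs point the same direction, left-to-right, so the reflected pair point toward each other / away, consistently) are such that the resolution yields a hexagonal face, not a square. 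Cases (c) and (d) are the mixed crossed/uncrossed crossings; Lemma~\ref{lem:crossing-arcs} tells us the \emph{exact} orientation relationship (the uncrossed arc is directed toward, resp.\ away from, the crossed arc), and I would check that this is precisely the ``compatible'' orientation that resolves to a non-small face. In other words, the content of Lemma~\ref{lem:crossing-arcs} was engineered so that \emph{every} crossing in $\mdiagcross_\domtab$ has the good orientation.

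There is a second thing to rule out for condition (4): even if no single crossing creates a $4$-gon, an internal face could have degree $4$ for a global reason --- e.g. two arcs (or an arc and an intersection edge) bounding a bigon, or a small face formed by three mutually crossing arcs. For the first, I would note that in $\mdiag_C$ (an honest m-diagram of a genuine tableau $C$) no such configuration exists, so any new small face must involve a crossed arc; then by Lemma~\ref{lem:crossing-arcs} the crossed arc meets at most its own reflection (of the same type) and uncrossed arcs of opposite type in the controlled way described, and one checks no bigon closes up. For three mutually crossing arcs: by Lemma~\ref{lem:crossing-arcs}(a)--(d), among any two crossing arcs at least one is a crossed arc or they are opposite-type uncrossed arcs; a short case analysis (using that crossed arcs from distinct vertical domino pairs never cross, and a crossed arc crosses only its reflection among crossed arcs) shows three pairwise-crossing arcs cannot occur, or if a near-triangle forms its resolution still has $\ge 6$ sides. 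I expect this bookkeeping --- translating each of the four orientation patterns of Lemma~\ref{lem:crossing-arcs} into ``the local resolution picture has a hexagon, not a square,'' and then checking no small face assembles globally --- to be the main obstacle; everything else is routine. A clean way to organize it is to observe that $\mdiagcross_\domtab$ is obtained from the honest m-diagram $\mdiag_C$ (doubled and reflected, which is again honest) by the single local surgery of Step~5 replacing $\{(k_1,k_2),(k_1',k_2')\}$ with $\{(k_1,k_2'),(k_1',k_2)\}$; this surgery is exactly the ``doubling/unfolding'' move whose resolution was analyzed (in the $2$-row disguise) in the proof of Theorem~\ref{thm:2-by-n}, and it manifestly keeps all faces hexagonal, so condition (4) is preserved.
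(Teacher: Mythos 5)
Your setup is right --- conditions (1)--(3) are routine, everything hinges on condition (4), and Lemma~\ref{lem:crossing-arcs} is indeed the key tool --- but the heart of your argument rests on a wrong identification of where degree-$4$ faces can come from. You claim a $4$-gon arises from a single ``bad'' crossing, a bigon between two arc segments. This cannot happen: by Definition~\ref{def:arc-diagram} two arcs intersect at most once, so no two arcs bound an internal region by themselves, and resolving one crossing in isolation never produces a small internal face. (Your appeal to Proposition~\ref{prop:independent-of-drawing} also misreads it: the degree-$4$ danger there concerns the \emph{three} arcs of a Reidemeister~III move, not a single crossing.) The actual sources of internal faces with fewer than $6$ sides are regions of $\mdiagcross_\domtab$ bounded by three or by four pairwise-intersecting arcs, with specific orientation patterns determining whether the intersection edges created in the resolution land on the face (a triangular region resolves to a face of degree $3+m$ and a quadrilateral region to one of degree $4+m$, where $m$ counts corners whose intersection edge is incident to the face; bipartiteness then forces the bad outcomes to be exactly one $3$-arc pattern and one $4$-arc pattern). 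The proof has to enumerate these two bad configurations and rule each out using Lemma~\ref{lem:crossing-arcs}, which is what the paper does.

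Your treatment of the pieces that remain is also not sound as stated. For three pairwise-crossing arcs you suggest they ``cannot occur'' --- but they do occur in a crossed m-diagram: a crossed arc, its reflection, and an uncrossed arc of the opposite type crossing both (this is forced by Lemma~\ref{lem:crossing-arcs}), and the argument must check that the two resulting orientation patterns differ from the bad $3$-arc pattern, using the direction constraints in parts (c) and (d) of that lemma. The $4$-arc quadrilateral configuration is missing from your list of ``global reasons'' entirely; ruling it out requires its own argument (one of the four arcs must be crossed since $\mdiag_C$ alone never produces small faces; if a second bounding arc were crossed it would be the reflection, forcing the other two to be same-type uncrossed arcs that cross, a contradiction; otherwise two uncrossed arcs meet the crossed arc with incompatible directions, contradicting Lemma~\ref{lem:crossing-arcs}). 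Finally, the closing claim that the Step~5 surgery ``manifestly keeps all faces hexagonal,'' with a pointer to Theorem~\ref{thm:2-by-n}, is an assertion rather than a proof --- that theorem concerns crossingless $2$-webs and involves no resolution at all --- so it cannot replace the missing case analysis.
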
 

 \begin{proof}
   The resolution of the crossed m-diagram in {\tt Step 6} gives a bipartite directed graph in the plane satisfying conditions (1)--(3) in the definition of a $3$-web (Definition~\ref{def:3-web}).  It remains to check that every internal face of $\webcross_\domtab$ has at least $6$ sides.

   All the internal faces of $ \webcross_\domtab$ have an even number of sides 
because the graph is bipartite.
Internal faces in $ \webcross_\domtab$ originate from regions bounded by intersecting arcs in the crossed m-diagram. Since two arcs can intersect at most once, the only possible way to form an internal face with less than $6$ sides is   to resolve a region bounded by $3$ or $4$ intersecting arcs. Enumerating all possibilities shows that the following two cases resolve into faces with $4$ sides. 
\begin{center}
\begin{tabular}{cc}
~~~~~~~~~~~~\includegraphics[scale=0.3]{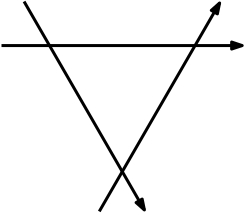}~~~~~~~~~~~~  &
~~~~~~~~~~~~\includegraphics[scale=0.3]{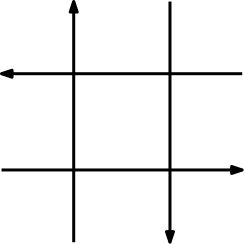}~~~~~~~~~~~~ \\
 Case 1   &    Case 2   \\
\end{tabular}
\end{center}

Suppose, for a contradiction,  that Case 1 occurs in a crossed m-diagram.  A simple analysis of the cases in Lemma~\ref{lem:crossing-arcs} shows that if three arcs intersect each other pairwise, then two of the arcs must be crossed arcs which are reflections of each other, and the other must be an uncrossed arc of the opposite type.
%
There are two configurations where an uncrossed arc intersects two such crossed arcs: 
\begin{center}
	\begin{tabular}{cc}
	  ~~~~ \includegraphics[scale=0.27]{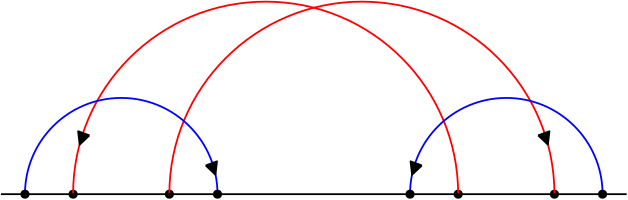}   ~~~~  &  ~~~~	 \includegraphics[scale=0.27]{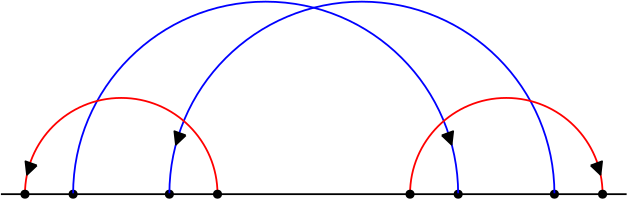}   ~~~~ \\ when the crossed arcs are first arcs   &  when the crossed arcs are second arcs   
	\end{tabular}	
\end{center}
Neither of the two configurations matches Case 1.  Therefore, Case 1 cannot occur in a crossed m-diagram. 

Suppose, for a contradiction, that Case 2 occurs in a crossed m-diagram.  Since we already know that ordinary m-diagrams cannot resolve to include a face with $4$ sides, one of the arcs must be a crossed arc.  We label the arcs as follows.  
 \begin{center}
\includegraphics[scale=0.3]{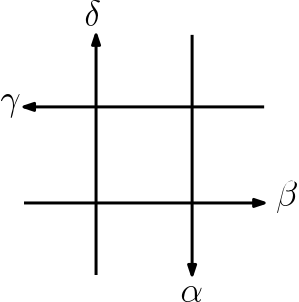} 
\end{center}
Without loss of generality, suppose $\alpha $ is a crossed arc. If $ \beta$  is a crossed arc, then $\beta$ is $\alpha$'s reflection. Then both $\gamma$ and $\delta$ are uncrossed arcs   of the same type. But this is impossible,  as $ \gamma$ and $\delta$ intersect.  It follows that $\beta$ is an uncrossed arc, and so is $\gamma$ for the same reason.  However, one directs toward $\alpha$ and the other directs away from $\alpha$, contradicting   Lemma~\ref{lem:crossing-arcs}. Therefore, Case 2 cannot occur in a crossed m-diagram. 
 \end{proof}


Suppose that $\web$ is obtained by resolving a 
generalized m-diagram $\mdiag$.  Although $\mdiag$ itself is not
necessarily planar,
there is an associated planar graph, obtained by replacing each crossing
with a vertex.  We will refer to the faces of this associated graph as 
faces of $\mdiag$.  The resolution process gives a canonical one-to-one 
correspondence between the faces of $\mdiag$ and the faces of $\web$.
We will use this correspondence implicitly in our notation: 
if $X$ is a face of
$\web$, we will also refer to the corresponding face of $\mdiag$ as $X$,
and vice-versa.  

For any face $X$ of $\mdiag$, define $\arcset(X)$ to be the set of arcs
that are above $X$.  If $X$ and $Y$ are two faces, define
\[
   \arcset(X,Y) = \arcset(X) \symmdiff \arcset(Y)
\,,
\]
the symmetric difference of the two sets of arcs.  Thus $\arcset(X,Y)$ is 
the set of arcs that separate $X$ and $Y$.  We define the \emph{arc-distance}
between $X$ and $Y$ to be
\[
   \arcdist(X,Y) = |\arcset(X,Y)|
\,.
\]
This is also the distance between faces $X$ and $Y$ in the planar graph 
associated to $\mdiag$.

Note that adjacency of faces is different for $\mdiag$ and 
$\web$: if $X$ and $Y$ are adjacent faces in $\mdiag$,
then $X$ and $Y$ are also adjacent in $\web$, but the converse is not true.
Thus, in general, $\arcdist(X,Y) \geq \webdist(X,Y)$.

Suppose $\alpha = (a,b)$ and $\beta = (c,d)$ are two arcs of $\mdiag$
that intersect.
Then $\alpha$ and $\beta$ divide the half-plane above the boundary line into 
four regions (three if the intersection point is on the boundary line).
Denote these regions $R_{ac}$, $R_{ad}$, $R_{bc}$ and $R_{bd}$,
where $R_{kl}$ is the region incident with vertices $k$ and $l$.
Note that if the 
intersection
of $\alpha$ and $\beta$ is resolved, then
regions $R_{ad}$ and $R_{bc}$ become adjacent, whereas 
$R_{ac}$ and $R_{bd}$ remain non-adjacent.
Every face of $\mdiag$ is a subset of one of these four regions.  
Suppose $X$ and $Y$ are two faces of $\mdiag$.  We
will say that $\{\alpha, \beta\}$ \emph{coherently separates} $X$ and $Y$ if
$X \subset R_{ad}$ and $Y \subset R_{bc}$, or $X \subset R_{bc}$ and
$Y \subset R_{ac}$.  We write $\cosep(X,Y)$ for the set of all pairs 
of arcs that coherently separate $X$ and $Y$.

\begin{lemma} 
\label{lem:webdist-bounds}
Let $X, Y$ be faces in a web $\web$, which is the resolution
of a generalized m-diagram $\mdiag$.  Then 
\[
   \webdist(X,Y) \geq \arcdist(X,Y) - |\cosep(X,Y)|
\,.
\]
\end{lemma}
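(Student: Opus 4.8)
The plan is to track what happens to the separating arcs as we pass from $\mdiag$ to its resolution $\web$. Recall that $\arcdist(X,Y) = |\arcset(X,Y)|$ counts the arcs of $\mdiag$ separating $X$ and $Y$, and that this equals the distance between $X$ and $Y$ in the planar graph $\mdiag^{\mathrm{pl}}$ obtained by replacing each crossing with a vertex. The resolution process only changes the picture locally at each crossing: a crossing of arcs $\alpha,\beta$ is replaced by a small bigon, so the faces $R_{ad}$ and $R_{bc}$ (in the notation set up just before the lemma) become adjacent in $\web$, while $R_{ac}$ and $R_{bd}$ do not. Everything else about incidences is preserved. So the only way $\webdist$ can be strictly smaller than $\arcdist$ is that a shortest path in $\web$ uses some of these new adjacencies created by resolving crossings.

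First I would fix a shortest path $\gamma$ from $X$ to $Y$ in $\web$ and lift it to a walk in the planar graph associated to $\mdiag$: each step of $\gamma$ is either a step across an arc edge of $\mdiag$ (crossing one arc), or a step across an intersection edge, which corresponds to moving between the two regions $R_{ad}, R_{bc}$ of some resolved crossing $\{\alpha,\beta\}$. For each intersection-edge step, I would replace it by the two-step detour through the crossing vertex of $\mdiag^{\mathrm{pl}}$, i.e. crossing $\alpha$ then $\beta$ (or $\beta$ then $\alpha$). This produces a walk $\widetilde\gamma$ in $\mdiag^{\mathrm{pl}}$ from $X$ to $Y$ whose length is $\webdist(X,Y) + (\text{number of intersection-edge steps of }\gamma)$. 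Next I would argue that each pair $\{\alpha,\beta\}$ used at an intersection-edge step must coherently separate $X$ and $Y$: traversing the intersection edge moves from $R_{ad}$ to $R_{bc}$ (or vice versa), and since $\gamma$ goes from $X$ to $Y$, the net effect of all such crossings, together with the non-crossing behaviour of the arc edges, forces $X$ and $Y$ to sit on the appropriate sides — precisely the definition of $\{\alpha,\beta\} \in \cosep(X,Y)$. Moreover distinct intersection-edge steps of a shortest path should use distinct crossings (a shortest path will not cross the same bigon twice), so the number of intersection-edge steps is at most $|\cosep(X,Y)|$.

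Finally, since $\widetilde\gamma$ is a walk in $\mdiag^{\mathrm{pl}}$ from $X$ to $Y$ and $\arcdist(X,Y)$ is the graph distance there, we get
\[
   \arcdist(X,Y) \;\leq\; \mathrm{length}(\widetilde\gamma) \;=\; \webdist(X,Y) + (\text{\# intersection-edge steps}) \;\leq\; \webdist(X,Y) + |\cosep(X,Y)|,
\]
which rearranges to the claimed inequality. The main obstacle I anticipate is the bookkeeping in the middle step: carefully justifying that every crossing used by an intersection-edge step genuinely lies in $\cosep(X,Y)$ (as opposed to merely $\arcset(X,Y)$ or neither), and that a geodesic does not reuse a crossing, so that the count of such steps is bounded by $|\cosep(X,Y)|$ rather than something larger. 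This requires a clean description of how faces of $\mdiag$ are distributed among the four regions $R_{ac},R_{ad},R_{bc},R_{bd}$ of each crossing and how an edge-walk in $\web$ interacts with that decomposition; once that local picture is pinned down, the global inequality follows by summing.
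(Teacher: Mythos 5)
Your reduction to the planar graph of $\mdiag$ is sound: replacing each intersection-edge step of a geodesic by a two-step detour around the crossing vertex does give $\arcdist(X,Y)\leq \webdist(X,Y)+(\#\text{ intersection-edge steps})$. The gap is the next link in your chain: it is simply not true that every intersection edge traversed by the chosen shortest path has its crossing pair in $\cosep(X,Y)$, so the bound $\#\text{ intersection-edge steps}\leq|\cosep(X,Y)|$ fails. A pair can coherently separate $X$ and $Y$ only if \emph{both} of its arcs lie in $\arcset(X,Y)$, but a geodesic may happily traverse an intersection edge one of whose arcs does not separate $X$ from $Y$ (or has already been crossed), because crossing such an arc costs nothing when it is bundled with a needed arc. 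Concretely, let $\alpha$ cross two disjoint arcs $\beta_1,\beta_2$ (say $\beta_1$ spanning $1$--$6$, $\alpha$ spanning $5$--$8$, $\beta_2$ spanning $7$--$12$; one arc crossing two disjoint arcs of the other type occurs inside genuine m-diagrams). Take $X$ to be the face inside both $\alpha$ and $\beta_1$, $Y$ the face inside both $\alpha$ and $\beta_2$, and $O$ the face outside all three arcs. In the resolution, $X\to O\to Y$ is a shortest path ($\webdist(X,Y)=\arcdist(X,Y)=2$) consisting of two intersection-edge steps, yet $\cosep(X,Y)=\emptyset$ because $\alpha\notin\arcset(X,Y)$; your middle inequality would require $2\leq 0$. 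The lemma itself still holds here, but not via your chain, and passing to a better-behaved geodesic is a repair you have neither made nor justified.

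The paper's proof follows the same shortest path but does the accounting on arcs rather than on steps: at step $i$ it records $S_i$, the arcs of $\arcset(X,Y)$ crossed at that step \emph{for the first time}, notes $|S_i|\leq 2$, and claims the discount only for steps with $|S_i|=2$. For such a step, the first-time condition forces all earlier faces, hence $X$, to lie in one of the two opposite quadrant regions of the crossing, and the fact that both arcs separate $X$ from $Y$ then forces $Y$ into the opposite quadrant --- which is exactly coherent separation; steps like the two in the example above have $|S_i|\leq 1$ and are paid at face value. That extra conditioning (both arcs separating and not previously crossed) is precisely what your argument is missing; adding it turns your outline into the paper's proof.
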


\begin{proof}
%
Suppose $X = X_0, X_1, X_2, \dots, X_\ell = Y$ is a shortest path 
from $X$ to $Y$ in $\web$.  Let $S = \arcset(X,Y)$.  For $i=1 \dots, \ell$, put
\[
  S_i = \big(S \cap \arcset(X_{i-1},X_i)\big ) \setminus \big (\textstyle \bigcup_{j=1}^{i-1} S_j \big)
\,.
\]
Informally, $S_i$ is the set of arcs separating $X$ and $Y$, which are 
crossed at the $i$-th step in the path, and have not been crossed previously.
Note that since $S = \arcset(X_0, X_1) \symmdiff \dotsb \symmdiff \arcset(X_{\ell-1}, X_\ell)$, we have
$S \subseteq \bigcup_{i=1}^\ell \arcset(X_{i-1}, X_i)$.
By construction the sets $S_1, \dots, S_\ell$ are disjoint and contained
in $S$.  Therefore $S = S_1 \sqcup S_2 \sqcup \dots \sqcup S_\ell$.

Since $X_{i-1}$ and $X_i$ are adjacent faces in $\web$,
$|S_i| \leq |\arcset(X_{i-1},X_i)| \leq 2$.  Moreover, 
we claim that if $|S_i| = 2$, then $S_i \in \cosep(X,Y)$.
To see this suppose $S_i = \{\alpha, \beta\}$, 
$\alpha = (a,b)$, $\beta = (c,d)$.  Since $X_i$ and $X_{i-1}$
are adjacent in $\web$ then the 
intersection
of $\alpha$, $\beta$ resolves
so that the intersection edge is incident with $X_i$ and $X_{i-1}$. Thus,
$S_i \in \cosep(X_{i-1},X_i)$.
Without loss of generality, assume $X_{i-1} \subset R_{ad}$ and 
$X_i \subset R_{bc}$.  By definition of $S_i$,
$\alpha$ and $\beta$ are not in $\arcset(X_{j-1}, X_j)$ for any $j <i$.
This means
$X = X_0, X_1, \dots, X_{i-1}$ are all contained in $R_{ad}$.
Since $\alpha$ and $\beta$ are both in $\arcset(X,Y)$, $Y$ is contained
in the opposite region: $Y \subset R_{bc}$, which proves the claim.

Since $|S| = \arcdist(X,Y)$ and  $\#\{i \mid |S_i| = 2\} \leq |\cosep(X,Y)|$, 
we have
\begin{align*}
\webdist(X,Y) = \ell &= |S| + \#\{i \mid |S_i| = 0\} - \#\{i \mid |S_i| = 2\} \\
&\geq \arcdist(X,Y) + 0 - |\cosep(X,Y)|
\,. 
\qedhere
\end{align*}
\end{proof}

We now apply this lemma to the crossed m-diagram $\mdiagcross_\domtab$.  For a face
$X$ of $\mdiagcross_\domtab$, we will say that $X$ is \emph{between a vertical pair}
if there exists a vertical pair $(k_1, k_2) \in V$ 
such that exactly one of $(k_1', k_2)$ and $(k_1,k_2')$ is in $\arcset(X)$.
Let 
\[
   \epsilon(X) = \begin{cases}
 1&\quad\text{if $X$ is between a vertical pair}  \\
 0 &\quad\text{otherwise.}
\end{cases}
\]

\begin{lemma}
\label{lem:symmetrical-distance-1}
Assume that $\mdiagcross_\domtab$ is symmetrical. 
Let $X$ be a face of $\mdiagcross_\domtab$ and let $X'$ be its reflection.  
Then 
\[ 
   \webdist(X,X') = \arcdist(X,X') - \epsilon(X)
\,.
\]
\end{lemma}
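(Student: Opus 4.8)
The plan is to establish the two inequalities $\webdist(X,X') \ge \arcdist(X,X') - \epsilon(X)$ and $\webdist(X,X') \le \arcdist(X,X') - \epsilon(X)$ separately. The lower bound is the easy direction: by Lemma~\ref{lem:webdist-bounds} we have $\webdist(X,X') \ge \arcdist(X,X') - |\cosep(X,X')|$, so it suffices to show that $|\cosep(X,X')| \le \epsilon(X)$, i.e. that at most one pair of arcs coherently separates $X$ from its reflection, and that this can only happen when $X$ is between a vertical pair. A pair $\{\alpha,\beta\}$ in $\cosep(X,X')$ must in particular be a pair of intersecting arcs with $X$ and $X'$ on opposite sides; since $X$ and $X'$ are mirror images, I expect to show using Lemma~\ref{lem:crossing-arcs} that the only intersecting pairs that can separate a face from its mirror image in this coherent way are the reflection pairs $\{(k_1',k_2),(k_1,k_2')\}$ coming from a single vertical domino pair (case (b) of Lemma~\ref{lem:crossing-arcs}), and that at most one such pair can do so. This matches exactly the definition of $\epsilon(X)$: $X$ is between a vertical pair precisely when exactly one of $(k_1',k_2)$, $(k_1,k_2')$ is above $X$, which is the condition for $\{(k_1',k_2),(k_1,k_2')\}$ to separate $X$ from $X'$.

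For the upper bound, I would construct an explicit path from $X$ to $X'$ in $\webcross_\domtab$ of length $\arcdist(X,X') - \epsilon(X)$. The natural idea is to cross, one at a time, the arcs in $\arcset(X,X')$ that separate $X$ from $X'$, organizing the crossings so that whenever two such arcs form a coherently-separating pair (which by the analysis above happens at most once, and only for a reflection pair from a vertical domino) they are crossed simultaneously at the single intersection edge created by resolving that crossing. Concretely, I would order the arcs of $\arcset(X,X')$ by the partial order "above", walk from $X$ outward crossing arcs that separate $X$ from $X'$, and observe that each crossing either moves through an arc edge (one step per arc) or, in the one exceptional reflection-pair case, moves through the resolved intersection edge, disposing of two separating arcs in a single step. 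This produces a path of length $|\arcset(X,X')| - \epsilon(X) = \arcdist(X,X') - \epsilon(X)$, giving $\webdist(X,X') \le \arcdist(X,X') - \epsilon(X)$.

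Combining the two bounds yields the equality. The main obstacle I anticipate is the upper bound: one must verify that the greedy arc-crossing path is actually a valid path in the resolved web $\webcross_\domtab$ — that consecutive faces along it really are adjacent in the web and not merely in some abstract sense — and in particular that when $X$ is not between any vertical pair, no "shortcut" through an intersection edge is available, so that the path genuinely has length $\arcdist(X,X')$ and not less, while when $X$ is between a vertical pair exactly one shortcut is available and is used. This requires using the symmetry of $\mdiagcross_\domtab$ together with the structural restrictions on intersecting arcs from Lemmas~\ref{lemma:max-arc} and~\ref{lem:crossing-arcs} to pin down precisely which intersection edges lie between $X$ and $X'$. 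A secondary check is that $\arcset(X)$ and $\arcset(X') = \arcset(X)'$ (the reflected set) differ in a way compatible with the claimed count, which follows from the symmetry assumption on $\mdiagcross_\domtab$.
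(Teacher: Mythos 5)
Your outline follows the same route as the paper: the lower bound comes from Lemma~\ref{lem:webdist-bounds} once one shows, via the case analysis of Lemma~\ref{lem:crossing-arcs}, that $\cosep(X,X')$ is empty unless $X$ is between a vertical pair, and in that case consists of the single reflection pair $\{(k_1,k_2'),(k_1',k_2)\}$; the upper bound is obtained by routing through the intersection edge created by resolving that pair. One small correction: when $X$ is \emph{not} between a vertical pair there is nothing to construct for the upper bound, since $\webdist\le\arcdist$ holds for any two faces (resolution only adds adjacencies); your concern about verifying that ``no shortcut is available'' is misplaced, because a shorter path could only threaten the lower bound, which Lemma~\ref{lem:webdist-bounds} already handles.

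The genuine gap is in the other case. To claim your path has length $\arcdist(X,X')-1$ when $X$ is between the vertical pair $(k_1,k_2)$, you need exactly the counting fact you defer to an ``anticipated obstacle''. Let $Y$ be the face of $R_{k_1k_2}$ incident with the crossing of $(k_1,k_2')$ and $(k_1',k_2)$, and $Y'$ its reflection; the route $X\to Y\to Y'\to X'$ has length at most $\arcdist(X,Y)+1+\arcdist(X',Y')$, and to conclude that this is at most $\arcdist(X,X')-1$ you must establish
\[
\arcset(X,X') \;=\; \arcset(X,Y)\,\sqcup\,\arcset(X',Y')\,\sqcup\,\bigl\{(k_1,k_2'),(k_1',k_2)\bigr\}
\,,
\]
i.e.\ that these three sets are pairwise disjoint (so that $\arcdist(X,X')=2\,\arcdist(X,Y)+2$). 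The paper proves the disjointness by noting that an arc in $\arcset(X,Y)\cap\arcset(X',Y')$ would have to pass through both $R_{k_1k_2}$ and $R_{k_1'k_2'}$, hence be a crossed arc meeting both $(k_1,k_2')$ and $(k_1',k_2)$, which Lemma~\ref{lem:crossing-arcs} forbids (a crossed arc meets only its own reflection). Without this step, your greedy path is not guaranteed to dispose of one separating arc per step with the single two-for-one saving, and the bound $\arcdist(X,X')-1$ does not follow. Supplying this decomposition is the only substantive content of the upper bound, and with it your plan coincides with the paper's proof.
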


\begin{proof}
Suppose $X$ is not between
a vertical pair. By considering each of the cases in 
Lemma~\ref{lem:crossing-arcs} we see that $\cosep(X,X') = \emptyset$.
By Lemma~\ref{lem:webdist-bounds}, $\webdist(X,X') = \arcdist(X,X')$.

Now suppose $X$ is between
the vertical pair $(k_1,k_2) \in V$. 
In this case, Lemma~\ref{lem:crossing-arcs} 
implies that $\cosep(X,X') = \big\{\{(k_1,k_2'), (k_1',k_2)\}\big\}$.
Without loss of generality, assume that $X \in R_{k_1k_2}$, and
$X' \in R_{k_1'k_2'}$.  Let $Y$ be the unique face in $R_{k_1k_2}$
incident with the intersection point of the arcs $(k_1,k_2')$ 
and $(k_1',k_2)$, and let $Y'$ be its reflection.  We claim that
\[
 \arcset(X,X') = \arcset(X,Y) \sqcup \arcset(X',Y') \sqcup \arcset(Y,Y')
\,.
\]
First, note that $\arcset(Y,Y') = \{(k_1,k_2'), (k_1',k_2)\}$.
By the construction of $Y$, each arc in $\arcset(Y,Y')$ is either in both 
$\arcset(X)$ and $\arcset(Y)$, or neither.  Thus $\arcset(Y,Y')$
is disjoint from $\arcset(X,Y)$, and similarly, $\arcset(Y,Y')$ is disjoint 
from $\arcset(X',Y')$.
Now suppose there is an arc $\gamma \in \arcset(X,Y) \cap \arcset(X', Y')$.
Since $\gamma$ separates $X$ and $Y$, it must pass through $R_{k_1k_2}$, 
and since $\gamma$ separates $X'$ and $Y'$ it must pass through 
$R_{k_1'k_2'}$.  Thus $\gamma$ must be a crossed arc, which intersects both 
$(k_1,k_2')$ and $(k_1',k_2)$.  But this is impossible by 
Lemma~\ref{lem:crossing-arcs},
so $\arcset(X,Y)$ and $\arcset(X',Y')$ are disjoint as well.  The
claim now follows, by noting that
\[
 \arcset(X,X') = \arcset(X,Y) \symmdiff \arcset(X',Y') \symmdiff \arcset(Y,Y')
\,.
\]
We conclude that $\arcdist(X,X') = 2\,\arcdist(X,Y) + 2$, and
\begin{align*}
  \webdist(X,X') 
  &\leq \webdist(X,Y) + \webdist(Y,Y') + \webdist(Y',X') \\
  &\leq 2\,\arcdist(X,Y) +1 \\
  &=  \arcdist(X,X') -1 \,.
\end{align*}
Since $|\cosep(X,X')|= 1$, the reverse inequality follows from 
Lemma~\ref{lem:webdist-bounds}. 
\end{proof}

We now focus on distances between boundary faces, as these are the
quantities which appear in 
Algorithms~\ref{alg:tw1} and \ref{alg:fw1}.

Let $B_0, B_1, \dots, B_M$ denote 
the boundary faces of the web $\web_C$,
as described in Section~\ref{sec:webs}.
Here, $B_0 = B_M$ is the outer face, and $M = \frac{N}{2}$.

Since the vertices of $\mdiagcross_\domtab$ are labelled 
$M', \dots ,1' ,1, \dots, M$ instead of
$1, \dots, N$, we will adjust our notation for its boundary faces, as follows.
Let $A_k$ be the boundary face of $\mdiagcross_\domtab$ incident with vertices
$k$ and $k+1$; let $A_{k'}$ be the boundary face incident with
$k'$ and $(k+1)'$; let $A_0 = A_{0'}$ be the boundary face incident
with $1$ and $1'$; $A_M = A_{M'}$ is the outer
face.

\begin{lemma}
\label{lem:symmetrical-distance-2}
For $k \in \{0, \dots, M\}$, we have
\[
   \arcdist(A_k, A_{k'}) = 2\,\webdist(B_k, B_0)
\,.
\]
\end{lemma}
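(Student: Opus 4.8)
The plan is to compare the arc structure of the crossed m-diagram $\mdiagcross_\domtab$ with that of the ordinary m-diagram $\mdiag_C$ of the compression $C$, and to recall that in an ordinary m-diagram, arc-distance between boundary faces equals web-distance (since resolving an ordinary m-diagram never creates a $4$-gon, so $\cosep$ between boundary faces is empty and Lemma~\ref{lem:webdist-bounds} applies in both directions). So the crux is the purely combinatorial identity $\arcdist_{\mdiagcross_\domtab}(A_k, A_{k'}) = 2\,\arcdist_{\mdiag_C}(B_k, B_0)$, after which the stated formula follows by applying the ordinary-case equality $\arcdist_{\mdiag_C}(B_k,B_0) = \webdist(B_k,B_0)$.

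First I would make precise what $\arcset(A_k)$ looks like in $\mdiagcross_\domtab$. The boundary face $A_k$ (for $1 \le k \le M-1$) sits above vertices $k, k+1$ on the right half, so the arcs above it are exactly the arcs of $\mdiagcross_\domtab$ that separate $A_k$ from the outer face $A_M$. I would split these into: (i) right-half pieces of uncrossed arcs $(l_1, l_2)$ with $l_1 \le k < l_2$ — these are in bijection with arcs of $\mdiag_C$ above $B_k$; (ii) left-half pieces $(l_1', l_2')$ — but since $A_k$ lies on the right half and these arcs are entirely over the left half (for uncrossed arcs), none of these is above $A_k$; (iii) crossed arcs $(k_1, k_2')$ and $(k_1', k_2)$ coming from vertical pairs $(k_1,k_2) \in V$. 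Here I need to count carefully: a crossed arc $(k_1, k_2')$ spans from a right vertex to a left vertex, so it is above $A_k$ iff $k$ lies strictly between... I would check that for each vertical pair $(k_1,k_2)$, the number of its crossed arcs above $A_k$ equals $2$ times the indicator that the uncrossed arc $(k_1,k_2)$ (equivalently $\{k_1,k_2\}$) is above $B_k$ in $\mdiag_C$ — i.e. that $\min\{k_1,k_2\} \le k < \max\{k_1,k_2\}$. The point is that when $B_k$ is below the uncrossed arc $(k_1,k_2)$, it is below \emph{both} crossed arcs $(k_1,k_2')$ and $(k_1',k_2)$ (each of which "wraps around" over the top), contributing $2$; when $B_k$ is not below $(k_1,k_2)$, it is below neither crossed arc, contributing $0$.

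Assembling these counts: $\arcdist(A_k, A_{k'}) = |\arcset(A_k, A_{k'})| = |\arcset(A_k)| + |\arcset(A_{k'})|$ (the two sets are disjoint since $A_k$ is over the right half and $A_{k'}$ over the left), and by the reflection symmetry of $\mdiagcross_\domtab$, $|\arcset(A_{k'})| = |\arcset(A_k)|$, so $\arcdist(A_k, A_{k'}) = 2|\arcset(A_k)|$. From the case analysis, $|\arcset(A_k)| = |\{$first/second arcs of $\mdiag_C$ above $B_k\}| = \arcdist(B_k, B_0) = \webdist(B_k, B_0)$, the last equality because $\mdiag_C$ is an ordinary m-diagram whose resolution $\web_C$ is a $3$-web and $\cosep(B_k,B_0) = \emptyset$ there (no two arcs of an ordinary m-diagram coherently separate two boundary faces — any coherent separation would produce a $4$-gon on resolution, which ordinary m-diagrams do not). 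This gives the claim for $1 \le k \le M-1$; the boundary cases $k=0$ and $k=M$ are trivial since both sides are $0$.

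The main obstacle I anticipate is the bookkeeping in step two: correctly verifying that $B_k$ below the uncrossed arc $(k_1, k_2)$ forces $A_k$ below \emph{both} associated crossed arcs, and $B_k$ above (i.e. not below) $(k_1,k_2)$ forces $A_k$ below neither. This requires being careful about the geometry of how $(k_1, k_2')$ and $(k_1', k_2)$ are drawn — in particular that a crossed arc, going from a right-half vertex over the top to a left-half vertex, is "above" precisely those right-half boundary faces $A_j$ with $\min\{k_1,k_2\} \le j < \max\{k_1,k_2\}$, using the nesting structure guaranteed by Lemma~\ref{lemma:max-arc} (that $(k_1,k_2)$ is a maximal arc of its type, so no arc of the same type nests around it and the crossed arcs can be drawn without interference). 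Once that local picture is pinned down, the rest is routine counting.
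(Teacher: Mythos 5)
Your overall skeleton (reduce to an arc-count over $B_k$ in $\mdiag_\compresstab$, then use the ordinary-m-diagram equality $\arcdist(B_k,B_0)=\webdist(B_k,B_0)$) matches the paper, but the central counting step is wrong, and the two intermediate claims you build on it are false. A crossed arc is an ordinary arc in the upper half-plane, so it lies above exactly the boundary faces strictly between its endpoints; it does not ``wrap around over the top.'' Concretely, for a vertical pair $(k_1,k_2)\in V$ and a right-half face $A_k$, the arc $(k_1,k_2')$ is above $A_k$ iff $k<k_1$ and $(k_1',k_2)$ is above $A_k$ iff $k<k_2$. Hence the pair contributes \emph{two} arcs to $\arcset(A_k)$ when $k<\min\{k_1,k_2\}$, exactly \emph{one} when $\min\{k_1,k_2\}\le k<\max\{k_1,k_2\}$ (this is precisely the paper's ``between a vertical pair'' condition), and none when $k\ge\max\{k_1,k_2\}$ --- the opposite of your claimed $2$/$0$ dichotomy. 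Consequently $\arcset(A_k)$ and $\arcset(A_{k'})$ are \emph{not} disjoint (every crossed arc coming from a pair lying entirely to the right of $k$ is above both faces), and $|\arcset(A_k)|\ne|\arcset(B_k)|$ in general: one has $|\arcset(A_k)|=|\arcset(B_k)|+2\,\#\{(k_1,k_2)\in V: k<\min\{k_1,k_2\}\}$. In the paper's running example with $V=\{(3,4),(9,8)\}$ and $k=1$, all four crossed arcs are above both $A_1$ and $A_{1'}$, so $|\arcset(A_1)|=5$ and your formula $2|\arcset(A_1)|$ gives $10$, whereas $\arcdist(A_1,A_{1'})=2=2\,\webdist(B_1,B_0)$.

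The repair is to work with the set difference rather than with $\arcset(A_k)$ itself, which is what the paper does: crossed arcs above both $A_k$ and $A_{k'}$ cancel in $\arcset(A_k,A_{k'})$, and when $A_k$ is between the pair $(k_1,k_2)$ the single crossed arc above $A_k$ but not $A_{k'}$ exactly replaces the arc $(k_1,k_2)\in\arcset(B_k)$, giving $|\arcset(A_k)\setminus\arcset(A_{k'})|=|\arcset(B_k)|$; by symmetry the same holds for the other difference, so $\arcdist(A_k,A_{k'})=2|\arcset(B_k)|=2\,\arcdist(B_k,B_0)$. For the last step the paper simply cites Tymoczko's Lemma~4.5 for $\webdist(B_k,B_0)=\arcdist(B_k,B_0)$; your alternative justification via Lemma~\ref{lem:webdist-bounds} needs $\cosep(B_k,B_0)=\emptyset$, which is true, but not for the reason you give --- coherent separation is about which of the four regions at a crossing the two faces occupy (the outer face always lies in the region outside both arcs, never in one of the two side regions), and has nothing to do with creating $4$-gons in the resolution.
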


\begin{proof}
From \cite[Lemma 4.5]{tymoczko}, 
$ \webdist ( B_k , B_0) = \arcdist (B_k, B_0)$. 
Moreover, since $B_0$ is the outer face of $\mdiag_C$, 
$\arcdist (B_k, B_0) = |\arcset(B_k)|$. 

If $A_k$ is not between a vertical pair, 
then $\arcset(A_k) \setminus \arcset(A_{k'}) = \arcset(B_k)$.
If $A_k$ is between vertical pair $(k_1, k_2)$,
then $(k_1, k_2) \in \arcset(B_k)$ and
\[
\arcset(A_k) \setminus \arcset(A_{k'}) = 
\begin{cases} 
\arcset(B_k) \cup \{(k_1', k_2)\} \setminus \{(k_1, k_2)\} 
&\text{if }k_1 < k_2 \\
\arcset(B_k) \cup \{(k_1, k_2')\} \setminus \{(k_1, k_2)\} 
&\text{if }k_1 > k_2. \\
\end{cases}
\]
In either case 
$|\arcset(A_k) \setminus \arcset(A_{k'})| = |\arcset(B_k)|$.
By a symmetrical argument, $|\arcset(A_{k'}) \setminus \arcset(A_k)| 
= |\arcset(B_{k'})|$.  Thus
\[
   |\arcset(A_k, A_{k'}) | = 
|\arcset(A_k) \setminus \arcset(A_{k'})|
+
|\arcset(A_{k'}) \setminus \arcset(A_k)| 
= 2 |\arcset(B_k)|
\]
as required.
\end{proof}

Finally we are ready to complete the proof of Theorem~\ref{thm:fw2}.

Let $\web = \webcross_\domtab$.  Since $\web$ is a $3$-web,
by Proposition~\ref{prop:independent-of-drawing}, we may assume 
that $\mdiagcross_\domtab$ is drawn symmetrically.
Hence $\web$ may also be assumed to be 
symmetrical.  Therefore $\web$ has an associated domino tableau
$D_\web$, defined by Algorithm~\ref{alg:fw1}.  By Theorem~\ref{thm:fw1} it suffices to show that $D = D_{\web}$.

To this end, we compare the row-index words.
Let $u = u_1 u_2 \dots u_N = w(D)$, and
let $w = w_1 w_2 \dots w_N = w(D_{\web})$.
In addition, let $v = v_1 v_2 \dots v_M = w(C)$.
Our goal is to show that $u=w$.  We will do so by showing that $u$ and
$w$ are related to $v$ in the same way.

For $j \in \{0, \dots, M\}$, 
let $d_j = \webdist(B_j, B_0)$ be the distances in $\web_C$,
defined in Algorithm~\ref{alg:tw1}.
Let $h_j$ be distances in $\web$,
defined in Algorithm~\ref{alg:fw1}.  After adjusting for 
the different vertex-labelling in $\mdiagcross_\domtab$, 
the definition is $h_j = \webdist(A_{M-j}, A_{(M-j)'})$.

Now fix $j$, and let $k = M+1-j$.  Then by 
Lemmas~\ref{lem:symmetrical-distance-1} 
and~\ref{lem:symmetrical-distance-2}, 
\begin{align*}
    h_j 
    &= \webdist(A_{M-j}, A_{(M-j)'}) \\
    &= \arcdist(A_{M-j}, A_{(M-j)'}) - \epsilon(A_{(M-j)})\\
    &= 2\,\webdist(B_{M-j}, B_0) - \epsilon(A_{(M-j)})\\
   &= 2d_{k-1} - \epsilon(A_{k-1}).
\end{align*}
Let $\delta_k = \epsilon(A_k) - \epsilon(A_{k-1})$.  
From 
Algorithm~\ref{alg:tw1}, $v_k = \Phi(d_{k-1} - d_k)$. From the definition
of $w$ in
Algorithm~\ref{alg:fw1} and the preceding calculations, we have
\begin{align*}
   (w_{\compl{2j}}, w_{\compl{2j}+1}) 
   &= \Lambda(h_j - h_{j-1})  \\
   &= \Lambda(2(d_{k-1} - d_k) - \epsilon(A_{k-1}) + \epsilon(A_k)) \\ 
   &= \Lambda(2 \Phi^{-1}(v_k) + \delta_k)
\,.
\end{align*}

Observe that $\delta_k = 0$,
unless one of $A_k$ and $A_{k-1}$ is between a vertical pair and the other 
is not --- this occurs if and only if $k$ part of a vertical pair.
More precisely,
\[
   \delta_k =
\begin{cases}
 0 & \quad \text{if $k$ is a horizontal domino in $\rdomtab$}  \\
 1 & \quad \text{if $k$ is a first vertical domino in $\rdomtab$} \\
 -1 & \quad \text{if $k$ is a second vertical domino in $\rdomtab$.} \\
\end{cases}
\]
The domino labelled by $k$ in $\rdomtab$ corresponds to
the domino labelled by the pair $(\compl{2j}, \compl{2j}+1)$ in $D$.
Thus, $(u_{\compl{2j}}, u_{\compl{2j}+1})$ tells us which rows
contain this domino.  We consider the various cases.

\begin{itemize}
\item  If $(u_{\compl{2j}}, u_{\compl{2j}+1}) = (r,r)$, then $k$ is
a horizontal domino in row $r$ of $\rdomtab$.  Therefore $k$ is also 
in row $r$ of $C$.  Hence, in this case, $v_k = r$ and  $\delta_k = 0$.
\item If $(u_{\compl{2j}}, u_{\compl{2j}+1}) = (r,r+1)$, then $k$ is a
vertical domino in rows $r$ and $r+1$ of $\rdomtab$.  
\begin{itemize}
\item If $k$ is a first vertical
domino, then $k$ is in row $r$ of $C$; hence $v_k=r$ and $\delta_k = 1$.
\item
If $k$ is a second vertical
domino, then $k$ is in row $r+1$ of $C$; hence $v_k=r+1$ and $\delta_k = -1$.
\end{itemize}
\end{itemize}
In each case, we can verify directly that we have the relation
\[
   (u_{\compl{2j}}, u_{\compl{2j}+1}) 
   = \Lambda(2 \Phi^{-1}(v_k) + \delta_k)
\,.
\]
Thus
$(u_{\compl{2j}}, u_{\compl{2j}+1}) =  (w_{\compl{2j}}, w_{\compl{2j}+1})$, 
as required.
\qed

\section{Concluding remarks}

In the introduction, we claimed that if $\web$ is a symmetrical $3$-web, 
then $\fold(\web)$, the web of $F(T_\web)$,
does not look like a ``doubled version'' of the right half $\web$.  
This is a somewhat subjective
claim, but there is content to it.  One can try to establish a
relationship by drawing $\fold(\web)$ with its arcs and vertices within
some small distance $\varepsilon$ of the arcs and vertices on the
right half of $\web$.  
It is always possible to do this, but when one does, no obvious pattern 
emerges.  
Even in small examples one can see that some of the arcs 
of $\web$ have several arcs of $\fold(\web)$
nearby, while others have only one.  It is certainly nothing like a
two-to-one mapping.  However, it is still conceivable that there 
is a less obvious pattern.

Webs originated in representation theory, as a way to produce bases for
invariant subspaces of representations of 
$U_q(\mathfrak{sl}_2)$ and $U_q(\mathfrak{sl}_3)$.
This raises the question: do any of the results in this paper
have a representation theoretic interpretation?

On a more combinatorial level, the concept of a generalized m-diagram
raises some interesting questions.
Not every generalized m-diagram resolves to a $3$-web, 
satisfying conditions (1)--(4) of Definition~\ref{def:3-web}.
One might therefore hope to characterize generalized m-diagrams 
for which the resolution is a valid web.
A more refined problem: given a $3$-web $\web$, classify 
and/or enumerate generalized m-diagrams that resolve to $\web$.
There is always at least one, since the m-diagram of $T$
resolves to $\web_T$, but Theorem~\ref{thm:fw2} shows that there
may be others.
For $2$-webs, we have a nice characterization of the webs of domino
tableaux.  Is there a similar characterization of the $3$-webs of 
domino tableaux, e.g. in terms of some family of generalized m-diagrams?

There are various directions in which one might hope to generalize
this work.  Most ambitiously, one might hope for generalizations of
our main theorems
for an arbitrary web $\web$, not just for symmetrical webs.
More realistically, it may be possible to find generalizations to 
non-rectangular shapes, which may potentially involve non-rectangular
self-evacuating tableaux, rotationally symmetric skew tableaux,
and webs in which non every boundary vertex is a sink.

\begin{ack}
We thank Oliver Pechenik for helpful feedback on the first version
of this manuscript.
\end{ack}

\medskip

\bibliographystyle{plain}
\bibliography{references}


\end{document}